\documentclass[12pt]{amsart}
\usepackage[utf8]{inputenc}

\usepackage{comment}
\usepackage[english]{babel}
\usepackage[T1]{fontenc}
\usepackage{todonotes}
\usepackage{bm}
\usepackage{upgreek}

\usepackage{enumitem}
\usepackage{amssymb,amsfonts,stmaryrd,mathabx}
\usepackage{amsmath,latexsym,amsthm,mathtools,bbm}
\usepackage{pgf,tikz} 
\usepackage{subcaption}
\usepackage[colorlinks,cite color=green,link color=purple]{hyperref}
\usepackage{todonotes}
\usepackage{cases}
\usepackage[thinlines]{easytable}
\usepackage{bm}
\usepackage[export]{adjustbox}
\usepackage{appendix}

\usepackage{thmtools}
\usepackage[capitalize,noabbrev,nameinlink]{cleveref}
\newtheorem{thm}{Theorem}[section]
\newtheorem{cor}[thm]{Corollary}
\newtheorem{lem}[thm]{Lemma}
\newtheorem{prop}[thm]{Proposition}

\theoremstyle{definition}
\newtheorem{definition}[thm]{Definition}
\newtheorem{remark}[thm]{Remark}
\newtheorem{example}[thm]{Example}
\newtheorem{notation}[thm]{Notation}

\AtBeginEnvironment{appendices}{\crefalias{section}{appendix}}



\newcommand{\bfx}{\boldsymbol{x}}

\DeclareMathOperator{\wt}{wt}

\usepackage{stackengine}
\usepackage{scalerel}

\DeclareMathOperator{\skipped}{skip}
\DeclareMathOperator{\free}{free}
\DeclareMathOperator{\emp}{empty}

\DeclareMathOperator{\inc}{inc}

\DeclareMathOperator{\SSYT}{SSYT}

\DeclareMathOperator{\pair}{pair}
\DeclareMathOperator{\rev}{rev}

\newcommand{\xx}{\mathbf x}

\newcommand{\mcP}{\mathcal P}

\newcommand{\ZZ}{\mathbb Z}
\newcommand{\QQ}{\mathbb Q}

\newcommand{\VV}{V_{\lambda}^*}

\newcommand{\mcG}{\mathcal{G}}
\newcommand{\Gbar}{\widebar{\mathcal{G}}}
\newcommand{\Qbar}{\widebar{Q}}
 
\newcommand{\NN}{\mathbb N}
\newcommand{\YY}{\mathbb Y}

\newcommand{\PP}{\mathbb P}

\newcommand{\PPone}[1]{\mathbb P^{(1)}_{#1}}
\newcommand{\PPtwo}[1]{\mathbb P^{(2)}_{#1}}
\newcommand{\mfp}{\mathfrak p}
\newcommand{\mfq}{\mathfrak q}

\newcommand{\tb}{c}
\allowdisplaybreaks

\usepackage[colorlinks,cite color=green,link color=purple]{hyperref}
\definecolor{green}{RGB}{43,92,47}
\definecolor{blue}{RGB}{40,68,104}
\definecolor{red}{RGB}{254, 113, 96}
\definecolor{purple}{RGB}{102,0,51}
\definecolor{gray}{RGB}{224,224,224}
\definecolor{lightpurple}{RGB}{255, 249, 242}
\definecolor{blue}{RGB}{40,68,104}
\hypersetup{colorlinks = true, urlcolor=blue}

\author[]{}

\title[A probabilistic interpretation for interpolation Macdonald polynomials]{A probabilistic interpretation for\\ interpolation Macdonald polynomials}
\date{\today}
\author{Houcine Ben Dali}
\address{Department of Mathematics,
Harvard University, Cambridge, MA, and Center for Mathematical Sciences and Applications, Harvard University, Cambridge, MA}
\email{bendali@math.harvard.edu}
\author{Lauren Kiyomi Williams}
\address{Department of Mathematics,
Harvard University, Cambridge, MA}
\email{williams@math.harvard.edu}

\begin{document}

\begin{abstract}
Previous work of Ayyer, Martin, and Williams gave a probabilistic interpretation of the Macdonald polynomials $P_{\lambda}(x_1,\dots,x_n;1,t)$ at $q=1$ in terms of a Markov chain called the \emph{multispecies $t$-Push TASEP}, a Markov chain involving particles of types $\lambda_1,\dots,\lambda_n$ hopping around a ring.  In particular, they showed that for each composition $\eta$ obtained by permuting the parts of $\lambda$, the stationary probability of being in state $\eta$ is proportional to the ASEP polynomial $F_{\eta}(x_1,\dots,x_n; 1,t)$, and the normalizing constant (or partition function) is $P_{\lambda}(x_1,\dots,x_n; 1,t)$.  There is an inhomogeneous generalization of Macdonald polynomials due to Knop and Sahi called \emph{interpolation Macdonald polynomials} $P^*_{\lambda}(x_1,\dots,x_n;q,t)$, as well as an inhomogeneous generalization of ASEP polynomials called \emph{interpolation ASEP polynomials} $F^*_{\eta}(x_1,\dots,x_n;q,t)$ that we introduced in previous work.  In this article 
we introduce a new Markov chain called the \emph{interpolation $t$-Push TASEP}, and show that its steady state probabilities and partition function are given by the interpolation ASEP polynomials and the interpolation Macdonald polynomial, evaluated at $q=1$.  This generalizes the previous result of Ayyer, Martin, and Williams.

\end{abstract}

\maketitle

\begin{center}
\begin{minipage}{0.33\textwidth}
 \begin{center}
  \emph{We could discover\\ the polynomials of \\ 
  Knop and Sahi, by \\ studying distributions \\
  of particles on a ring}
     \end{center}
\end{minipage}
\end{center}

 \setcounter{tocdepth}{1}
 \tableofcontents

\section{Introduction}

Several recent works have given an interpretation of Macdonald polynomials in terms of stationary distributions of interacting particle models.
For example, in \cite{CantinideGierWheeler2015}, Cantini, de Gier, and Wheeler showed that the partition function of the \emph{multispecies asymmetric simple exclusion process (ASEP) on a ring} involving particles of types $\lambda_1,\dots,\lambda_n$ is equal to a Macdonald polynomial 
$P_{\lambda}(x_1,\dots,x_n;q,t)$ specialized at 
$x_1=\dots=x_n=q=1$; moreover, for each composition $\eta$ obtained by permuting the parts of $\lambda$, the  steady state probability of state $\eta$ is given by  the \emph{ASEP polynomial} $F_{\eta}$ evaluated at 
$x_1=\dots=x_n=q=1$.  (An ASEP polynomial is a certain \emph{permuted basement Macdonald polynomial}, see \cite[Proposition 4.1]{CorteelMandelshtamWilliams2022}.)

After this work, the natural question was whether one can give a parallel result which incorporates the parameters $x_1,\dots,x_n$ into the Markov chain.  This question was addressed by Ayyer, Martin, and Williams 
in   \cite{AyyerMartinWilliams2025}, who showed that the steady state probabilities and partition function of the  \emph{multispecies $t$-Push totally asymmetric simple exclusion process (TASEP)} -- a Markov chain in which there are site-dependent jump rates involving the $x_i$'s -- are given by the ASEP polynomials $F_{\eta}$ and Macdonald polynomial $P_{\lambda}$ evaluated at $q=1$.\footnote{See \cite{Defant2024} for an alternative probabilistic interpretation of these polynomials at $q=1$.}

There is an inhomogeneous generalization of Macdonald polynomials due to Knop and Sahi \cite{Knop1997b, Sahi1996} called \emph{interpolation Macdonald polynomials} $P^*_{\lambda}(x_1,\dots,x_n;q,t)$, as well as an inhomogeneous generalization of ASEP polynomials called \emph{interpolation ASEP polynomials} $F^*_{\eta}(x_1,\dots,x_n;q,t)$ that we introduced in our previous work \cite{BenDaliWilliams2025}; we also gave a combinatorial formula for the interpolation ASEP and Macdonald polynomials in terms of \emph{signed multiline queues}.  
In this article 
we introduce a new Markov chain called the \emph{interpolation $t$-Push TASEP}
(see \cref{def:intpushTASEP}), which can be seen as a generalization of the $t$-Push TASEP (see \cref{rem:generalization}).  Our main theorem (see \cref{thm:main}) says that its steady state probabilities and partition function are given by the interpolation ASEP polynomials and interpolation Macdonald polynomial evaluated at $q=1$.  This generalizes the previous result of Ayyer, Martin, and Williams.

We note that this paper addresses a (generalization of a) question which
was posed by L.W. in the recent preprint
``First Proof'' \cite{FirstProof}.  

\subsection{Interpolation polynomials}
Interpolation Macdonald polynomials and interpolation ASEP polynomials are defined by vanishing conditions, as we now explain.

Given a composition $\mu=(\mu_1,\dots,\mu_n)\in\NN^n$, we define
\begin{align} \label{eq:k}
k_i(\mu)&:=\#\{j:j<i \text{ and }\mu_j>\mu_i\}+\#\{j:j>i \text{ and }\mu_j\geq \mu_i\}, \text{ and }\\\label{eq:k2}
\overline\mu&:=\left(q^{\mu_1} t^{-k_1(\mu)},\dots,q^{\mu_n} t^{-k_n(\mu)}\right).
\end{align}

\emph{Interpolation Macdonald polynomials}, defined by Knop and Sahi \cite{Knop1997b, Sahi1996}, can be viewed as an inhomogeneous generalization of Macdonald polynomials.
\begin{thm}\cite{Knop1997b, Sahi1996}\label{def:intMacdonald}
For each partition $\lambda=(\lambda_1,\dots,\lambda_n)$, there is a unique inhomogeneous symmetric polynomial $P^*_{\lambda} =P_{\lambda}^*(\xx;q,t) = P_{\lambda}^*(x_1,\dots,x_n;q,t)$ 
called the \emph{interpolation Macdonald polynomial} such that 
\begin{itemize}
    \item the coefficient $[m_{\lambda}]P_{\lambda}^*$ of the 
    monomial symmetric polynomial $m_{\lambda}$ in $P_{\lambda}^*$ is $1$,
    \item $P_{\lambda}^*(\overline{\nu}) = 0$ for each partition $\nu \neq \lambda$ with $|\nu| \leq |\lambda|$.
\end{itemize}
Moreover, the top homogeneous component of 
$P_{\lambda}^*$ is the usual Macdonald polynomial
$P_{\lambda}.$
\end{thm}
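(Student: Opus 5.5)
Since $P^*_\lambda$ is symmetric, it suffices to study evaluations at points $\overline\nu$ with $\nu$ a partition. For a partition $\nu$ we have $k_i(\nu)=i-1$, so $\overline\nu=(q^{\nu_i}t^{1-i})_i$. The plan is to treat the vanishing conditions as a linear system on a finite-dimensional space and show that this system has a unique normalized solution.

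\emph{Setup.} Let $\Lambda_{\le d}$ be the space of symmetric polynomials in $x_1,\dots,x_n$ of degree at most $d=|\lambda|$, with basis $\{m_\mu : \ell(\mu)\le n,\ |\mu|\le d\}$. Its dimension equals the number $N$ of partitions $\nu$ with $|\nu|\le d$ and at most $n$ parts. Consider the evaluation map
\[
\mathrm{ev}:\Lambda_{\le d}\to \QQ(q,t)^N,\qquad f\mapsto (f(\overline\nu))_\nu .
\]

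\emph{Key step: $\mathrm{ev}$ is injective, hence bijective.} Suppose $f\neq0$ vanishes at every $\overline\nu$. I would prove $f=0$ by induction on $n$, working with generic $q,t$. Write $f$ in terms of the variable $x_n$, or rather the last coordinate $q^{\nu_n}t^{1-n}$. The points with $\nu_n=0$ give $x_n=t^{1-n}$, and the remaining coordinates $(q^{\nu_i}t^{1-i})_{i<n}$ run over the $\overline{\nu'}$ for $n-1$ variables. This restriction is not symmetric in the full sense, so the cleaner route is Knop–Sahi's. One shows that for the monomial order by dominance, and in particular by degree, the matrix $(m_\mu(\overline\nu))$ is nonsingular. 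Equivalently, one exhibits polynomials $f_\mu$, built as products of factors $(x_i - q^{a}t^{1-i})$ and symmetrized, that vanish at all $\overline\nu$ with $\nu\not\supseteq\mu$ and are nonzero at $\overline\mu$. This gives a triangular structure with respect to containment of diagrams, so $\mathrm{ev}$ is invertible.

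\emph{Existence and uniqueness.} By bijectivity, the subspace $W$ of elements of $\Lambda_{\le d}$ vanishing at all $\overline\nu$ with $\nu\ne\lambda$ and $|\nu|\le d$ is one-dimensional. It is spanned by some $g$ with $g(\overline\lambda)\ne0$.

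\emph{Normalization.} Next I need $[m_\lambda]g\neq0$. I would use an extra-vanishing argument. Taking $d'=|\lambda|$ but excluding $\lambda$ as well, injectivity on the smaller space spanned by $m_\mu$ with $|\mu|\le d$, $\mu\ne\lambda$ would follow from a triangularity argument that this space maps isomorphically onto values at $\nu\neq\lambda$. Then $g\notin$ that space, so its $m_\lambda$-coefficient is nonzero, and we rescale.

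\emph{Top homogeneous component.} The top degree part of $P^*_\lambda$ is a combination $m_\lambda+\sum_{|\mu|=|\lambda|,\mu\ne\lambda} c_\mu m_\mu$. To identify it with $P_\lambda$, I would show that it is triangular in dominance order and is an eigenfunction of the Macdonald operator $D$. For the latter, use Knop's inhomogeneous difference operator $D^*$. It satisfies $(D^*f)(\overline\nu)=\varepsilon_\nu f(\overline\nu)$-type identities, so $D^*P^*_\lambda$ obeys the same vanishing conditions and is therefore proportional to $P^*_\lambda$. Since the top part of $D^*$ is $D$, the top part of $P^*_\lambda$ is a $D$-eigenvector with the same leading term as $P_\lambda$, hence equals $P_\lambda$ by the distinct eigenvalues for generic $q,t$.

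\emph{Main obstacle.} The hard part is the triangularity/injectivity of evaluation, together with constructing $D^*$. I would try first to prove the containment property: $P^*_\mu(\overline\nu)=0$ unless $\mu\subseteq\nu$. This would be done inductively via the Knop–Sahi shift operator $\Phi$, using $x_n\mapsto q x_n$ combined with cycling variables.
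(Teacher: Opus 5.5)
The paper does not prove this theorem; it is quoted from Knop and Sahi, so your proposal has to stand on its own. Its architecture is the standard one: bijectivity of evaluation on $\Lambda_{\le d}$, then a one-dimensional solution space, then an inhomogeneous Macdonald operator for the top part. However, the two load-bearing steps are not actually supplied, and the routes you sketch for them do not work as stated.

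\emph{Injectivity of $\mathrm{ev}$.} You abandon the induction on $n$ because the restriction is ``not symmetric in the full sense.'' But $f(x_1,\dots,x_{n-1},t^{1-n})$ is symmetric in $x_1,\dots,x_{n-1}$, which is all the induction needs. The replacement does not work.
\begin{itemize}
\item Symmetrizing products of factors $x_i-q^at^{1-i}$ destroys the vanishing. For $n=2$ and $\mu=(1)$, symmetrizing $x_1-1$ gives $x_1+x_2-2$, which equals $t^{-1}-1\neq 0$ at $\overline{(0,0)}=(t^{-1},1)$.
\item Deriving a containment property of $P^*_\mu$ via $\Phi$ presupposes that $P^*_\mu$ exists, which is exactly what injectivity was supposed to provide.
\end{itemize}
The argument you dropped closes in a few lines. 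The points $\overline\nu$ with $\nu_n=0$ are, up to permutation, the $(n-1)$-variable points with $t^{1-n}$ appended. So by induction on $n$, $f(x_1,\dots,x_{n-1},t^{1-n})=0$, and by symmetry $f=\prod_i(x_i-t^{1-n})\,h$ with $h$ symmetric of degree $\le d-n$. For partitions $\mu$ with $\mu_n\ge 1$, the prefactor is nonzero at $\overline\mu$ for generic $q,t$. Since $\overline{\nu+(1^n)}=q\,\overline\nu$, the polynomial $h(qx)$ vanishes at every $\overline\nu$ with $|\nu|\le d-n$, and $h=0$ by induction on $d$. Incidentally, $k_i(\nu)=i-1$ is false when $\nu$ has repeated parts: $\overline\nu$ is only a permutation of $(q^{\nu_i}t^{1-i})_i$. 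This is harmless for symmetric $f$.

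\emph{Normalization and top component.} Injectivity of evaluation at the points $\overline\nu$, $\nu\neq\lambda$, on $\mathrm{span}\{m_\mu:\mu\neq\lambda\}$ is literally equivalent to $[m_\lambda]g\neq 0$. So appealing to ``a triangularity argument'' is circular. This nonvanishing is the real content of the theorem, and your top-component step then uses it as an input. It has to come out of the operator argument, run in the other order. The property you need is not $(D^*f)(\overline\nu)=\varepsilon_\nu f(\overline\nu)$, which cannot hold for all $f$. What you need is:
\begin{itemize}
\item $D^*$ preserves $\Lambda_{\le d}$;
\item its top-degree part is a Macdonald operator $D$;
\item $(D^*f)(\overline\nu)$ equals $\varepsilon_\nu f(\overline\nu)$ plus a combination of the values $f(\overline\mu)$, where $\mu$ is obtained from $\nu$ by removing one box.
\end{itemize}
Then $D^*g$ satisfies the same vanishing conditions, so $D^*g=\gamma g$. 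Evaluating at $\overline\lambda$, where $g(\overline\lambda)\neq 0$, gives $\gamma=\varepsilon_\lambda$. Injectivity in degree $d-1$ forces $\deg g=d$, so $g_{\mathrm{top}}\neq 0$ is a $D$-eigenvector with the eigenvalue of $P_\lambda$. Hence $g_{\mathrm{top}}=cP_\lambda$ with $c\neq 0$. This yields $[m_\lambda]g\neq 0$ and the identification of the top component at once. It does, however, require constructing $D^*$ and proving that local evaluation formula, which your proposal only asserts.
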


Just as interpolation Macdonald polynomials are an inhomogeneous generalization of Macdonald polynomials, 
\emph{interpolation ASEP polynomials}, defined in \cite{BenDaliWilliams2025}, can be viewed as an inhomogeneous generalization of ASEP polynomials. 

Define $\mcP_n^{(d)}$ to be the space of polynomials in $n$ variables with degree at most $d$.

\begin{thm}[{\cite[Theorem 2.17]{BenDaliWilliams2025}}]
    Let $\lambda=(\lambda_1,\dots,\lambda_n)$ be a partition of size
    $|\lambda|=\lambda_1+ \dots + \lambda_n=d$.  For each composition $\mu\in S_n(\lambda)$, there is a unique polynomial $F^*_\mu(x_1,\dots,x_n)\in \mcP_n^{(d)}$, called the \emph{interpolation ASEP polynomial}, such that:
    \begin{itemize}
     \item for $\tau \in S_n(\lambda)$, we have 
     \begin{equation}\label{eq:normalization_F}
         [x^\tau]F^*_\mu =\delta_{\tau,\mu}.
     \end{equation}
        \item for any composition $\nu$ such that $|\nu|\leq |\mu|$ and $\nu\notin S_n(\lambda)$, we have
        $F^*_\mu(\widebar{\nu})=0$.
    \end{itemize}
Moreover, the top homogeneous component of $F^*_\mu$ is the ASEP polynomial $F_\mu$. 
\end{thm}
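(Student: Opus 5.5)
The plan is a dimension count combined with a nondegeneracy statement for evaluation at the points $\overline{\nu}$. Let $V_d=\mcP_n^{(d)}$, of dimension $N=\binom{n+d}{n}$, which is also the number of compositions $\nu\in\NN^n$ with $|\nu|\le d$.

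The key input is that the evaluation map
\[
\mathrm{ev}\colon V_d\to \QQ(q,t)^{\{\nu:\,|\nu|\le d\}},\qquad f\mapsto \bigl(f(\overline\nu)\bigr)_{|\nu|\le d},
\]
is an isomorphism for generic $q,t$. This is the standard Knop--Sahi fact that the points $\overline\nu$ with $|\nu|\le d$ are in general position for degree $d$. It also underlies the existence of the nonsymmetric interpolation polynomials $E^*_\nu$, which are characterized by $E^*_\nu(\overline\mu)=0$ for $|\mu|\le|\nu|$, $\mu\ne\nu$, together with $E^*_\nu(\overline\nu)\neq 0$. I would cite this; if a self-contained argument is wanted, one specializes $t$ so that the points become distinct and uses a triangularity argument.

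Granting this, the conditions on $F^*_\mu$ in the theorem say that $F^*_\mu$ lies in the subspace $W$ of $V_d$ spanned by $\{E^*_\tau:\tau\in S_n(\lambda)\}$. Indeed, the vanishing conditions cut out exactly the polynomials whose $\mathrm{ev}$-image is supported on $S_n(\lambda)$, and the $E^*_\tau$ have triangular evaluations. Hence $\dim W=|S_n(\lambda)|=:m$.

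Next, consider the linear map $\pi\colon W\to\QQ(q,t)^{S_n(\lambda)}$ given by $f\mapsto([x^\tau]f)_\tau$. The top-degree component of $E^*_\tau$ is the nonsymmetric Macdonald polynomial $E_\tau$, which has degree $d$. Knop--Sahi, or the known expansion, gives that $E_\tau=x^\tau+(\text{lower terms in the Bruhat-type order})$. Restricting to monomials $x^\sigma$ with $\sigma\in S_n(\lambda)$, the matrix $([x^\sigma]E_\tau)_{\sigma,\tau}$ is unitriangular, hence invertible. So $\pi$ is an isomorphism, and $F^*_\mu:=\pi^{-1}(e_\mu)$ is the unique element satisfying the normalization \eqref{eq:normalization_F} and the vanishing conditions. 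This gives both existence and uniqueness.

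For the last claim, write $F^*_\mu=\sum_\tau c_\tau E^*_\tau$. Its top component is $\sum_\tau c_\tau E_\tau$, a polynomial in the span of $\{E_\tau\}$ whose coefficient of $x^\sigma$ is $\delta_{\sigma\mu}$ for all $\sigma\in S_n(\lambda)$. The ASEP polynomial $F_\mu$ is characterized in exactly this way: $\{F_\tau\}$ is the basis of $\mathrm{span}\{E_\tau:\tau\in S_n(\lambda)\}$ dual to the monomials $x^\tau$, as in \cite{CorteelMandelshtamWilliams2022}, where this holds since $F_\tau$ are permuted-basement polynomials with leading monomial $x^\tau$ and the correct support. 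So the top component equals $F_\mu$.

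The main obstacle is this final identification. One must verify from the definition of $F_\mu$ that $[x^\sigma]F_\mu=\delta_{\sigma\mu}$ for $\sigma\in S_n(\lambda)$ and that $F_\mu$ lies in the span of the $E_\tau$. I would try to derive it via the Hecke-operator relations between the $F_\mu$ and the $E_\tau$, namely $F$ obtained from $E_{\lambda^-}$ by applying $T_i$'s, together with triangularity of the monomial expansion.
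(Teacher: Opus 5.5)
Your argument is correct. The paper does not prove this statement; it quotes it from \cite{BenDaliWilliams2025}. Your proof is essentially the linear algebra behind how it is used in \cref{app:shape_permuting}. Your $W$ is the space $\VV$, your dimension count is the $E^*$-half of \cref{prop:Vstar}, and your identification of the top component amounts to the identity $\boldsymbol{\psi}(F_\mu)=F^*_\mu$ recorded before \cref{lem:dehomog}. This suggests a shortcut: once you know $F_\mu\in\mathrm{span}\{E_\tau:\tau\in S_n(\lambda)\}$ and $[x^\sigma]F_\mu=\delta_{\sigma\mu}$ on $S_n(\lambda)$, set $F^*_\mu:=\boldsymbol{\psi}(F_\mu)$. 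That gives existence and the last clause at once, and your isomorphism $\pi$ gives uniqueness. For the key input, the triangularity argument works directly over $\QQ(q,t)$, with no specialization of $t$ (distinctness of the points would not suffice anyway). Expand a polynomial of degree at most $d$ that vanishes at all $\overline{\nu}$, $|\nu|\le d$, in the basis $\{E^*_\nu\}_{|\nu|\le d}$, and evaluate at $\overline{\nu_0}$ for $\nu_0$ of minimal size in the support. The vanishing in \cref{thm:Knop_Sahi} together with $E^*_{\nu_0}(\overline{\nu_0})\neq 0$ forces the coefficient of $E^*_{\nu_0}$ to be zero.

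The step to tighten is the one you flagged. Knowing that $F_\tau$ has leading monomial $x^\tau$ only kills $[x^\sigma]F_\tau$ for $\sigma$ above $\tau$ in the triangularity order, not for every $\sigma\in S_n(\lambda)\setminus\{\tau\}$. So it does not give the dual-basis property. Your Hecke induction does close the gap, provided it is seeded as in this paper's normalization:
\begin{itemize}
\item the seed is $F_\lambda=E_\lambda$ with $\lambda$ weakly decreasing, which is the bottom of $S_n(\lambda)$ for the triangularity of $E$, so $[x^\sigma]E_\lambda=\delta_{\sigma\lambda}$ on $S_n(\lambda)$;
\item the step is $F_{s_i\mu}=T_iF_\mu$ for $\mu_i>\mu_{i+1}$, the homogeneous counterpart of \cref{prop:T_i-F_star}.
\end{itemize}
If your $\lambda^-$ means the increasing rearrangement, that seed is wrong here: $E_{(0,1)}(\bfx;1,t)=x_1+x_2$ by \cref{thm:E_partition_hom}, whereas $F_{(0,1)}=T_1x_1=x_2$.

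For the inductive step, compute directly from \eqref{eq:Hecke}. If $a=\alpha_i>b=\alpha_{i+1}$, then $T_ix^\alpha$ equals $x^{s_i\alpha}$ plus $(1-t)$ times monomials in which the exponents $(a,b)$ at positions $i,i+1$ become $(a-k,b+k)$ with $0<k<a-b$. These have sorted exponent strictly dominated by $\lambda$. So do all monomials that $T_i$ produces from a monomial $x^\beta$ with $\beta^+$ strictly dominated by $\lambda$. Every element of $\mathrm{span}\{E_\tau:\tau\in S_n(\lambda)\}$ involves only $x^\sigma$ with $\sigma\in S_n(\lambda)$ and such lower monomials. Moreover, $T_i$ preserves this span: take the top-degree part of \cref{prop:shape} together with the quadratic relation, and use $T_iE_\nu=tE_\nu$ when $\nu_i=\nu_{i+1}$. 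Hence the $S_n(\lambda)$-coefficients transform as $x^\mu\mapsto x^{s_i\mu}$, and $[x^\sigma]F_\mu=\delta_{\sigma\mu}$ on $S_n(\lambda)$ for every $\mu$, as your final step requires.
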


Just as the ASEP polynomials sum to a Macdonald polynomial, the interpolation ASEP polynomials sum to the interpolation Macdonald polynomial.  

\begin{prop}[{\cite[Proposition 2.15]{BenDaliWilliams2025}}]
\label{prop:symmetrization}For any partition $\lambda$, we have $$P^*_\lambda=\sum_{\mu\in S_n(\lambda)}F^*_\mu.$$
\end{prop}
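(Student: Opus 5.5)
The plan is to apply the uniqueness part of the Knop--Sahi characterization of $P^*_\lambda$ to the polynomial
$$G:=\sum_{\mu\in S_n(\lambda)}F^*_\mu.$$
Set $d=|\lambda|$. I would check four things: (i) $G$ has degree at most $d$; (ii) $G$ is symmetric; (iii) $[m_\lambda]G=1$; (iv) $G(\overline{\nu})=0$ for every partition $\nu\neq\lambda$ with $|\nu|\le d$.

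The easy properties go as follows.
\begin{itemize}
\item Property (i) holds because each $F^*_\mu$ lies in $\mcP_n^{(d)}$.
\item For property (iii), by \eqref{eq:normalization_F} the coefficient of $x^\tau$ in $G$, for $\tau\in S_n(\lambda)$, is $\sum_\mu\delta_{\tau,\mu}=1$. So once symmetry is known, the coefficient of $m_\lambda$ equals $[x^\lambda]G=1$.
\item For property (iv), a partition $\nu\ne\lambda$ is not in $S_n(\lambda)$, since the only partition in $S_n(\lambda)$ is $\lambda$ itself. Hence each $F^*_\mu$ vanishes at $\overline\nu$ by the vanishing condition in the defining theorem, and therefore so does $G$.
\end{itemize}

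The main obstacle is symmetry. I would use the Hecke-operator structure of interpolation ASEP polynomials. Let $T_i$ be the Demazure--Lusztig operators, taken in their inhomogeneous (Knop--Sahi) form, which act on $\mcP_n^{(d)}$ and preserve the interpolation points up to swapping $s_i$. The standard exchange relations from \cite{BenDaliWilliams2025} read as follows. If $\mu_i>\mu_{i+1}$, then $T_iF^*_\mu=F^*_{s_i\mu}$, up to normalization by powers of $t$. If $\mu_i=\mu_{i+1}$, then $T_iF^*_\mu=tF^*_\mu$.

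From these relations, summing over $\mu\in S_n(\lambda)$ and grouping $\mu$ with $s_i\mu$ gives $T_iG=tG$ for all $i$. A polynomial with $T_if=tf$ satisfies $s_if=f$, since the kernel of $T_i-t$ consists of $s_i$-symmetric functions. Therefore $G$ is symmetric.

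If those exchange relations are not available in the needed form, I would use a vanishing argument instead. Consider $G-s_iG$. It lies in $\mcP_n^{(d)}$, and its coefficients on $x^\tau$ for $\tau\in S_n(\lambda)$ vanish, because the coefficients of $G$ on $S_n(\lambda)$ are all $1$ and the set $S_n(\lambda)$ is $s_i$-stable. It also vanishes at $\overline\nu$ for compositions $\nu\notin S_n(\lambda)$ with $|\nu|\le d$. To see this, note that $\overline{s_i\nu}$ is related to $s_i\overline\nu$ in the way controlled by the Knop--Sahi symmetry of the spectral points. One then checks that $G(s_i\overline\nu)=0$, which requires care when $\nu_i=\nu_{i+1}$.

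With both conditions in hand, the uniqueness part of the defining theorem applied to the family $S_n(\lambda)$ forces $G-s_iG=0$. That uniqueness says the only polynomial in $\mcP_n^{(d)}$ with zero coefficients on $S_n(\lambda)$ and the vanishing conditions is $0$. I expect this spectral-point bookkeeping to be the delicate part.

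With (i)--(iv) established, the uniqueness in the Knop--Sahi theorem yields $G=P^*_\lambda$.
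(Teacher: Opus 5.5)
Your proof is correct. The paper gives no argument for this statement; it quotes \cite[Proposition 2.15]{BenDaliWilliams2025}. Your route is the natural one given what the paper records: show that $G=\sum_{\mu\in S_n(\lambda)}F^*_\mu$ is symmetric using the Hecke relations of \cref{prop:T_i-F_star}, then invoke the uniqueness in \cref{def:intMacdonald}. Your checks of the degree bound, of $[m_\lambda]G=[x^\lambda]G=1$, and of the vanishing at partitions $\nu\neq\lambda$ are fine as written. Two points should be tightened.

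First, delete the hedge ``up to normalization by powers of $t$''; it is not harmless. Suppose you only knew $T_iF^*_\mu=c\,F^*_{s_i\mu}$ for $\mu_i>\mu_{i+1}$. The quadratic relation $T_i^2=(t-1)T_i+t$ would then give $T_iF^*_{s_i\mu}=(t-1)F^*_{s_i\mu}+(t/c)F^*_\mu$. Hence $T_i(F^*_\mu+F^*_{s_i\mu})=(t/c)F^*_\mu+(c+t-1)F^*_{s_i\mu}$, which equals $t(F^*_\mu+F^*_{s_i\mu})$ only when $c=1$. With the normalization \eqref{eq:normalization_F} one has exactly $c=1$; this is case (1) of \cref{prop:T_i-F_star}. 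Case (3) is what handles the partner $s_i\mu$ in your grouping, so you should state both cases. The operators here are just the ordinary $T_i$ of \eqref{eq:Hecke}; no inhomogeneous variant is needed.

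Second, drop the fallback, because it does not close. If $\nu_i=\nu_{i+1}$, then $\overline{\nu}_{i+1}=t\,\overline{\nu}_i$. In that case $s_i\overline{\nu}$ is not of the form $\overline{\nu'}$ for any composition $\nu'$. So no vanishing condition tells you anything about $G(s_i\overline{\nu})$ unless you already know $s_iG=G$.

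For comparison, \cref{app:shape_permuting} offers a Hecke-free alternative. There one has $\boldsymbol{\psi}(F_\mu)=F^*_\mu$, and also $\boldsymbol{\psi}(P_\lambda)=P^*_\lambda$. The latter holds because a symmetric polynomial vanishing at $\overline{\nu}$ for all partitions $\nu$ with $|\nu|<|\lambda|$ vanishes at $\overline{\nu}$ for all compositions of that size, since each such $\overline{\nu}$ is a permutation of the point attached to the sorted $\nu$. The statement is then just the dehomogenization of the homogeneous identity $P_\lambda=\sum_\mu F_\mu$. That route is shorter but imports the homogeneous result, whereas your argument is self-contained given \cref{prop:T_i-F_star}.
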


\subsection{The Interpolation \texorpdfstring{$t$}{t}-Push TASEP}
We now define a Markov chain with state-space $S_n(\lambda)$ 
whose stationary distribution is given by the interpolation ASEP polynomials.  We call this Markov chain  the \emph{interpolation $t$-Push TASEP}, or the \emph{$t$-Push$^*$ TASEP.}

\begin{notation}\label{not:type}
Let $\lambda=(\lambda_1,\dots, \lambda_n)$
with $\lambda_1\geq\dots\geq\lambda_n\geq 0$ be a partition. We can
describe such a partition by its vector of types
$\mathbf{m}=(m_0, m_1, \dots, m_L)$, where
$m_i=m_i(\lambda):=\#\{j:\lambda_j=i\}$,
and $L$ is the largest part that occurs.
We thus sometimes denote our partition by
$\lambda=(L^{m_L}, \dots, 1^{m_1}, 0^{m_0})$.
Note that $\sum_{i=0}^L m_i=n$.  We also write
$M_i:=m_i+ m_{i+1} + \dots + m_L$.
\end{notation}

 For $1\leq k\leq n$, we define the following elements in $\QQ(t,x_1,\dots,x_n)$.
$$\mfp_k:=\frac{t^{-n+1}(1-t)}{x_k-t^{-n+2}}, \qquad \text{and}\qquad \mfq_k:= \frac{(1-t)x_k}{x_k-t^{-n+2}}.$$
We then have
\begin{equation}\label{eq:1minuspk}
1-\mfp_k=\frac{x_k-t^{-n+1}}{x_k-t^{-n+2}} \qquad \text{and}\qquad 1-\mfq_k:= \frac{t x_k-t^{-n+2}}{x_k-t^{-n+2}}.
\end{equation}

If  $0<t<1$ and $x_i>t^{-n+1}$ for $1\leq i\leq n$, then $\mfp_k$ and $\mfq_k$ are probabilities. 

\begin{definition}[The interpolation $t$-Push TASEP]\label{def:intpushTASEP}
Fix a partition $\lambda=(\lambda_1,\dots,\lambda_n)$ with at least one part of size~0.
The \emph{interpolation $t$-Push TASEP} with 
\emph{(particle) content $\lambda$} is a Markov chain
on $S_n(\lambda)$; we think of its states as configurations of particles on a ring labeled by $\lambda_1,\dots, \lambda_n$, where state $\eta$ corresponds to having  a particle labeled $\eta_j$ at position $j$.  Moreover, there is a \emph{bell} attached to each particle.
The transitions from $\eta\in S_n(\lambda)$ are obtained as follows.
\begin{enumerate}[label=(Step \arabic*)]
   \item[(Step 0)]\label{Step0} The bell at position $j$ rings with probability 
   $$P_j
   =\frac{\prod_{k<j}\left(x_k-\frac{1}{t^{n-2}}\right)\prod_{k>j}\left(x_k-\frac{1}{t^{n-1}}\right)}{e^*_{n-1}(\bfx;t)}, $$
   where 
   $e^*_{n-1}(\bfx;t)=\sum_{j=1}^n\prod_{k<j}\left(x_k-\frac{1}{t^{n-2}}\right)\prod_{k>j}\left(x_k-\frac{1}{t^{n-1}}\right)$.
      \item\label{Step1}(\emph{$t$-Push TASEP})
        The particle at position $j$, say with label $a$, is activated, and starts traveling clockwise according to the rules of the \emph{$t$-Push TASEP}.  That is, suppose  there are $m$ ``weaker'' particles in the system, i.e. particles whose species is less than $a$ (including vacancies). 
   Then with probability $\frac{t^{k-1}}{[m]_t}$ the activated particle will move to the location of the $k$th of these weaker particles.  If this location contains a particle, then that particle becomes active, and chooses a weaker particle to displace in the same way.  The procedure continues until the active particle arrives at a vacancy.  All these transitions occur simultaneously.   
   At the end of this step, position $j$ is vacant, and 
   we regard this vacancy as a particle labeled $a:=0$.

  An equivalent way to describe the above process is that 
   each time the active particle passes a site with 
   a weaker particle, it continues to move with probability $t$, and settles at that site with probability $(1-t)$, displacing and activating the particle 
   that is located there.
   If it passes the $m$th such site, then it continues cyclically around the ring.  Note that an active particle will choose the $k$th available option with probability
   $$(1-t)(t^{k-1}+t^{k-1+m} + t^{k-1+2m}+ \dots) = 
   \frac{t^{k-1}}{1+t+\dots + t^{m-1}}.$$
   
   \item\label{Step2}(\emph{Return to the bell} TASEP)
   The particle labeled $a:=0$ 
   now goes to position $1$ and starts traveling clockwise.   When it gets to site $k$ for $1\leq k\leq j-1$ containing a particle with label $b \geq 0$, it 
   skips over that site with probability
   \begin{equation}\label{eq:probskip}
        \begin{cases}
       1-\mfp_k &\text{ if $b\geq a$},\\
       1-\mfq_k &\text{ if $b<a$}
   \end{cases}
   \end{equation}
   and 
   settles at that site (displacing and activating the site's particle) 
   with probability
    \begin{equation}\label{eq:prob}
        \begin{cases}
       \mfp_k &\text{ if $b\geq a$},\\
       \mfq_k &\text{ if $b<a$}.
   \end{cases}
   \end{equation}
Once it activates a new particle, the old particle settles at site $k$ and the new active particle continues to travel clockwise towards position $j$, activating a new particle according to \eqref{eq:prob}.   
   The active particle stops once it displaces/activates another particle or arrives at position $j$, in which case it settles in position $j$. 
\end{enumerate}
The resulting configuration of particles is denoted
$\nu$, and we denote the transition probability by $\PP_\lambda(\eta,\nu)=\PP(\eta,\nu)$.

Moreover, we let $\PPone{\lambda,j}=\PPone{j}$ and $\PPtwo{\lambda,j}=\PPtwo{j}$ denote the transition probabilities associated with \ref{Step1} and \ref{Step2}, respectively. We then have, for $\mu,\nu\in S_n(\lambda)$, 
$$\PP(\mu,\nu)=\sum_{1\leq j\leq n}P_j\sum_{\rho\in S_n(\lambda):\rho_j=0}\PPone{j}(\mu,\rho)\PPtwo{j}(\rho,\nu).$$
\end{definition}
One can check that this Markov chain is irreducible; this implies that it has a unique stationary distribution $(\pi^*_\eta)_{\eta\in S_n(\lambda)}$.

\begin{remark}
   One may notice that the main differences between a classical step in $t$-Push TASEP (Step 1), and the new dynamics we introduce here in (Step 2) are the following: the activated particle can now displace a particle with a lower \emph{or} higher label. Moreover, the activated particle will stop with probability $1$ in position $j$; in particular, particles can no longer wrap several times around the ring.
    \end{remark}

\begin{figure}[t]
    \centering
    \includegraphics[height=0.25\linewidth]{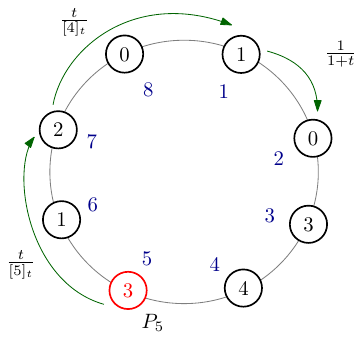}
    \includegraphics[height=0.26\linewidth]{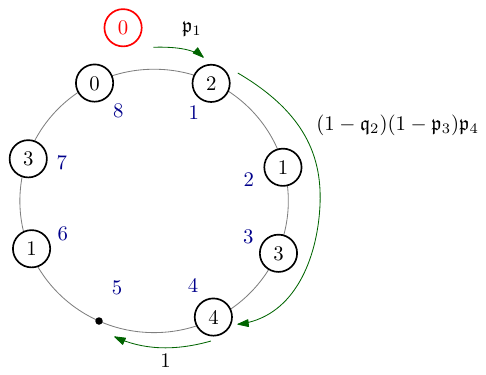}
    \includegraphics[height=0.23\linewidth]{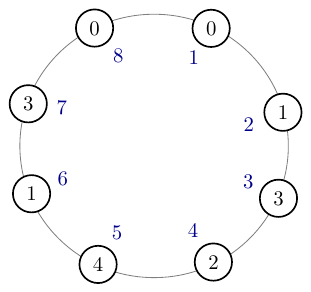}
    \caption{An example of the interpolation $t$-Push TASEP dynamics with $n=8$ and $\eta=(1,0,3,4,3,1,2,0)$. The figure on the left illustrates \ref{Step0} and \ref{Step1}: the bell rings at position $j=5$, then balls in positions 5, 7 and 1 and 2 are activated successively (the quantities next to the arrows give the transition rates). The figure in the middle illustrates \ref{Step2}: the ball at position $j=5$ is activated and travels clockwise starting from position 1. Then balls in positions 1, 2, 4<$j$ are activated successively. The figure on the right gives the final configuration.
    }
    \label{fig:Markov_chain}
\end{figure}
\begin{remark}\label{rmq:twin_particles}
Note that our assumption in \cref{def:intpushTASEP} that $\lambda$ has at least one part of size $0$ is not a restriction.  If $\lambda$ does not satisfy this property, then we can replace $\lambda$ by the partition $\tilde{\lambda} = (\lambda_1-\lambda_n, \lambda_2-\lambda_n,\dots, \lambda_{n-1}-\lambda_n, 0)$ and use the associated Markov chain.

Note also that the interpolation $t$-Push TASEP does not depend on whether a particle labeled $a$ can displace another particle labeled $a$, since they are ``identical''.
\end{remark}

\begin{example}
    We give in \cref{fig:transition_graph} the transition graph of Steps 1 and 2 in the $t$-Push$^*$ TASEP for $\lambda=(2,1,0)$, assuming that the bell rings at $j=3$ in Step 0. One may notice that, by definition, the transition probabilities in Step 2 are only defined from states $\mu$ for which $\mu_3=0$. For clarity, the loops (corresponding to transitions which do not change the configuration) are not represented in the graph. 
\end{example}

\begin{figure}[t]
    \centering
    \includegraphics[width=0.5\linewidth]{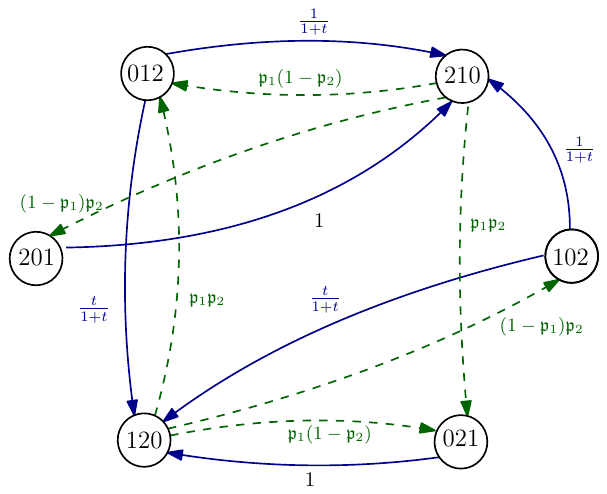}
    \caption{The transition graph of \ref{Step1} and \ref{Step2} of the $t$-Push$^*$ TASEP for $\lambda=(2,1,0)$ when $j=3$. The transition edges corresponding to \ref{Step1} (respectively \ref{Step2}) are represented in plain edges (respectively dashed edges).} 
    \label{fig:transition_graph}
\end{figure}

\begin{remark}\label{rem:generalization}
One can obtain the $t$-Push TASEP model by taking $x_i\gg t$. More formally, the asymptotics when $x_i\rightarrow \infty$ of the probabilities above give
$$P_j\sim \frac{1}{x_j}\left(\sum_{1\leq k\leq n}\frac{1}{x_k}\right)^{-1},\qquad   \mfp_k\rightarrow 0,\quad \text{and }\quad\mfq_k\rightarrow 1-t $$

Note in particular that the second asymptotic implies that nothing changes in \ref{Step2} (there is no particle activated, and the process stops when we arrive at position $j$).
\end{remark}

Our main theorem is the following.
\begin{thm}\label{thm:main}
    In the interpolation $t$-Push TASEP with content
    $\lambda=(\lambda_1,\dots,\lambda_n)$ and parameters
    $\bfx=(x_1,\dots,x_n)$ and $t$, the stationary probability of a configuration $\mu\in S_n(\lambda)$ is given by 
    $$\pi^*_{\lambda}(\mu) = \frac{F^*_{\mu}(\bfx; 1, t)}{P^*_{\lambda}(\bfx; 1, t)},$$
    where $F^*_{\mu}(\bfx;q,t)$ is the interpolation ASEP polynomial, and $P^*_{\lambda}(\bfx; q, t)$ is the interpolation Macdonald polynomial.
\end{thm}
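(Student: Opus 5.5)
We will show that the vector $(F^*_\mu(\bfx;1,t))_{\mu\in S_n(\lambda)}$ satisfies the global balance equations
\[
\sum_{\mu\in S_n(\lambda)} F^*_\mu\,\PP(\mu,\nu)=F^*_\nu \qquad\text{for all }\nu\in S_n(\lambda).
\]
Irreducibility gives uniqueness of the stationary distribution, and \cref{prop:symmetrization} identifies the normalizing constant as $P^*_\lambda$. This gives \cref{thm:main}.

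The structure of the chain suggests how to organize the balance equation. Each step is a product of a bell choice $P_j$, a Step 1 kernel $\PPone{j}$ landing in states $\rho$ with $\rho_j=0$, and a Step 2 kernel $\PPtwo{j}$. So we plan to prove an identity for each fixed $j$, expressed through intermediate vectors $G^{(j)}_\rho$ indexed by states with a vacancy at $j$. Concretely, the first identity to aim for is
\[
\sum_\mu F^*_\mu \PPone{j}(\mu,\rho)=c_j\, G^{(j)}_\rho,
\]
where $G^{(j)}$ is an explicit polynomial family, for instance $F^*$ with $x_j$ specialized or with a divided difference applied in the $j$-th variable. The second identity is
\[
\sum_j P_j\, c_j \sum_\rho G^{(j)}_\rho\PPtwo{j}(\rho,\nu)=F^*_\nu.
\]

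To get these identities we would use the signed multiline queue formula for $F^*_\mu$ from \cite{BenDaliWilliams2025}, specialized at $q=1$. Following Ayyer--Martin--Williams, Step 1 ($t$-Push TASEP) should correspond to the bottom-row dynamics of a multiline queue. In their proof, the Push TASEP transition from a multiline queue amounts to adding or removing a box in column $j$, with weight $1/x_j$ and the $t$-geometric choice of the ball paths. We would adapt that argument to signed multiline queues, using the Push TASEP transition probabilities as a bijective/weight-preserving correspondence on queues. This should produce $G^{(j)}$ as a generating function of queues with a distinguished structure at column $j$.

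Alternatively, a more algebraic route uses the exchange (Hecke) relations satisfied by $F^*_\mu$ under the Demazure--Lusztig operators $T_i$, together with the inhomogeneous shift operator (Knop's $\Phi$, with $q=1$). The interpolation analogue of the AMW approach would write the balance equation as an operator identity. In it, Step 2's ``return to the bell'' probabilities $\mfp_k,\mfq_k$ are exactly the coefficients appearing when one commutes the inhomogeneous $\Phi$ past $x_k$ factors. The denominators $x_k-t^{-n+2}$ strongly suggest that Step 2 implements $T_1\cdots T_{j-1}$-type products in the interpolation setting, and that $P_j$ arises from $e^*_{n-1}$ via a telescoping sum over $j$. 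We would verify this by checking the case $n=2,3$ (e.g.\ $\lambda=(2,1,0)$ with the transition graph) to pin down $G^{(j)}$.

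The main obstacle is Step 2. Steps 0--1 generalize AMW fairly directly, but Step 2 requires an identity showing that the inhomogeneous correction terms, i.e.\ the lower-degree parts of $F^*$, are exactly redistributed by the $\mfp_k,\mfq_k$ moves. Here I would first try induction on $j$. Processing site $k$ in Step 2 should correspond to applying one Hecke-type operator $T_k$ (or its interpolation variant) to the partially processed vector. The two cases $b\ge a$ versus $b<a$ then match the two cases of the exchange relation for $F^*$ according to whether $\mu_k\ge\mu_{k+1}$ or not. Verifying that the vanishing conditions pin down the resulting polynomial, so that it equals $F^*_\nu$, completes the argument by uniqueness in the defining theorem for $F^*_\mu$.
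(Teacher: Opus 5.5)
You have the right frame: check global balance for $(F^*_\mu(\bfx;1,t))_\mu$, then normalize with \cref{prop:symmetrization}. You also have the right combinatorial source, signed multiline queues. But the two identities that carry the proof are left as guesses, and the guesses point the wrong way.

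\textbf{The intermediate identities.} The intermediate vector is not a specialization or divided difference of $F^*$. Step 2 is also not a product of Hecke operators. The probabilities $\mfp_k,\mfq_k$ depend on $x_k$ alone, through $x_k-t^{-n+2}$, and the case split compares the travelling label with the resident label. By contrast, $T_k$ has coefficients $(tx_k-x_{k+1})/(x_k-x_{k+1})$, and the cases of \cref{prop:T_i-F_star} compare adjacent entries.

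The actual mechanism works for restricted $\lambda$, where the queue between two given rows is unique:
\begin{itemize}
\item By \cref{prop:Pone}, $\PPone{j}(\mu,\rho)=a^\mu_\rho$.
\item The numerator of $P_j$ supplies exactly the denominators $x_k-t^{-n+2}$ ($k<j$) of the Step 2 probabilities, and the weights $x_k-t^{-n+1}$ ($k>j$) of the untouched columns. Hence $P_j\,\PPtwo{j}(\rho,\nu)=c^\rho_\nu/e^*_{n-1}$ (\cref{prop:Ptwo}).
\item Here $c^\rho_\nu$ is the sign-summed generating function of signed two-line queues. \cref{lem:forget_signs} rewrites it with ball weights $x_k-t^{-n+1}$, $x_k$, $t^{-n+1}$.
\end{itemize}
So $\PP(\mu,\nu)=\sum_\rho a^\mu_\rho c^\rho_\nu/e^*_{n-1}$. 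In your template this means $c_j=e^*_{n-1}$ for every $j$, and $G_\rho=\sum_\mu F^*_{\mu^-}a^\mu_\rho$.

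\textbf{The missing algebraic identity.} This is the more serious gap. \cref{lem:F_decomposition} gives $F^*_\nu=\sum_\eta\bigl(\sum_\kappa a^\eta_\kappa c^\kappa_\nu\bigr)F^*_{\eta^-}$. To turn this into the balance equation you need $F^*_\eta(\bfx;1,t)=e^*_{n-1}(\bfx;t)\,F^*_{\eta^-}(\bfx;1,t)$ for restricted $\eta$ (\cref{cor:eta_minus}). This is specifically a $q=1$ fact. It comes from the weak reordering theorem (\cref{thm:weak-reordering}, proved via the Knop--Sahi recurrence, shape-permuting operators and the dehomogenized Alexandersson--Sawhney identity) together with the factorization in \cref{thm:factorization}. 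Nothing in your plan produces it.

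\textbf{Your closing step fails.} You propose to identify the output with $F^*_\nu$ through its vanishing conditions. At $q=1$ the points $\widebar\nu=(t^{-k_1(\nu)},\dots,t^{-k_n(\nu)})$ depend only on the relative order of $\nu$. There are at most $n!$ of them, so they cannot pin down polynomials of large degree. Moreover, the balance identity is an identity of rational functions in $\bfx$, not polynomials.

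\textbf{General content.} Your plan does not treat content with repeated parts, several vacancies, or parts equal to $1$. There the exact matching between transitions and queues used above is unavailable, and the relation between $F^*_\eta$ and $F^*_{\eta^-}$ changes. The paper proves the restricted case first, then passes to arbitrary $\lambda$ by lumping (\cref{prop:lumping}, \cref{cor:lumping}) and applying \cref{thm:weak-reordering} once more.
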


A combinatorial formula for the interpolation ASEP polynomials in terms of \emph{signed multiline queues} was given in \cite[Theorem 1.15]{BenDaliWilliams2025}.  Combining this with \cref{thm:main}, we obtain the following corollary.

\begin{cor}
Fix a partition $\lambda=(\lambda_1,\dots,\lambda_n)$, and consider the signed multiline queues of type $\mu$, for $\mu\in S_n(\lambda),$ with parameters $x_1,\dots,x_n,t$ and $q=1$.
The distribution of the bottom line of the signed multiline queues is the same as the stationary distribution of the interpolation $t$-Push TASEP with content $\lambda$.
\end{cor}

\subsection{Strategy of Proof}
Let us briefly describe the strategy for proving \cref{thm:main}, which is similar to the strategy used in 
\cite{AyyerMartinWilliams2025} for proving the analogous result connecting the $t$-Push TASEP to ASEP polynomials.
We start by giving a combinatorial proof of the theorem
when $\lambda$ has distinct parts.  We use the fact that 
interpolation ASEP polynomials are generating functions for 
signed multiline queues \cite{BenDaliWilliams2025}, 
and that signed multiline queues can be built by stacking alternating layers of ``classic'' and ``signed'' layers of pairings of balls.  We also use the fact that
$\PPone{\lambda,j}(\eta, \kappa)$ gives the probability of obtaining
$\kappa$ as Row $1'$ of a signed multiline queue with content $\lambda$, given that Row $2$ is $\eta$ and the vacancy in Row $1'$ is at site $j$.  Similarly, we show that 
$\PPtwo{\lambda,j}(\kappa, \nu)$ gives the probability of obtaining $\nu$ as Row $1$ of a signed multiline queue with content $\lambda$, given that Row $1'$ is $\kappa$ and the vacancy in Row $1'$ is at site $j$.  This allows us to directly connect the transition probabilities of our Markov chain to the generating functions for signed multiline queues, which give the interpolation ASEP polynomials.

Having proved \cref{thm:main} for partitions with distinct parts, we generalize the result to all partitions, by 
using the fact that ``recoloring'' our particles in the Markov chain via a weakly order-preserving function $\phi:\NN \to \NN$ gives rise to a projection or lumping from a ``finer'' to a ``coarser'' version of the interpolation $t$-Push TASEP.  This allows us to compute the stationary distribution of a ``coarser'' interpolation $t$-Push TASEP associated to a partition with repeated parts, using the stationary distribution coming from a partition with distinct parts.  Meanwhile, we prove that interpolation ASEP polynomials at $q=1$ satisfy a corresponding identity that gives a compatibility relation when one applies a weakly order-preserving function.

\bigskip
\subsection{On the multispecies ASEP}
As mentioned earlier,   Cantini, de Gier, and Wheeler \cite{CantinideGierWheeler2015} showed that
the specialization $P_{\lambda}(x_1=\dots=x_n=1;q=1,t)$ of the homogeneous Macdonald polynomial 
 is the partition function of the multispecies ASEP on a ring. This result follows readily from the fact that the homogeneous ASEP polynomials $F_\mu$ are characterized by a family of equations called the \emph{qKZ equations} (see e.g. \cite[Equations (17) to (19)]{CantinideGierWheeler2015}).  In the interpolation case, however, 
 we cannot hope for a straightforward analogue of the multispecies ASEP result, because 
 the interpolation polynomials are not in general  positive at $x_1=\dots=x_n=q=1$.

Another obstacle to finding an analogous probabilistic interpretation is the fact that interpolation ASEP polynomials $F_\mu^*$ do not satisfy all the qKZ equations (more precisely, they lack the circular symmetry property). It would then be interesting to find an algebraic characterization of the interpolation polynomials $F_\mu^*$ from which the qKZ equations can be recovered by taking the top homogeneous part.

\medskip 

The structure of this paper is as follows. In \cref{sec:MLQ}, we review the notions of  two-line queues and signed two-line queues. \cref{sec:recoloring_Push_TASEP} is devoted to a recoloring property of the interpolation $t$-Push TASEP model. In \cref{sec:interpolation_polynomials}, we establish several properties of the interpolation ASEP polynomials at $q=1$.  We then prove the main theorem in \cref{sec:proof}. In \cref{sec:density}, we derive the density formula. Finally, in \cref{app:shape_permuting,app:permuted_basement,app:t_int_Schur}, we collect useful properties of interpolation ASEP polynomials, along with a Jacobi--Trudi identity for symmetric interpolation polynomials at $q=t$.

\bigskip
\noindent{\bf Acknowledgements:~}
HBD acknowledges support from the Center of Mathematical Sciences and Applications at Harvard University.
LW was supported by the National Science Foundation under Award No.
DMS-2152991.
Any opinions, findings, and conclusions or recommendations expressed in this material are
those of the author(s) and do not necessarily reflect the views of the National Science
Foundation.

\section{Multiline queues}\label{sec:MLQ}

\emph{Multiline queues} are a combinatorial object whose generating functions
encode Macdonald polynomials \cite{CorteelMandelshtamWilliams2022}; roughly speaking, multiline queues can be built by stacking \emph{two-line queues}.
\emph{Signed multiline queues}
generalize multiline queues, and are a combinatorial object whose generating functions encode interpolation ASEP polynomials \cite{BenDaliWilliams2025}; roughly speaking, signed multiline queues 
can be built by stacking alternating sequences of (classical) two-line queues and \emph{signed two-line queues}.  
In this section we define both 
(classical) two-line queues and signed two-line queues.  We will later explain how these two-line queues encode transitions in the 
interpolation $t$-Push TASEP.

\subsection{(Classical) two-line queues}

\begin{definition}\label{def:paired_system}
A \emph{paired ball system} is a $2 \times n$ array
in which each position is either occupied by a ball or is empty,
and there are at least as many balls on the bottom row as on the top;
moreover, each ball in the top row is paired with one in the bottom, and joined by a shortest strand traveling weakly left to right, allowing wrapping if necessary.
\end{definition}

\begin{definition}\label{def:two-row}
A \emph{two-line queue}\footnote{In \cite{CorteelMandelshtamWilliams2022,BenDaliWilliams2025}, these objects are referred to as generalized two-line queues. For simplicity, we will call them two-line queues throughout this paper.} is a paired ball system,   such that:
\begin{enumerate}
\item each ball is labeled by an element of $\ZZ^+$;
\item paired balls have the same label;
\item\label{GMLQ:item1} if a ball with label $a\in \ZZ^+$ in the top row has a ball directly underneath it labeled $a'$, then
        we must have $a'\geq a$, and if $a'=a$, the two balls must be paired to each other. 
\end{enumerate}
The bottom and top rows of a two-line queue are then represented by  compositions $\mu$ and $\nu$ in $\NN^n$, where an empty position is represented by $0$.  We refer to a pairing of two balls in the same column as a \emph{trivial pairing}.
\end{definition}

\begin{figure}[t]
    \centering
    \includegraphics[width=0.5\linewidth]{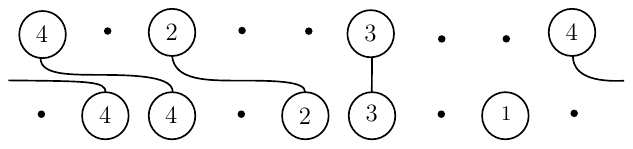}
    \caption{An example of a two-line queue in $\mathcal{Q}^{(4,0,2,0,0,3,0,0,4)}_{(0,4,4,0,2,3,0,1,0)}.$}
    \label{fig:two_line_queue}
\end{figure}

See \Cref{fig:two_line_queue} for an example of a two-line queue.

\begin{remark}\label{rem:MLQ}
    More generally, a \emph{multiline queue}
    is a vertical concatenation of 
    two-line queues, with rows labeled from $1$ to $L$ from bottom to top.
    We refer to a collection of linked pairs of balls as a \emph{strand},
    and we require that all balls in a strand whose top ball is in row $r$, are labeled $r$.
    See \cite{CorteelMandelshtamWilliams2022} for the precise definition.
\end{remark}

We associate to each pairing $p$ in a two-line queue $Q$ a weight $\wt(p)$ defined as follows: we read the balls in the top row in decreasing order of  their label; within a fixed  value, we read the balls from right to left.  Reading the balls in this order, we place the strands pairing the balls one by one.  The balls in the bottom row that have not yet been matched  are \emph{free}.  If a pairing $p$ matches a ball labeled $a$ in the top row and column $j$ to a ball labeled $a$ in the bottom row and column $k$, then the free balls in the bottom row and columns $j+1,j+2,\dots,k-1$ (where this set of columns is cyclically consecutive)
are  \emph{skipped}.  We then set 
\begin{equation}
\wt(p) = 
         (1-t) \frac{t^{\skipped(p)}}{1-t^{\free(p)}} 
\end{equation}
Having associated a weight to each nontrivial pairing,
we define $$\wt_{\pair}(Q) = \prod_p \wt(p),$$
where the product is over all nontrivial pairings of balls in $Q$.

We let  $\mathcal{Q}_\kappa^\eta$ denote the set of 
classical two-line queues with top row $\eta$ and bottom row $\kappa$, and we let
$a^\eta_{\kappa}$  
denote  the weight generating function of these two-line queues:\footnote{We note that in \cite{CorteelMandelshtamWilliams2022}, 
two-line queues were defined together with a weight function that depends on both $t$ and $q$; here we specialize to $q=1$.}
\begin{equation}\label{eq:generating_function0}
    a_\kappa^\eta=a_\kappa^\eta(t):=\sum_{Q\in\mathcal{Q}_\kappa^\eta}\wt_{\pair}(Q).
\end{equation}

\subsection{Signed two-line queues}

We review the notion of signed two-line queues introduced in \cite{BenDaliWilliams2025} to give a combinatorial formula for the interpolation ASEP polynomials $f^*_\mu(\bfx;q,t)$. We give here the case $q=1$ of this result.

\begin{definition}[\cite{BenDaliWilliams2025}]\label{def:signed_two-row}
A \emph{signed two-line queue} is a paired ball system (as in \cref{def:paired_system}), 
 such that:
\begin{enumerate}
 \item\label{GSMLQ:item1} Each pairing connects two balls with a shortest strand that travels either straight down or from left to right, and does not wrap around;
\item each ball in the top row is labeled by an element of $\ZZ\backslash\{0\}?$ and each ball in the bottom row is labeled by an element of $\ZZ^+$;
\item paired balls have labels with the same absolute value;
 \item\label{GSMLQ:item2} a ball with label $a\in \ZZ^+$ in the top row
            must always have a ball labeled $a'$ underneath it, where $a' \geq a$, and if $a'=a$, the two balls must be trivially paired;
    \item\label{GSMLQ:item3} In the top row, each negative ball with label $-a$ (for $a\in \ZZ^+$) has either an empty spot below it or a ball with label $a'$, where $a \geq a'$.
\end{enumerate}
\end{definition} 
The balls in the bottom row of a signed two-line queue are called \emph{regular balls}, and those in the top row are called \emph{signed balls}.
The bottom row is then represented by a composition $\mu\in \NN^n$, and the top row by a signed permutation $\alpha$ of $\mu$. Let $\mcG^\alpha_\mu$ denote the
set of signed two-line queues with bottom row $\mu$ and top row $\alpha$. 

See \Cref{fig:ghost_two_line_queue} for an example of a signed two-line queue.

\begin{remark}\label{rem:sMLQ}
As in \cref{rem:MLQ}, a \emph{signed multiline queue}
    is a vertical concatenation of 
    two-line queues, which alternate between signed and (usual) two-line queues, with rows labeled $1, 1', 2, 2', \dots, L, L'$ from bottom to top.
    We refer to a collection of linked pairs of balls as a \emph{strand},
    and we require that all balls in a strand whose top ball is in row $r$, have labeled which is $\pm r$.
    See \cite{BenDaliWilliams2025} for the precise definition.
\end{remark}

\begin{figure}[t]
    \centering
    \includegraphics[width=0.5\linewidth]{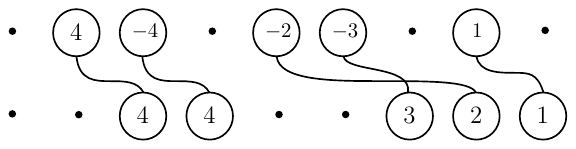}
    \caption{An example of a signed two-line queue in $\mcG_{(0,0,4,4,0,0,3,2,1)}^{(0,4,-4,0,-2,-3,0,1,0)}.$
    }
    \label{fig:ghost_two_line_queue}
\end{figure}

We associate to each pairing $p$ in a signed two-line queue a weight $\wt(p)$ defined as follows: we read the balls in the top row in decreasing order of the absolute value of their label; within a fixed absolute value, we read the balls from right to left.  Reading the balls in this order, we place the strands pairing the balls one by one.  The balls in the bottom row that have not yet been matched  are \emph{free}.  If a pairing $p$ matches a ball labeled $\pm a$ in the top row and column $j$ to a ball labeled $a$ in the bottom row and column $k>j$, then the free balls (respectively, empty positions) in the bottom row and columns $j+1,j+2,\dots,k-1$
are  \emph{skipped} (respectively, \emph{empty}).  We then set 
\begin{equation}\label{eq:pair2}
\wt(p) = \begin{cases}
         (1-t) t^{\skipped(p)+\emp(p)} &\text{ if $p$ connects a positive ball and a regular ball}\\
         -(1-t) t^{\skipped(p)+\emp(p)} &\text{ if $p$ connects a negative ball and a regular ball.}
\end{cases}
\end{equation}
Having associated a weight to each nontrivial pairing,
we define $$\wt_{\pair}(Q) = \prod_p \wt(p),$$
where the product is over all nontrivial pairings of balls in $Q$.
We define the weight generating function $b_\mu^\alpha$ of $\mcG^\alpha_\mu$ to be
\begin{equation}\label{eq:generating_function1}
    b_\mu^\alpha=b_\mu^\alpha(t):=\sum_{Q\in\mcG_\mu^\alpha}\wt_{\pair}(Q).
\end{equation}

\begin{remark}
In this paper, we are interested in the case when $\lambda$ has only one vacancy i.e $m_0(\lambda)=1$. In this case, one can check that for any pairing $p$ we have $\emp(p)=0$, and that $\skipped(p)$ has the following simpler description: if  $p$ matches a ball labeled $\pm a$ in the top row and column $j$ to a ball labeled $a$ in the bottom row and column $k>j$, then $\skipped(p)$ corresponds to the number of balls in the bottom row and in a column $j+1,j+2,\dots,k-1$ which are labeled $b<a$.
    
\end{remark}

We assign to each ball $B$ in the top row in column $k$ of a signed two-line queue $Q$ a weight
$$\wt(B):=\begin{cases}
    x_k &\text{if $B$ is positive}\\
    -\frac{1}{t^{n-1}}&\text{if $B$ is negative}.
\end{cases}$$

The total weight of $Q$ is then defined by 
$$\wt(Q):=\wt_{\pair}(Q)\prod_{B \text{ in the top row}}\wt(B).$$

Adding the ball weights to \cref{eq:generating_function1} we obtain 
\begin{equation}\label{eq:generating_function2}
    \wt_\alpha b_\mu^\alpha=\sum_{Q\in\mcG_\mu^\alpha}\wt(Q),
\end{equation}
with 
\begin{equation}
  \wt_\alpha:=\prod_{k:\,\alpha_k>0}x_k  \prod_{k:\, \alpha_k<0}\frac{-1}{t^{n-1}}.
\end{equation}

\subsection{The case when \texorpdfstring{$\lambda$}{lambda} has distinct parts.}\label{ssec:MLQ_distinct_parts}
In this section we assume that all parts of $\lambda$ are distinct ($m_i(\lambda)\leq 1$). We give an unsigned version of multiline queues in this case, which we use to prove that the generating functions $\wt_\alpha b_\mu^\alpha$ is actually positive under some assumptions on the parameters $t, x_1,\dots, x_n$. This property will be useful to obtain a Markov chain with positive transition rates.

\begin{definition}\label{def:Gbar}
Given a signed two-line queue $Q\in \mcG^\alpha_{\mu}$, we associate to it an unsigned version $\Qbar$ obtained by forgetting the signs of the balls in the top row. The composition we read in the bottom row (respectively the top row) of $\Qbar$ is $\mu$ (respectively $\lVert \alpha\rVert)$, where 
$$\lVert \alpha\rVert=(|\alpha_1|,\dots,|\alpha_n|).$$
We then define $\Gbar_\mu^{\kappa}$ as the set of paired ball systems obtained by applying this operation on $Q\in \mcG^\alpha_\mu$, where $\alpha\in \ZZ^n$ satisfying $\lVert\alpha\rVert=\kappa$.
\end{definition}

This leads us to define the following weights. Fix $\Qbar\in \Gbar^{\kappa}_\mu$:
\begin{itemize}
    \item A nontrivial pairing $p$ in $\Qbar$ has the weight
    \begin{equation}\label{eq:pairing_weight_unsigned}
  \wt(p)=(1-t)t^{\skipped(p)}.      
    \end{equation}

    \item Let $B$ be a ball labeled $a>0$ in column $k$ and such that the ball below is labeled $b$ (If $B$ has a vacancy below it, we take $b=0$.) We define the weight of $B$ by:
\begin{equation}\label{eq:weights_unsigned_balls}
\wt(B):=
\begin{cases}
    x_k-\frac{1}{t^{n-1}}&\text{if $b=a$,}\\
    x_k&\text{if $b>a$,}\\
    \frac{1}{t^{n-1}}&\text{if $b<a$}.
\end{cases}    
\end{equation}    
The weight of $\Qbar$ is defined by
$$\wt(\Qbar):=\prod_{B \text{ in the top row}}\wt(B) \prod_{p \text{ nontrivial pairing}}\wt(p).$$
\end{itemize}
 We then have the following lemma.

 \begin{lem}\label{lem:forget_signs}
 Fix a partition $\lambda$ with distinct part and two compositions $\kappa,\mu\in S_n(\lambda)$. Let $\Qbar\in \Gbar^{\kappa}_\mu$. Then
 $$\wt(\Qbar)=\sum_{Q}\wt(Q),$$
where the sum is taken over all signed two-line queues $Q$ from which $\Qbar$ is obtained by forgetting signs.
 \end{lem}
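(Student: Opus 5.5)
The plan is to show that the set of signed lifts of $\Qbar$ is a product of independent local choices, one per top-row ball, and that the weights factor accordingly. Fix $\Qbar\in\Gbar^\kappa_\mu$. A signed two-line queue $Q$ with $\overline{Q}=\Qbar$ is determined by a choice of sign for each top-row ball. We must check which choices give valid objects in $\mcG^\alpha_\mu$, and compare the weights.

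\emph{Pairing weights.} In a signed two-line queue the reading order, and hence the free and skipped balls, depend only on absolute values of labels. Since $\lambda$ has distinct parts (so a single vacancy), $\emp(p)=0$ for every pairing. So the pairing weight of each nontrivial pairing $p$ in $Q$ is $\pm(1-t)t^{\skipped(p)}$, where $\skipped(p)$ is computed in $\Qbar$, and the sign is $-$ exactly when the top ball of $p$ is negative. We absorb this sign into the ball: a negative ball at the top of a nontrivial pairing contributes $-1\cdot(-t^{-(n-1)})=t^{-(n-1)}$. A negative ball in a trivial pairing contributes just $-t^{-(n-1)}$. A positive ball always contributes $x_k$.

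\emph{Admissible signs, column by column.} Let $B$ be a top ball of label $a$ in column $k$, and let $b$ be the label below it ($b=0$ if vacant). Since parts are distinct, $b=a$ happens exactly when $B$ is trivially paired. We use conditions \eqref{GSMLQ:item2} and \eqref{GSMLQ:item3} of \cref{def:signed_two-row}.
\begin{itemize}
\item If $b>a$, only $B$ positive is allowed. Condition \eqref{GSMLQ:item3} forbids a negative ball (it needs $a\geq b$). The pairing is nontrivial, and $B$ contributes $x_k$.
\item If $b<a$ (including a vacancy), only $B$ negative is allowed, by condition \eqref{GSMLQ:item2}. The pairing is nontrivial, so $B$ contributes $t^{-(n-1)}$.
\item If $b=a$, both signs are allowed. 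A positive ball must be trivially paired, which it is. For a negative ball, condition \eqref{GSMLQ:item3} holds since $a\geq a$, and the trivial pairing carries no pairing weight. The two lifts contribute $x_k$ and $-t^{-(n-1)}$, summing to $x_k-t^{-(n-1)}$.
\end{itemize}
The remaining conditions do not depend on signs: strands go straight down or left to right without wrapping, and paired labels agree up to sign. So these are all the lifts, and the choices in different columns are independent.

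\emph{Summing.} Summing over lifts factors as a product over top-row balls of the local sums computed above, times $\prod_p(1-t)t^{\skipped(p)}$. This is exactly $\wt(\Qbar)$ as given by \eqref{eq:pairing_weight_unsigned} and \eqref{eq:weights_unsigned_balls}.

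The main point to verify carefully is the first step, that is, that the ``free/skipped'' computation is genuinely sign-independent. This holds because the reading order uses absolute values and the strands are fixed by $\Qbar$.
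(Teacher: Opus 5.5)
Your proposal is correct and follows the same route as the paper's proof. Both do a column-by-column analysis on the label $b$ below each top ball: the sign is forced to be $-$ if $b<a$ or the spot is vacant, forced to be $+$ if $b>a$, and both signs are allowed if $b=a$. In the $b=a$ case the two lifts sum to $x_k-\frac{1}{t^{n-1}}$, and in the forced-negative case the $-1$ from the ball weight cancels the $-1$ in the nontrivial pairing weight.

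Your version also makes explicit two points the paper leaves implicit: that $\skipped(p)$ does not depend on the signs, and that $\emp(p)=0$. The latter holds because no non-wrapping strand can pass over the unique bottom vacancy, since every bottom ball to its left must be matched to a top ball to its left.
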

\begin{proof}
    We consider all the possible ways of ``adding signs'' to the balls in the top row of $\Qbar$ to obtain a signed two-line queue. Fix such a ball $B$ labeled $a>0$:
    \begin{itemize}
        \item if $B$ has a vacancy below it or a ball labeled $b<a$, then from \cref{def:signed_two-row} \cref{GSMLQ:item2}, we should assign a $-$ sign to $B$. 
        \item if $B$ has a ball labeled $b>a$ below it, then from \cref{def:signed_two-row} \cref{GSMLQ:item3}, we should assign a $+$ sign to $B$.
        \item if $B$ has a ball labeled $b=a$ below it, then from \cref{def:signed_two-row} \cref{GSMLQ:item1} and the fact that $\lambda$ has distinct parts we know that the two balls should be trivially paired. In this case we can give $B$ a $+$ or $-$ sign.
    \end{itemize}
We then check that the possible signs for each ball $B$ is consistent with the choice of weights in \cref{eq:weights_unsigned_balls}. In particular, one notices that when a ball $B$ is given a $-$ sign, the ball weight should be multiplied by $-1$ when we go from $\Qbar$ to $Q$, but the weight of the pairing connected to $B$ is also multiplied by $-1$ (compare \cref{eq:pair2,eq:pairing_weight_unsigned}).
\end{proof}

Given $\kappa\in S_n(\nu)$, we define $\tb_\nu^{\kappa}$
\begin{equation}\label{eq:def_c}
  \tb_{\nu}^{\kappa}:=\sum_{\alpha:\, \lVert \alpha\rVert =\kappa}\wt_\alpha b^{\alpha}_{\nu}.  
\end{equation}

 We get the following corollary obtained by combining \cref{eq:generating_function2} and \cref{lem:forget_signs}.
\begin{lem}\label{lem:c}
 Fix $\lambda$ a partition with distinct parts, and $\kappa,\mu\in S_n(\lambda)$. Then
 $$c^\kappa_\mu=\sum_{\Qbar\in \Gbar^{\kappa}_\mu} \wt(\Qbar).$$
\end{lem}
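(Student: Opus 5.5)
We prove \cref{lem:c} by rewriting $\tb^\kappa_\mu$ as a sum over individual signed two-line queues and then grouping them by their unsigned version. Starting from \eqref{eq:def_c} and \eqref{eq:generating_function2}, we have
$$\tb^\kappa_\mu=\sum_{\alpha:\,\lVert\alpha\rVert=\kappa}\ \sum_{Q\in\mcG^\alpha_\mu}\wt(Q)=\sum_{Q\in\bigsqcup_{\lVert\alpha\rVert=\kappa}\mcG^\alpha_\mu}\wt(Q).$$
The union is disjoint because each $Q$ determines its top row $\alpha$.

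Next we use the forgetful map $Q\mapsto\Qbar$ of \cref{def:Gbar}. It sends $\bigsqcup_{\lVert\alpha\rVert=\kappa}\mcG^\alpha_\mu$ onto $\Gbar^\kappa_\mu$.
\begin{itemize}
\item It is surjective by the very definition of $\Gbar^\kappa_\mu$.
\item Each $Q$ with $\lVert\alpha\rVert=\kappa$ lands in $\Gbar^\kappa_\mu$, since forgetting signs does not change the bottom row $\mu$ and turns the top row into $\lVert\alpha\rVert=\kappa$.
\end{itemize}
Hence the double sum can be reorganized as a sum over the fibers:
$$\tb^\kappa_\mu=\sum_{\Qbar\in\Gbar^\kappa_\mu}\ \sum_{Q\mapsto \Qbar}\wt(Q).$$

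Finally, we apply \cref{lem:forget_signs}, which is valid since $\lambda$ has distinct parts and $\kappa,\mu\in S_n(\lambda)$. It says that each inner sum equals $\wt(\Qbar)$, and this gives the claimed identity.

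The only point needing care is the fiber bookkeeping. The sum in \cref{lem:forget_signs} must run over exactly the signed two-line queues with bottom row $\mu$ whose top row has absolute values $\kappa$. This holds because the signing determines $\alpha$, and $\lVert\alpha\rVert=\kappa$ automatically. So there is no real obstacle beyond this regrouping.
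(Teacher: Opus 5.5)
Your proposal is correct and follows the paper's argument exactly: the paper obtains this lemma by expanding $\tb^\kappa_\mu$ via \eqref{eq:generating_function2} into a sum over signed two-line queues with $\lVert\alpha\rVert=\kappa$, then grouping them by their unsigned version and applying \cref{lem:forget_signs} to each fiber. Your explicit check that the fibers of the forgetful map partition $\bigsqcup_{\lVert\alpha\rVert=\kappa}\mcG^\alpha_\mu$ is the only bookkeeping the paper leaves implicit.
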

    One may notice that since $\lambda$ has distinct parts, $\Gbar^{\kappa}_\nu$ is either empty or contains exactly one element.
\begin{example}
 We give in \cref{fig:UGTLQ} the unique element of $\Gbar^{(2,7,1,5,0,6)}_{(0,7,2,1,5,6)}.$   The set $\Gbar^{(2,7,1,5,0,6)}_{(0,7,2,1
 ,6,5)}$ is however empty.
\end{example}
\begin{figure}
    \centering
    \includegraphics[width=0.5\linewidth]{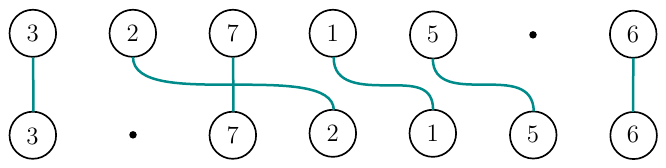}
    \caption{The unique paired ball system in $\Gbar^{(2,7,1,5,0,6)}_{(0,7,2,1,5,6)}.$}
    \label{fig:UGTLQ}
\end{figure}

Given a composition $\eta$, let $\eta^-:=(\eta^-_1,\dots,\eta^-_n)$,
where
$\eta^-_i=\max(\eta_i-1,0)$.

The weight generating functions of classical and signed two-line queues can be used to construct the interpolation ASEP polynomials. We state here this result for $q=1$; see \cite[Theorem 1.15 and Lemma 5.6]{BenDaliWilliams2025}.
     \begin{thm}[{\cite{BenDaliWilliams2025}}]\label{lem:F_decomposition}
 For any  composition $\nu\in \NN^n$, we have
 \begin{equation}\label{eq:F_decomposition}
   F^*_\nu(\bfx;1,t)=\sum_{\eta\in\NN^n}{F^{*\eta}_\nu}(\bfx;1,t) F^*_{\eta^{-}}(\bfx;1,t),  
 \end{equation}
 where 
 $$F^{*\eta}_{\nu} :=\sum_{\alpha\in\ZZ^n}b_\nu^\alpha \wt_\alpha a^\eta_{\lVert \alpha\rVert}=\sum_{\mu\in \NN^n}a_\mu^\eta c^\mu_\nu .$$
 \end{thm}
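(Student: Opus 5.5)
This statement is cited from \cite[Theorem 1.15 and Lemma 5.6]{BenDaliWilliams2025}, so the proof reduces that signed multiline queue formula to its one-level version and rearranges sums.

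First, recall the combinatorial formula at $q=1$. The polynomial $F^*_\nu(\bfx;1,t)$ is a weighted sum over signed multiline queues whose bottom row (row $1$) is $\nu$. The weight of such a queue is the product of the weights of its layers. These layers alternate between signed two-line queues (rows $1\to 1'$, $2\to 2'$, \dots) and classical two-line queues (rows $1'\to 2$, \dots), with the ball weights $x_k$ or $-1/t^{n-1}$ attached to the top row of each signed layer.

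Next, cut each signed multiline queue along row $2$. The bottom part consists of one signed layer, with bottom row $\nu$ and top row $\alpha$, followed by one classical layer, with bottom row $\lVert\alpha\rVert$ and top row $\eta$. Summing the weights of the bottom part over all such pairs gives $\sum_\alpha b^\alpha_\nu\wt_\alpha a^\eta_{\lVert\alpha\rVert}$. To check this, multiply \eqref{eq:generating_function2} by \eqref{eq:generating_function0}; the pairing weights and ball weights of the two layers are computed independently.

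The top part, from row $2$ upward, is a signed multiline queue with one fewer level. Its strands are labeled by $r$ at row $r$. Shifting every label down by one, so that balls labeled $1$ in row $2$ become vacancies, gives a signed multiline queue with bottom row $\eta^-$. The key check is that this relabeling is weight preserving and bijective. Pairing weights depend only on relative order, skipped free balls and empty positions, and a ball labeled $1$ in row $2$ ends its strand there, so it behaves as a vacancy for the layers above. The ball weights are unaffected. The number of variables $n$, and hence $t^{n-1}$, is unchanged. So the top part sums to $F^*_{\eta^-}(\bfx;1,t)$, which gives the first expression for $F^{*\eta}_\nu$.

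The second expression follows by grouping the sum over $\alpha$ according to $\mu=\lVert\alpha\rVert$ and applying the definition \eqref{eq:def_c} of $c^\mu_\nu$.

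I expect the main obstacle to be verifying that this recursion respects the conventions of \cite{BenDaliWilliams2025}. In particular, the reading order and the count of free balls in the upper layers must be the same whether row-$2$ balls labeled $1$ are treated as balls or as vacancies. This is exactly the content of their Lemma 5.6, which is invoked rather than reproved.
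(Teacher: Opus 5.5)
Your overall strategy is the natural one, and it is presumably the content of the cited Theorem 1.15 and Lemma 5.6; the paper gives no proof of its own. You cut at row $2$; the two bottom layers give $\sum_\alpha b^\alpha_\nu\wt_\alpha a^\eta_{\lVert\alpha\rVert}$; at $q=1$ the ball and pairing weights do not depend on the row, so the upper part shifted down one level is a signed multiline queue with bottom row $\eta^-$. Grouping by $\mu=\lVert\alpha\rVert$ to get the second expression is also fine.

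The gap is in the range of $\eta$ and in your bijectivity claim.

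\textbf{Which $\eta$ occur.} In a signed multiline queue, every ball of row $1$ is paired into row $1'$, strands labeled $\pm 1$ have their top ball in row $1'$, and every strand labeled $a\ge 2$ passes through row $2$. So row $2$ never contains a ball labeled $1$; your step ``balls labeled $1$ in row $2$ become vacancies'' never occurs. Moreover, the nonzero entries of row $2$ are exactly the parts of $\nu$ that are $\ge 2$.

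\textbf{Failure of surjectivity.} Suppose $\eta$ has a part equal to $1$, or is missing one of these parts. Gluing a bottom part with top row $\eta$ to a signed multiline queue with bottom row $\eta^-$ then yields an invalid object: either a strand labeled $1$ reaches row $2$, or a strand labeled $a\ge 2$ stops at row $1'$. Yet $F^{*\eta}_\nu F^*_{\eta^-}$ is generally nonzero, because $a^\eta_\mu$ as defined allows arbitrary top rows.

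\textbf{Counterexample.} Take $n=2$ and $\nu=(1,0)$. Then $c^{(1,0)}_{(1,0)}=x_1-t^{-1}$ and $c^{(0,1)}_{(1,0)}=0$. Also $a^{\eta}_{(1,0)}=1$ for each $\eta\in\{(0,0),(1,0),(0,1)\}$; the last is a wrapping pairing with one free ball. The unrestricted sum over $\eta\in\NN^n$ therefore gives $3(x_1-t^{-1})$, whereas $F^*_{(1,0)}=x_1-t^{-1}$.

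\textbf{Correct range.} Your map is a weight-preserving bijection only onto pairs with $\eta\in S_n(\widetilde\nu)$, where $\widetilde\nu$ is obtained from $\nu$ by replacing its parts equal to $1$ by $0$. The identity as printed overcounts and holds only with the sum restricted to these $\eta$. This restricted form is what the paper actually uses in Section~5, where $\lambda$ is restricted and $\eta$ runs over $S_n(\lambda)$.

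You should state this restriction explicitly and verify that cutting at row $2$ is surjective onto exactly that set, rather than appealing to row-$2$ balls labeled $1$ becoming vacancies.
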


\section{Recoloring the interpolation \texorpdfstring{$t$}{t}-Push TASEP}\label{sec:recoloring_Push_TASEP}
Following \cite{AyyerMartinWilliams2025}, we introduce the following definition.
\begin{definition}
    We say that a function $\phi$ from $\NN$ to $\NN$ is \emph{weakly order-preserving} if $\phi(i)\leq \phi(j)$ whenever $i\leq j$.
\end{definition}
If $\mu\in \NN^n$ is a composition, we define $\phi(\mu):=(\phi(\mu_1),\dots,\phi(\mu_n))\in \NN^n$.
Note that if $\mu$ is a partition then so is $\phi(\mu)$. In what follows, we will often apply a weakly order-preserving map to states of the interpolation $t$-Push TASEP; we refer to this as 
\emph{recoloring}.

\begin{prop}\label{prop:lumping}
    Let $\phi:\NN \to \NN$ be a weakly order-preserving function with $\phi(0)=0$, and let $\lambda=(\lambda_1,\dots,\lambda_n)$ and $\kappa=(\kappa_1,\dots,\kappa_n)$ be  partitions such that $\kappa=\phi(\lambda)$.
Then the interpolation $t$-Push TASEP with particle content $\lambda$ lumps (or projects) to the interpolation $t$-Push TASEP with particle content $\kappa$.  That is,  
    if $\eta,\zeta\in S_n(\kappa)$ and $\mu\in S_n(\lambda)$ such that $\phi(\mu)=\eta$, then
    $$\sum_{\nu\in S_n(\lambda):\, \phi(\nu)=\zeta}\PP_{\lambda}(\mu,\nu)=\PP_\kappa(\eta,\zeta).$$
\end{prop}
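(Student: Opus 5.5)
We will prove the lumping property step by step, following the decomposition
$$\PP_\lambda(\mu,\nu)=\sum_{j}P_j\sum_{\rho:\rho_j=0}\PPone{\lambda,j}(\mu,\rho)\PPtwo{\lambda,j}(\rho,\nu).$$
The bell probabilities $P_j$ depend only on $\bfx$, $t$ and $n$, not on the content, so Step 0 is trivially compatible with $\phi$. It therefore suffices to prove two separate lumping identities, one for each $j$. For $\mu\in S_n(\lambda)$ with $\phi(\mu)=\eta$, we want
$$\sum_{\rho:\,\phi(\rho)=\theta}\PPone{\lambda,j}(\mu,\rho)=\PPone{\kappa,j}(\eta,\theta),$$
and for $\rho$ with $\rho_j=0$ and $\phi(\rho)=\theta$, we want
$$\sum_{\nu:\,\phi(\nu)=\zeta}\PPtwo{\lambda,j}(\rho,\nu)=\PPtwo{\kappa,j}(\theta,\zeta).$$
We first need that $\rho_j=0$ forces $\theta_j=0$, which is immediate from $\phi(0)=0$. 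Summing the two identities over intermediate states gives the proposition: we group the $\rho$ according to $\theta=\phi(\rho)$, apply the second identity inside each group, and then apply the first.

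\textbf{Step 1.} This is the $t$-Push TASEP started from a fixed activated site $j$. The lumping argument of \cite{AyyerMartinWilliams2025} applies essentially verbatim, but we will redo it as a coupling. We use the second description of the dynamics, in which the active particle passes each weaker site and settles there with probability $1-t$. Consider an active particle of label $a$ in the fine system and $\phi(a)$ in the coarse system. Sites with label $b<a$ and $\phi(b)<\phi(a)$ are treated identically in both systems.

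The discrepancy comes from sites with $b<a$ but $\phi(b)=\phi(a)$, where the fine particle may settle but the coarse one may not. At such a site we use the remark that identical labels are interchangeable. In the coarse picture, "the active particle of label $\phi(a)$ displaces a particle of the same label $\phi(b)=\phi(a)$, which then continues" is indistinguishable from "the active particle passes". Since the rest of the trajectory then continues with an active particle of the same coarse label, the recolored trajectories have the same law.

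To make this rigorous, we will show by induction along the trajectory that the recolored state together with the recolored active label is a Markov chain with the coarse kernel. The key fact is memorylessness: the future of the geometric-type search, conditioned on the current position and active coarse label, is the same in both systems. Wrapping around the ring is handled automatically by the per-site description. The process terminates at a vacancy, and since $\phi^{-1}(0)\ni 0$, the vacancies of the two systems are compatible. If $\phi$ sends positive labels to $0$, some particles become vacancies in the coarse picture, and the fine system then continues past coarse vacancies. Here again we will use that a coarse active $0$ displacing a $0$ is equivalent to stopping, and check that in the fine chain, once the active label maps to $0$, all subsequent moves recolor to the identity.

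\textbf{Step 2.} The rates $\mfp_k$ and $\mfq_k$ depend only on whether $b\ge a$ or $b<a$. The difficult case is $b<a$ with $\phi(b)=\phi(a)$: the fine system uses $\mfq_k$, while the coarse system uses $\mfp_k$, because in coarse terms $b\ge a$. These are different probabilities, so the naive coupling fails here, and we expect this to be the main obstacle. We will again use the symmetry that exchanging two particles with equal coarse labels does not change the coarse state. Whether the fine particle settles or skips at such a site, the coarse state and the coarse active label after this site are the same: a displacement swaps the active label $a$ with $b$, both of which have the same coarse label. Hence the probability assigned to that site is irrelevant for the lumped process, and both systems effectively just pass through the site.

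We will formalize this with the same site-by-site induction as in Step 1, conditioning on the position and the coarse active label. The process stops deterministically at position $j$, so no wrapping issues arise. We will also verify that the initial active label in Step 2 is $0$ in both systems, so the induction starts correctly.
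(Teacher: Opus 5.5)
Your proposal is correct and follows the same route as the paper. Since $P_j$ does not depend on the state, the proposition reduces to kernel-level lumping identities for $\PPone{\lambda,j}$ and $\PPtwo{\lambda,j}$, which the paper only asserts, appealing to the interchangeability of equally labeled particles (\cref{rmq:twin_particles}); your site-by-site strong-lumpability argument supplies the missing justification. Its key observation is that at a site with $b<a$ and $\phi(b)=\phi(a)$, settling and skipping project to the same coarse state, so the mismatch between $\mfq_k$ and $\mfp_k$ in Step 2, and between settling and passing in Step 1, never shows up in the coarse chain.
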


\begin{proof}
We note that each of the three quantities $(P_j,\PPone{},\PPtwo{})$ defining the transition rates in the interpolation $t$-Push TASEP behaves well under recoloring: $P_j$ is independent from the state $\mu$, and for any $j$, we have:
\begin{align*}
  &\sum_{\rho\in S_n(\lambda):\, \phi(\rho)=\theta}\PPone{\lambda,j}(\mu,\rho)
  =\PPone{\kappa,j}(\eta,\theta),\quad \text{ for any $\eta,\theta\in S_n(\kappa)$ and $\mu\in S_n(\lambda)$ with $\phi(\mu)=\eta$,}\\
    &\sum_{\nu\in S_n(\lambda):\, \phi(\nu)
    =\zeta}\PPtwo{\lambda,j}(\rho,\nu)=\PPone{\kappa,j}(\theta,\zeta),\quad \text{ for any $\theta,\zeta\in S_n(\kappa)$ and $\rho\in S_n(\lambda)$  with $\phi(\rho)=\theta$.}
\end{align*}

 \cref{prop:lumping} follows from the above observations, \cref{rmq:twin_particles}, and the fact that the 
 state space of the first Markov chain projects onto the state space of the second.
\end{proof}

\begin{figure}[t]
    \centering
    \includegraphics[height=0.25\linewidth]{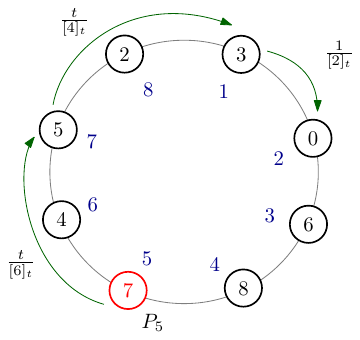}
    \includegraphics[height=0.26\linewidth]{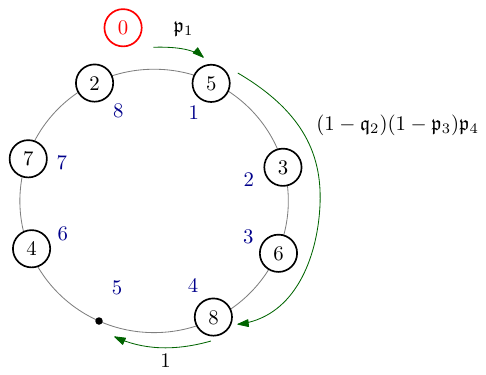}
    \includegraphics[height=0.23\linewidth]{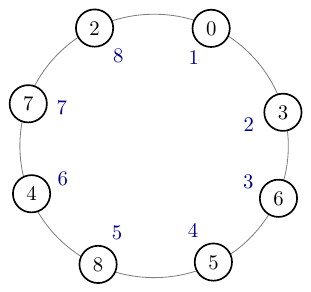}
    \caption{An example of the interpolation $t$-Push TASEP dynamics with $n=8$ for a restricted partition $\lambda=(8,7,6,5,4,3,2,0)$ and a state $\mu=(3,0,6,8,7,4,5,2)\in S_n(\lambda)$.}
    \label{fig:Markov_chain_2}
\end{figure}

\begin{example}
    In \cref{fig:Markov_chain_2}, we illustrate a transition for the $t$-Push$^*$ TASEP indexed by $\lambda=(8,7,6,5,4,3,2,0)$.  The corresponding transition for the $t$-Push$^*$ TASEP indexed by indexed by $\kappa=(4,3,3,2,1,1,0,0)$ from \cref{fig:Markov_chain} is then obtained by projecting, using for example by the map:
    $$\phi(i)=\begin{cases}
        0 & \text{if $i\in \{0,1,2\}$}\\
        1 & \text{if $i\in \{3,4\}$}\\
        2 & \text{if $i\in \{5\}$}\\
        3 & \text{if $i\in \{6,7\}$}\\
        4 & \text{if $i\geq 8$.}\\
    \end{cases}$$
\end{example}

We then have the following consequence on the stationary distribution. It follows from the strong lumping property of Markov chains (see \textit{e.g} \cite{KemenySnell1976,Pang2019,Williams2022}.)

\begin{cor}\label{cor:lumping}
Let $\phi,\lambda$ and $\kappa$ as above. Then  the stationary distributions of the 
interpolation $t$-Push TASEPs with particle contents $\kappa$ and $\lambda$ are related as follows:
for any $\eta\in S_n(\kappa)$, 
$$\pi^*_\kappa(\eta)=\sum_{\rho\in S_n(\lambda):\, \phi(\rho)=\eta}\pi^*_\lambda(\rho).$$
\end{cor}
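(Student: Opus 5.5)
The plan is to push the stationary distribution of the finer chain forward along $\phi$ and check that the result is stationary for the coarser chain; uniqueness then identifies it with $\pi^*_\kappa$. Define a measure $\tilde\pi$ on $S_n(\kappa)$ by
$$\tilde\pi(\eta):=\sum_{\rho\in S_n(\lambda):\,\phi(\rho)=\eta}\pi^*_\lambda(\rho).$$
It is a probability measure, because $\phi$ maps $S_n(\lambda)$ onto $S_n(\kappa)$: the fibres partition $S_n(\lambda)$, so the total mass is still $1$.

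For stationarity, fix $\zeta\in S_n(\kappa)$. Using stationarity of $\pi^*_\lambda$, then grouping the states $\mu$ according to $\eta=\phi(\mu)$, we get
\begin{align*}
\tilde\pi(\zeta)&=\sum_{\nu:\,\phi(\nu)=\zeta}\ \sum_{\mu\in S_n(\lambda)}\pi^*_\lambda(\mu)\PP_\lambda(\mu,\nu)\\
&=\sum_{\eta\in S_n(\kappa)}\ \sum_{\mu:\,\phi(\mu)=\eta}\pi^*_\lambda(\mu)\sum_{\nu:\,\phi(\nu)=\zeta}\PP_\lambda(\mu,\nu).
\end{align*}
By \cref{prop:lumping}, the innermost sum equals $\PP_\kappa(\eta,\zeta)$. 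This value depends on $\mu$ only through $\eta=\phi(\mu)$, which is exactly the strong lumping condition. It therefore factors out of the sum over $\mu$, and the expression becomes $\sum_\eta\tilde\pi(\eta)\PP_\kappa(\eta,\zeta)$. So $\tilde\pi$ is stationary for the $\kappa$-chain.

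The $\kappa$-chain is irreducible, as noted after \cref{def:intpushTASEP}, so its stationary distribution is unique. Hence $\tilde\pi=\pi^*_\kappa$, which is the claimed identity. Two hypotheses need care here. First, $\phi(0)=0$ guarantees that $\kappa$ still has a part equal to $0$, so the $\kappa$-chain is defined; if needed, \cref{rmq:twin_particles} handles this normalization. Second, irreducibility and positivity of the transition probabilities require generic parameters, say $0<t<1$ and $x_i>t^{-n+1}$. If the identity is wanted as an identity of rational functions, I would extend it from this open parameter region by polynomial identity, since the unique stationary vector is given by rational expressions in the parameters (e.g. via the Markov chain tree theorem).

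The only substantive input is \cref{prop:lumping}, and the main point to verify is that its right-hand side really is independent of the choice of lift $\mu$ of $\eta$. The statement of \cref{prop:lumping} gives this directly, so no real obstacle remains beyond the bookkeeping above.
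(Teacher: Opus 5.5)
Your proposal is correct and follows the paper's approach: the paper just cites the strong lumping property of Markov chains, and you have written out the standard proof of that fact. You push $\pi^*_\lambda$ forward along $\phi$, use \cref{prop:lumping} (Dynkin's criterion) to show the pushforward is stationary for the $\kappa$-chain, and conclude by uniqueness, which follows from irreducibility.
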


\section{Interpolation polynomials}\label{sec:interpolation_polynomials}

\subsection{Some properties of interpolation polynomials}
\begin{definition}\label{def:int_elem}
Fix $n\geq 1$. For any $k\geq 1$, we define the \emph{elementary interpolation polynomials}
$$e^*_{k}(x_1,\dots,x_n;t):=\sum_{\begin{subarray}{c}S\subseteq \llbracket n \rrbracket\\  \#S=k\end{subarray}}\prod_{i\in S}\left(x_i-\frac{t^{\#S^c\cap \llbracket i-1\rrbracket}}{t^{n-1}}\right),$$
where the sum runs over subsets $S$ of $\llbracket n\rrbracket:=\{1,\dots, n\}$ of size $k$, and where $S^c$ denotes the complement of $S$ in $\llbracket n\rrbracket$. By convention, we set
$$e_0^*(x_1,\dots,x_n;t)=1.$$
We extend this definition to any partition $\lambda=(\lambda_1,\dots,\lambda_n)$:
$$e_\lambda^*(\bfx;t):=\prod_{i=1}^n e^*_{\lambda_i}(\bfx;t).$$
\end{definition}
The top homogeneous part of $e_{\lambda}^*$ correspond to the elementary symmetric polynomials.
We recall the following results for the interpolation ASEP polynomials; see \cite[Lemma 7.2]{BenDaliWilliams2025}.
\begin{lem}[\cite{{BenDaliWilliams2025}}]\label{lem:P_column}
        For any $\mu\in\{0,1\}^n$, we have
        \begin{equation}\label{eq:factorization_one_row}
          F_\mu^*(x_1,\dots,x_n;q,t)=\prod_{i\in S_\mu}\left(x_i-\frac{t^{\#S^c_\mu\cap \llbracket i-1\rrbracket}}{t^{n-1}}\right), \end{equation}
          where $S_\mu:=\{i\in \llbracket n\rrbracket: \, \mu_i=1\}.$
        As a consequence, 
    \begin{equation}\label{eq:P_column}
          P^*_{(1^m,0^{n-m})}(x_1,\dots,x_n;q,t)=\sum_{\mu\in S_n(1^m,0^{n-m})}F^*_{\mu}(x_1,\dots,x_n;q,t)=e^*_m(x_1,\dots,x_n;t).  
        \end{equation}
        The top homogeneous part of this equation gives
    \begin{equation}\label{eq:P_column_hom}
          P_{(1^m,0^{n-m})}(x_1,\dots,x_n;q,t)=\sum_{\mu\in S_n(1^m,0^{n-m})}F_{\mu}(x_1,\dots,x_n;q,t)=e_m(x_1,\dots,x_n;t).  
        \end{equation}
\end{lem}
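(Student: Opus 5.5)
The plan is to check that the product on the right-hand side of \eqref{eq:factorization_one_row} satisfies the two defining conditions of the interpolation ASEP polynomial from \cite[Theorem 2.17]{BenDaliWilliams2025}, and then conclude by uniqueness. Fix $\mu\in S_n(1^m,0^{n-m})$ and write $S=S_\mu$. Set
$$G_\mu:=\prod_{i\in S}\Bigl(x_i-t^{\#S^c\cap\llbracket i-1\rrbracket-(n-1)}\Bigr).$$
This polynomial has degree $m=|\mu|$, so $G_\mu\in\mcP_n^{(m)}$. Its only monomial of degree $m$ is $x^\mu=\prod_{i\in S}x_i$, and every other monomial has strictly smaller degree. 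Since every $\tau\in S_n(1^m,0^{n-m})$ has degree $m$, this gives $[x^\tau]G_\mu=\delta_{\tau,\mu}$, which is the normalization \eqref{eq:normalization_F}.

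The main step is the vanishing condition. Let $\nu$ be a composition with $|\nu|\le m$ and $\nu\notin S_n(1^m,0^{n-m})$. First, $\nu$ has fewer than $m$ nonzero entries: if it had at least $m$, then $|\nu|\le m$ would force exactly $m$ entries equal to $1$ and the rest $0$, putting $\nu$ in $S_n(1^m,0^{n-m})$. We need an index $i\in S$ for which the factor indexed by $i$ vanishes at $\widebar\nu$. Since $G_\mu$ does not involve $q$ and $\widebar\nu_i=q^{\nu_i}t^{-k_i(\nu)}$, we require $\nu_i=0$. For such $i$, \eqref{eq:k} gives
$$k_i(\nu)=\#\{j<i:\nu_j>0\}+(n-i),$$
because every $j>i$ satisfies $\nu_j\ge 0=\nu_i$. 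The factor vanishes exactly when $k_i(\nu)=n-1-\#(S^c\cap\llbracket i-1\rrbracket)$. Using $\#(S^c\cap\llbracket i-1\rrbracket)=(i-1)-\#(S\cap\llbracket i-1\rrbracket)$, this condition becomes
$$\#\{j<i:\nu_j>0\}=\#(S\cap\llbracket i-1\rrbracket).$$

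To produce such an $i$, I would use a discrete intermediate value argument. Define
$$f(i)=\#\{j<i:\nu_j>0\}-\#(S\cap\llbracket i-1\rrbracket),\qquad 1\le i\le n+1.$$
Then $f(1)=0$, and $f(n+1)<0$ because $\nu$ has fewer than $m=\#S$ nonzero entries. Each step $f(i+1)-f(i)=[\nu_i>0]-[i\in S]$ lies in $\{-1,0,1\}$. Hence there is some $i$ with $f(i)=0$ and $f(i+1)=-1$. For that $i$ we have $i\in S$, $\nu_i=0$ and $f(i)=0$, which is exactly the required condition, so $G_\mu(\widebar\nu)=0$.

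By the uniqueness in \cite[Theorem 2.17]{BenDaliWilliams2025}, $F^*_\mu=G_\mu$, for every $q$. Summing over $\mu\in S_n(1^m,0^{n-m})$ amounts to summing over subsets $S$ of size $m$, which reproduces \cref{def:int_elem}. Together with \cref{prop:symmetrization}, this gives \eqref{eq:P_column}. Finally, taking top homogeneous components, and using that those of $F^*_\mu$ and $P^*_\lambda$ are $F_\mu$ and $P_\lambda$ while that of $e^*_m$ is $e_m$, gives \eqref{eq:P_column_hom}. I expect the only real obstacle to be the vanishing step, namely finding the index $i$, and the walk argument above should handle it.
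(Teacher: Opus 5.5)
Your proof is correct. The paper does not prove this lemma itself; it is imported from \cite{BenDaliWilliams2025}, so there is no in-text argument to compare against.

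Your route is a self-contained verification through the characterization theorem for $F^*_\mu$:
\begin{itemize}
\item the normalization \eqref{eq:normalization_F} follows directly from the top-degree part $\prod_{i\in S_\mu}x_i$;
\item the vanishing condition correctly reduces to finding $i\in S_\mu$ with $\nu_i=0$ and $\#\{j<i:\nu_j>0\}=\#(S_\mu\cap\llbracket i-1\rrbracket)$;
\item your first-passage argument for $f$ produces such an $i$, since $f$ goes from $0$ to a negative value, each step drops by at most one, and a step down occurs only at an index $i\in S_\mu$ with $\nu_i=0$.
\end{itemize}

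Two other natural routes within this framework would be the signed multiline queue expansion, or the Hecke and Knop--Sahi relations of \cref{app:shape_permuting}. Compared with these, your approach uses only statements recorded in the paper and needs no case analysis of queue configurations. It also makes the $q$-independence of $F^*_\mu$ for $\mu\in\{0,1\}^n$ automatic, by uniqueness over $\QQ(q,t)$, which is exactly what is needed when the lemma is applied at $q=1$.

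The deduction of \eqref{eq:P_column} from \cref{def:int_elem} and \cref{prop:symmetrization} is correct. So is the deduction of \eqref{eq:P_column_hom} by taking degree-$m$ components.
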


We have the following factorization formula for interpolation symmetric Macdonald polynomials at $q=1$.
\begin{thm}[\cite{Dolega2017,BenDaliWilliams2025}]\label{thm:factorization_property}\label{thm:factorization}
For any partition $\lambda$
    \begin{equation}\label{eq:factorization}
      P^*_\lambda(x_1,\dots,x_n;1,t)=\prod_{1\leq i\leq \lambda_1}P^*_{\lambda'_i}(x_1,\dots,x_n;1,t)=\prod_{1\leq i\leq \lambda_1}e^*_{\lambda'_i}(x_1,\dots,x_n;t),
    \end{equation}
    where $\lambda'$ is the partition conjugate to $\lambda$.
\end{thm}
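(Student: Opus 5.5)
The plan is to induct on $\lambda_1$ and prove the one-step identity
\begin{equation*}
P^*_\lambda(\bfx;1,t)=e^*_{\lambda'_1}(\bfx;t)\,P^*_{\lambda^-}(\bfx;1,t),
\end{equation*}
where $\lambda^-$ is obtained by subtracting $1$ from every nonzero part, so that $(\lambda^-)'_i=\lambda'_{i+1}$. The base case $\lambda_1\le 1$ is exactly \cref{lem:P_column}. The second equality in \eqref{eq:factorization} also follows from \cref{lem:P_column}, which gives $P^*_{(1^m,0^{n-m})}=e^*_m$. The inductive step will come from the layer decomposition of \cref{lem:F_decomposition}, summed over $\nu\in S_n(\lambda)$ using \cref{prop:symmetrization}:
\begin{equation*}
P^*_\lambda(\bfx;1,t)=\sum_{\eta}\Big(\sum_{\mu}a^\eta_\mu\sum_{\nu\in S_n(\lambda)}c^\mu_\nu\Big)F^*_{\eta^-}(\bfx;1,t).
\end{equation*}

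\textbf{Step 1: collapse the signed layer.} First I would show that for every row $1'$ configuration $\mu$ that occurs, the sum $\sum_{\nu\in S_n(\lambda)}c^\mu_\nu$ equals $e^*_{m}(\bfx;t)$, where $m=\lambda'_1$ is the number of nonzero parts. In particular this sum should be independent of $\mu$.

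To prove it, I would use \cref{prop:lumping}-style recoloring at the level of signed two-line queues. Applying $\phi(i)=\min(i,1)$ to all labels sends the sum over bottom rows $\nu$ to the case of content $(1^m,0^{n-m})$. The justification is that, summed over how strands are attached, the pairing weights $(1-t)t^{\skipped}$ and the signed ball weights only see which positions in the bottom row are occupied. In the content $(1^m,0^{n-m})$ case we have $\eta^-=0$, so $\sum_\nu F^*_\nu=\sum_{\nu}c^{\mu}_\nu$, and this equals $e^*_m$ by \eqref{eq:P_column}.

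When $\lambda$ has distinct parts, I would verify the label-independence concretely via \cref{lem:c} and the unsigned weights \eqref{eq:weights_unsigned_balls}. For a fixed top row, summing the unsigned weights over all admissible bottom rows should telescope column by column into the factors $x_i-t^{\#S^c\cap\llbracket i-1\rrbracket}/t^{n-1}$.

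\textbf{Step 2: collapse the classical layer.} Next I would use that at $q=1$ the classical pairing weights are probabilities, i.e. $\sum_\mu a^\eta_\mu=1$. This holds because each pairing chooses among the $\free(p)$ free balls with weights $(1-t)t^{k-1}/(1-t^{\free})$, which sum to $1$.

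Combining the two steps gives $P^*_\lambda=e^*_{m}\sum_{\eta}F^*_{\eta^-}$. The remaining claim is that $\sum_\eta F^*_{\eta^-}=P^*_{\lambda^-}$, again by \cref{prop:symmetrization}.

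\textbf{Main obstacle: bookkeeping of $\eta$ in Step 2.} The map $\eta\mapsto\eta^-$ is not injective, since parts equal to $1$ and $0$ merge. So I must check which $\eta$ actually carry nonzero weight in \cref{lem:F_decomposition}. The expectation is that the row-$2$ top configurations are compositions in $S_n(\lambda)$ whose parts equal to $1$ correspond to strands starting in row $1$ and hence to vacancies above. Under this reading, the $\eta$ with a given $\eta^-$ should contribute exactly once after summing over $\mu$.

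If this bookkeeping becomes messy, the fallback is to apply the recoloring \cref{cor:lumping}-type identity for interpolation ASEP polynomials at $q=1$. That identity states $\sum_{\rho:\phi(\rho)=\eta}F^*_\rho=F^*_\eta$, and it would let me first reduce to distinct parts, where Steps 1–2 are cleanest, and then push forward.
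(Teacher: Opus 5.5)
\textbf{There is a genuine gap: Steps 1 and 2 are each false as stated.} The paper does not prove this theorem; it imports it from \cite{Dolega2017,BenDaliWilliams2025}. Your argument therefore has to stand on its own. Only a coupled version of your two steps is true, and the two errors compensate each other, which is why the final formula comes out right.

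\emph{Step 1 fails.} The sum $\sum_{\nu}c^\mu_\nu$ is neither $e^*_m$ nor independent of $\mu$. Take $n=2$ and $\lambda=(1,0)$. Then $c^{(1,0)}_{(1,0)}=x_1-\frac1t$ and $c^{(1,0)}_{(0,1)}=\frac{1-t}{t}$, so $\sum_\nu c^{(1,0)}_\nu=x_1-1$, while $\sum_\nu c^{(0,1)}_\nu=x_2-\frac1t$. Only the total over $\mu$ equals $e^*_1$. More generally, for restricted $\lambda$, \cref{prop:Ptwo} together with the fact that Step 2 is a probability distribution gives
$\sum_\nu c^\rho_\nu=\prod_{k<j}(x_k-t^{-(n-2)})\prod_{k>j}(x_k-t^{-(n-1)})$, which depends on the vacancy position $j$. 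The slip is the equality ``$\sum_\nu F^*_\nu=\sum_\nu c^\mu_\nu$''. In the single-column case one has $F^*_\nu=\sum_\mu c^\mu_\nu$, with $\mu$ running over all $\binom nm$ supports, not a fixed one.

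\emph{Step 2 fails symmetrically.} Your probability argument gives total weight $1$ only when the set of occupied positions of the bottom row is fixed. Summed over all $\mu$, one gets $\binom nm$. For example, in the restricted case $\sum_{\rho:\rho_j=0}a^\eta_\rho=1$ for each $j$ by \cref{prop:Pone}, so $\sum_\rho a^\eta_\rho=n$.

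\textbf{What you actually need.} For $\mu$ of content $\lambda$, show that $\sum_\nu c^\mu_\nu=g(B)$ depends only on the support $B$ of $\mu$. In the examples above, $g(B)=\prod_{i\in B}\bigl(x_i-t^{\#(B^c\cap\{i+1,\dots,n\})}/t^{n-1}\bigr)$. Step 2 with fixed support then gives $\sum_{\mu,\nu}a^\eta_\mu c^\mu_\nu=\sum_B g(B)$. The correctly read single-column case identifies $\sum_B g(B)=\sum_\nu F^*_\nu=e^*_m$. With that lemma your induction goes through. This label-independence of the signed layer is the real content of the inductive step. It is needed for arbitrary content: repeated labels, and several vacancies, where $t^{\emp(p)}$ enters the weights. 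Your recoloring sentence asserts it but does not prove it.

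\textbf{Two side remarks.}
\begin{itemize}
\item The bookkeeping you flag is harmless. Label-$1$ strands start in row $1'$, so the $\eta$ in \cref{lem:F_decomposition} have no parts equal to $1$, and $\eta\mapsto\eta^-$ is a bijection onto $S_n(\lambda^-)$.
\item The fallback is unusable. The identity $\sum_{\rho:\phi(\rho)=\eta}F^*_\rho=F^*_\eta$ is false, since \cref{thm:weak-reordering} carries the factor $P^*_\lambda/P^*_\kappa$. Moreover, in this paper both \cref{thm:weak-reordering} (through \cref{thm:E_partition}) and any $F^*$-identity extracted from \cref{cor:lumping} (through \cref{thm:main} and \cref{cor:eta_minus}) rely on the factorization you are trying to prove, so using them would be circular.
\end{itemize}
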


\subsection{The weak reordering property}
Fix a weakly order-preserving function $\phi:\NN\rightarrow\NN$.  Fix two partitions $\lambda$ and $\kappa$ such that $\phi(\lambda)=\kappa$.
For $\eta\in S_n(\kappa)$, define 
$$G^*_\eta(\bfx;t):=\sum_{\rho\in S_n(\lambda):\, \phi(\rho)=\eta} F^*_\rho(\bfx;1,t).$$ Let $G_{\eta}$ be the top homogeneous part of $G^*_\eta$.

The following is an analogue of \cite[Theorem 4.18]{AyyerMartinWilliams2025}. 
\begin{thm}\label{thm:weak-reordering}
    Fix $\lambda$ and $\kappa$ as above. For all $\eta\in S_n(\kappa)$, we have at $q=1$
    $$\frac{G^*_\eta(\bfx;t)}{P^*_\lambda(\bfx;1,t)}=\frac{F^*_\eta(\bfx;1,t)}{P^*_\kappa(\bfx;1,t)}.$$
\end{thm}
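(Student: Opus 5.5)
Since $\phi$ can be written as a composition of elementary weakly order-preserving maps, each of which merges two consecutive values appearing in $\lambda$, and both sides of the identity behave multiplicatively under composition, I will reduce to the case where $\phi$ merges a single pair of adjacent part sizes. Concretely, writing $\lambda=(L^{m_L},\dots,0^{m_0})$, the elementary case identifies the value $r$ with $r-1$, or more generally maps all values $\geq r$ down by one. The last form is cleaner: then $\lambda'$ and $\kappa'$ differ by deleting exactly one column. Indeed, each part $\lambda_i$ equal to $r$ or larger loses one box, so the column $\lambda'_r$ is removed. By \cref{thm:factorization} this gives
\[
P^*_\lambda(\bfx;1,t)=P^*_\kappa(\bfx;1,t)\,e^*_{\lambda'_r}(\bfx;t),
\]
so the claim becomes $G^*_\eta=e^*_{\lambda'_r}\,F^*_\eta$ at $q=1$. (A map such as $r\mapsto r-1$ alone is a composition of such shifts and their inverses applied to the largest values, so I will check that the needed maps are generated this way. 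Otherwise I will treat the general merge by the same argument.)

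The core step is to prove $G^*_\eta = e^*_{\lambda'_r}F^*_\eta$ by induction on the largest part, using the recursion \cref{lem:F_decomposition}. I expand each $F^*_\rho$ with $\phi(\rho)=\eta$ as $\sum_{\theta}F^{*\theta}_\rho F^*_{\theta^-}$. I will show that the two-line queue kernels $F^{*\theta}_\rho=\sum_\mu a^\theta_\mu c^\mu_\rho$ are compatible with recoloring, namely
\[
\sum_{\rho:\phi(\rho)=\eta}F^{*\theta}_\rho=F^{*\phi(\theta)}_\eta \quad\text{for } \theta \text{ fixed}.
\]
This is the algebraic counterpart of the lumping statement in \cref{prop:lumping}: classical queues lump by \cite{AyyerMartinWilliams2025}, and signed queues lump by reading balls in decreasing order of their labels. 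Grouping over $\theta$ with fixed $\phi(\theta)$, and noting that $\theta^-$ recolors compatibly via the shifted map $\phi^-$, reduces the problem to the same statement for $\lambda^-$. There is one exception: when $r=1$, the shift is the map collapsing $1$ to $0$. That base situation I handle using \cref{lem:P_column}, where the single-column polynomials $F^*_\mu$ factor explicitly and $\sum_\mu F^*_\mu=e^*_m$. The case $r=1$ needs an extra argument showing that the bottom layer separates as the factor $e^*_{\lambda'_1}$. For this I would invoke the factorization at the level of multiline queues: summing over the row-1 content of balls labeled $1$ yields the column factor.

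The main obstacle is the recoloring identity for signed two-line queues. Signed pairings come with signs and skip conventions depending on comparisons below, and merging labels changes which balls count as "skipped". My first attempt is to prove it in the distinct-parts setting via the unsigned model of \cref{lem:c}, where the weights are manifestly local. I would also use the probabilistic interpretation: $c^\mu_\rho$ normalized by ball weights should equal the Step 2 transition probabilities, and the recoloring of Step 2 is exactly the second identity in the proof of \cref{prop:lumping}. If that route fails, a fallback is to check the identity via the vanishing characterization: show that $G^*_\eta/e^*_{\lambda'_r}$ is a polynomial of the right degree, with the right vanishing at $\overline\nu$ and the right normalization. This requires a divisibility argument that I expect to be delicate.
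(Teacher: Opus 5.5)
\textbf{There is a genuine gap.} Your reduction to elementary shifts is sound: both sides are multiplicative under composition of recolorings, and for the shift at $r$ you have $P^*_\lambda=e^*_{\lambda'_r}P^*_\kappa$ by \cref{thm:factorization}. The gap is exactly where the content of the theorem lies.

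Your kernel-lumping step lowers $r$ by one each time, so every case is pushed down to $r=1$, i.e.\ $\phi(i)=\max(i-1,0)$. There you need
\[
\sum_{\rho:\,\rho^-=\eta}F^*_\rho(\bfx;1,t)=e^*_{\lambda'_1}(\bfx;t)\,F^*_\eta(\bfx;1,t).
\]
When $\lambda$ has no part equal to $1$, the left side has a single term, and this is precisely \cref{cor:eta_minus}. So ``summing over the row-$1$ content of balls labeled $1$'' cannot be what produces the factor $e^*_{\lambda'_1}$: there is nothing to sum, yet the identity is far from trivial.

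The recursion of \cref{lem:F_decomposition} expresses $F^*_\rho$ through the polynomials $F^*_{\theta^-}$ attached to the row-$2$ configurations $\theta$, not through $F^*_{\rho^-}$. In the restricted case, the relation $F^*_{\theta^-}=F^*_\theta/e^*_{n-1}$ is exactly what converts that recursion into the stationarity equation behind \cref{thm:main}. The paper obtains that relation \emph{from} \cref{thm:weak-reordering}, not from multiline queues. So multiline queues alone do not supply your $r=1$ case; you need an independent input.

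In the paper that input is algebraic:
\begin{itemize}
\item Dehomogenizing \cref{prop:hom_G} gives $G^*_\kappa=E^*_{\inc(\lambda,\phi)}(\bfx;1,t)$.
\item The Knop--Sahi recurrence (\cref{thm:KS_recurrence}) at $q=1$ peels off a factor $(x_1-t^{1-n})$ together with a cyclic shift of the variables. This yields \cref{thm:recoloring}: the ratio $E^*_{\inc(\lambda,\phi)}/E^*_\kappa$ is the symmetric function $e^*_{\lambda'}/e^*_{\kappa'}$.
\item Hecke operators then transport this from $\kappa$ to every $\eta\in S_n(\kappa)$.
\end{itemize}
Nothing in your proposal plays the role of the Knop--Sahi recurrence.

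Your fallback does not close the gap either. At $q=1$ the points $\overline\nu=(t^{-k_1(\nu)},\dots,t^{-k_n(\nu)})$ depend only on the relative order of the entries of $\nu$. There are only finitely many of them, so vanishing conditions no longer characterize $F^*_\eta(\bfx;1,t)$. At generic $q$, where they do characterize it, the identity is false, since $P^*_\lambda$ does not factor. Already for $n=2$, $P_{(2,0)}$ is divisible by $P_{(1,0)}=x_1+x_2$ only when $q=1$.

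A secondary point concerns the kernel-lumping identity $\sum_{\rho:\phi(\rho)=\eta}F^{*\theta}_\rho=F^{*\phi(\theta)}_\eta$. Your Step~2 argument supports it only for contents with distinct parts and a single vacancy, where \cref{lem:c} and \cref{prop:Ptwo} identify $c^\mu_\rho$ with transition probabilities. Your induction needs it for contents with repeated parts and several vacancies, where no such identification is available, so it would need its own proof.
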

The proof of this theorem is very similar to the one given in \cite{AyyerMartinWilliams2025} in the homogeneous case, using results from \cite{AlexanderssonSawhney2019} on {nonsymmetric  Macdonald polynomials}. For completeness, we give a proof of this result in \cref{app:shape_permuting,app:permuted_basement}.

\begin{cor}\label{cor:eta_minus}
    Consider a composition $\rho$ with $\rho_i\neq 1$ for any $1\leq i\leq n$. Let $k$ be the number of non-zero parts of $\rho$. Set $\eta=\rho^-$.
    We then have at $q=1$,
$$F^*_{\rho}(\bfx;1,t)=F^*_{\eta}(\bfx;1,t)\cdot e^*_{k}(\bfx;t).$$
\end{cor}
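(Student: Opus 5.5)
The natural route is to apply \cref{thm:weak-reordering} with the map $\phi(i)=\max(i-1,0)$. This map is weakly order-preserving and sends $\rho$ to $\rho^-=\eta$. Let $\lambda$ be the partition obtained by sorting $\rho$, and put $\kappa:=\phi(\lambda)$, so that $\kappa$ is the sorted version of $\eta$. Then \cref{thm:weak-reordering} gives
$$F^*_\eta(\bfx;1,t)=\frac{P^*_\kappa(\bfx;1,t)}{P^*_\lambda(\bfx;1,t)}\,G^*_\eta(\bfx;t),\qquad G^*_\eta=\sum_{\rho'\in S_n(\lambda):\,\phi(\rho')=\eta}F^*_{\rho'}(\bfx;1,t).$$
The proof then has two remaining tasks: identify the sum $G^*_\eta$, and compute the ratio of interpolation Macdonald polynomials.

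\textbf{Identifying $G^*_\eta$.} The hypothesis $\rho_i\neq 1$ makes the sum collapse. Since no part of $\lambda$ equals $1$, the map $\phi$ restricted to the set of parts of $\lambda$ is injective: only the values $0$ and $1$ collide under $\phi$, and $1$ never occurs. So from $\phi(\rho')=\eta$ we recover $\rho'$ uniquely. Wherever $\eta_i=0$ we must have $\rho'_i=0$, because $\rho'_i=1$ is impossible for $\rho'\in S_n(\lambda)$. Wherever $\eta_i>0$ we get $\rho'_i=\eta_i+1$. Hence $\rho'=\rho$ is the only term, and $G^*_\eta=F^*_\rho(\bfx;1,t)$.

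\textbf{Computing the ratio.} For this I would use \cref{thm:factorization}. Subtracting $1$ from every nonzero part of $\lambda$ deletes the first column of its Young diagram, so $\kappa'_i=\lambda'_{i+1}$. The first column of $\lambda$ has length $\lambda'_1=k$, the number of nonzero parts of $\rho$. The factorization therefore gives
$$P^*_\lambda(\bfx;1,t)=e^*_k(\bfx;t)\,P^*_\kappa(\bfx;1,t).$$
Substituting this and $G^*_\eta=F^*_\rho(\bfx;1,t)$ into the displayed identity gives $F^*_\eta=F^*_\rho/e^*_k$, which is the claim. Multiplying through by $e^*_k$ avoids any issue with division, since these are nonzero rational functions.

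The only point needing care is the hypotheses of \cref{thm:weak-reordering}. Here $\phi(0)=0$ holds, and the edge case $\kappa=(0^n)$ is fine, since then $P^*_\kappa=1$. I expect no real obstacle: the substance of the argument lies in \cref{thm:weak-reordering}, which we are allowed to assume.
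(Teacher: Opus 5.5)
Your proposal is correct and matches the paper's proof step for step. You use the same map $\phi(i)=\max(i-1,0)$ and collapse $G^*_\eta=F^*_\rho$ because no part equals $1$. You then apply the weak reordering theorem and use the factorization $P^*_\lambda(\bfx;1,t)=e^*_k(\bfx;t)\,P^*_\kappa(\bfx;1,t)$, obtained by deleting the first column of $\lambda$.
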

\begin{proof}
Let $\lambda$ and $\kappa$ be the two partitions obtained by reordering $\rho$ and $\eta$, respectively.
  Consider the weakly order-preserving function $\phi:i\mapsto \max(i-1,0)$. We then have $\phi(\rho)=\eta$. Since $\lambda$ does not have parts of size 1, and $\phi$ is bijective from $\{0,2,3,\dots\}$ to $\{0,1,2,\dots\}$, then $\rho$ is the unique composition in $S_n(\lambda)$ such that $\phi(\rho)=\eta$ and we have $G^*_\eta=F^*_\rho$. It follows then from \cref{thm:weak-reordering} that
  $$\frac{F^*_{\rho}(\bfx;1,t)}{P^*_{\lambda}(\bfx;1,t)}=\frac{F^*_{\eta}(\bfx;1,t)}{P^*_{\kappa}(\bfx;1,t)}.$$
We then use the factorization property of the interpolation symmetric polynomials (\cref{thm:factorization_property}) and the fact that $\kappa$ is obtained from $\lambda$ by removing the largest column (of size $k$), to get that
$$\frac{P^*_{\lambda}(\bfx;1,t)}{P^*_{\kappa}(\bfx;1,t)}=e^*_{k}(\bfx;t)$$
which implies that $F^*_{\rho}(\bfx;1,t)=F^*_{\eta}(\bfx;1,t)\cdot e^*_{k}(\bfx;t).$
\end{proof}

\section{The proof of the main theorem}\label{sec:proof}
\subsection{The proof of \texorpdfstring{\cref{thm:main}}{the main theorem} for restricted partitions}

\begin{definition}\label{def:restricted}
    A \emph{restricted composition}
    (respectively, a \emph{restricted partition}) $\mu=(\mu_1,\dots,\mu_n)$ is a composition (respectively, partition)
    which has all parts distinct, a unique part of size $0$, and no part of size $1$.
\end{definition}

\emph{In this section we will assume that each $\lambda$ is a restricted partition as in \cref{def:restricted}.}
\subsubsection{The interpolation \texorpdfstring{$t$}{t}-Push TASEP when $\lambda$ is restricted}

The following lemma relates the transition probabilities of the classical $t$-Push TASEP (or equivalently \ref{Step1} in \cref{def:intpushTASEP}) to the generating function of two-line queues.

\begin{prop}[{\cite[Lemma 5.4]{AyyerMartinWilliams2025}}]\label{prop:Pone}
Fix $\mu,\rho\in S_n(\lambda)$, and let $j$ be the index such that $\rho_j=0$.  Then the following quantities are equal:
\begin{itemize}
\item the probability that in 
Step 1 of \cref{def:intpushTASEP} (i.e. the $t$-Push TASEP) we transition from $\mu$ to $\rho$ when the bell rings at site $j$ in $\mu$,
\item the weight generating function of classical
two-line queues with top row $\mu$ and bottom row $\rho$.
\end{itemize}
That is,
$$\PPone{_j}(\mu,\rho)=a^\mu_{\rho}.$$
\end{prop}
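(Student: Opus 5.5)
We will prove that $\PPone{j}(\mu,\rho)=a^\mu_\rho$ by building an explicit bijection between the trajectories of the $t$-Push TASEP in Step~1 and the classical two-line queues in $\mathcal{Q}^\mu_\rho$, and then matching the weights. Since $\lambda$ is restricted, every label occurs exactly once. In particular there is exactly one vacancy in $\mu$ and one in $\rho$, so a two-line queue $Q\in\mathcal{Q}^\mu_\rho$ is determined by its pairings.

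\textbf{Step 1: the structure of $Q$.} Each ball of label $a$ in the top row (at position $i$ with $\mu_i=a$) must be paired with the unique ball of label $a$ in the bottom row (at position $i'$ with $\rho_{i'}=a$). So $\mathcal{Q}^\mu_\rho$ has at most one element, with strands given by $i\mapsto i'$. The condition in \cref{def:two-row}~\eqref{GMLQ:item1} needs care, since a top ball $a$ may not sit above a smaller bottom ball. We will check that this condition holds exactly when $\rho$ is reachable from $\mu$ by a push chain started at $j$. We then compare the queue weight with the probability of the unique corresponding push chain.

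\textbf{Step 2: the push chain.} The chain starts at site $j$ with label $a_1=\mu_j$. It moves to the site $j_2$ of a weaker particle, with label $a_2=\mu_{j_2}<a_1$, then to $j_3$ with $a_3<a_2$, and so on until it reaches the vacancy. The resulting $\rho$ satisfies $\rho_j=0$ and $\rho_{j_{k+1}}=a_k$, and $\rho_i=\mu_i$ at all other sites. Conversely, $\rho$ determines the chain. So the particles that move are exactly those whose strands in $Q$ are nontrivial, and these labels are strictly decreasing along the chain.

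\textbf{Step 3: matching weights.} Suppose particle $a_k$ is active. At that moment, the sites holding weaker particles are exactly those whose current labels are less than $a_k$. In $Q$ the strands are placed in decreasing order of label, so when strand $a_k$ is placed, the free bottom balls are the bottom positions whose $\rho$-labels are smaller than $a_k$, together with the vacancy. We must check that this set of free positions, taken cyclically after column $j_k$, coincides with the weaker-particle sites seen by the active particle. The only sites that differ between the two configurations lie on the chain, and there the labels are reassigned monotonically, so the two sets should agree. Granting this, we have $\free(p)=m$ and $\skipped(p)=k-1$, where the active particle chooses the $k$th weaker site. The pairing weight is then
\[
(1-t)\frac{t^{k-1}}{1-t^m}=\frac{t^{k-1}}{[m]_t},
\]
which is the push probability. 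The one exception is the trivial case $\mu=\rho$ near $j$, which needs separate handling.

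\textbf{Main obstacle.} The hard step is the exact identification of the free bottom balls with the weaker particles at each stage of the chain. This includes the cyclic wrapping and the effect of balls already placed by higher strands. Since this is precisely \cite[Lemma 5.4]{AyyerMartinWilliams2025}, we would cite it. Because our $\lambda$ has distinct parts, our setting is a special case of theirs at $q=1$.
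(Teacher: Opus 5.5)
Your approach is the paper's: a unique queue in $\mathcal{Q}^\mu_\rho$, a unique push chain, and a strand-by-strand match of $t^{\skipped(p)}/[\free(p)]_t$ with the push probability. However, the one step that carries the content is only ``granted''. Falling back on \cite[Lemma 5.4]{AyyerMartinWilliams2025} is circular, since that lemma is exactly the statement being proved.

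The set you do write down is also not the free set. When the strand of label $a_k$ is placed, the free bottom balls are at the positions $i$ with $0<\rho_i\le a_k$. This set includes the target ball of label $a_k$ and excludes column $j$, because the bottom row is empty there. Your set ($\rho$-labels $<a_k$ together with the vacancy) has the right size, but it is wrong at $j$ and at the target, and it does not coincide with the weaker sites. For example, take $\lambda=(4,3,2,0)$, $\mu=(0,4,2,3)$, $j=2$ and $\rho=(2,0,3,4)$. The strand of label $3$ goes from column $4$ to column $3$ by wrapping. Your set counts columns $1$ and $2$ as skipped, giving $t^2/[2]_t$, whereas the push probability and the true queue weight are both $t/[2]_t$.

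The missing check is short. Write the chain as $j=j_1,j_2,\dots,j_{\ell+1}$ with $\mu_{j_r}=a_r$ and $a_1>\dots>a_\ell>a_{\ell+1}=0$, so that $\rho_{j_1}=0$ and $\rho_{j_r}=a_{r-1}$ for $r\ge 2$. Because all moves are simultaneous, the weaker sites for the active particle $a_k$ are $W_k=\{i:\mu_i<a_k\}$. Site $j$ is not yet a vacancy, which is why the paper's example has $[4]_t$ rather than $[5]_t$ for the second move. Now $W_k=\{i:0<\rho_i\le a_k\}$, checked in three cases:
\begin{itemize}
\item off the chain, $\mu_i=\rho_i\notin\{0,a_k\}$, so the two conditions agree;
\item at $j_1$, both conditions fail;
\item at $j_r$ with $r\ge 2$, both conditions reduce to $r\ge k+1$.
\end{itemize}
Neither set contains $j_k$. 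Hence $\free(p)=m$, and $\skipped(p)=s-1$ when $j_{k+1}$ is the $s$th weaker site clockwise from $j_k$.

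You should also supply the converse you promised, that every $Q\in\mathcal{Q}^\mu_\rho$ comes from a chain. Condition (3) of \cref{def:two-row} gives $\mu_i\le\rho_i$ for all $i\ne j$. Following labels from $\mu_j$ therefore produces a strictly decreasing chain that ends at the vacancy of $\mu$. Off the chain, $\mu\le\rho$ pointwise with equal label multisets, which forces $\mu_i=\rho_i$ there. Your vaguely stated ``trivial case'' is just $\mu_j=0$: then $\rho=\mu$, and both sides equal $1$.
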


To help prepare the reader for the proof of \cref{prop:Ptwo}, we 
will prove \cref{prop:Pone} here.  The main idea  is that
a two-line queue encodes a transition of the $t$-Push TASEP.  For instance, \cref{fig:two_line_queue_restricted} encodes 
Step 1 in the transition illustrated in \cref{fig:Markov_chain_2}:
the top row encodes the initial state, each nontrivial pairing encodes a particle which moves, and 
the bottom row encodes the state after Step 1.

\begin{figure}[t]
    \centering
    \includegraphics[width=0.5\linewidth]{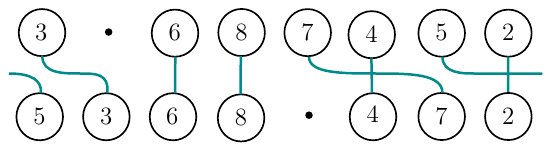}
    \caption{The (classical) two-line queue encoding Step 1 in the transition of \cref{fig:Markov_chain_2}.}
    \label{fig:two_line_queue_restricted}
\end{figure}

\begin{proof}
Note that since by assumption $\mu$ and $\rho$ are restricted compositions, they uniquely determine the pairings of the balls in a two-line queue whose top row is $\mu$ and bottom row is $\rho$.  Thus, there is at most one two-line queue with top row $\mu$ and bottom row $\rho$.  There exists one such queue if and only if when we pair the balls with the same labels, the bottom ball in a nontrivial pairing always has a ball with a smaller label directly above it.
 This condition is equivalent to the statement (*) that if we write the composition $\rho$ underneath the composition $\mu$, then for each position $i\neq j$ such that $\mu_i \neq \rho_i$, we have $\mu_i < \rho_i$. Moreover, when we have such a two-line queue,
its weight is the product of the pairing weights, each of which have the form 
$$\frac{(1-t)t^{\skipped(p)}}{1-t^{\free(p)}} = \frac{t^{\skipped(p)}}{[\free(p)]_t}.$$

Meanwhile, there is a unique series of ball displacements 
that must occur in order to realize a transition
in the $t$-Push TASEP from $\mu$ to $\rho$: namely, 
if $b>b^{(1)} > \dots > b^{(\ell)}$ are the letters in 
$\mu$ which appear in different locations in $\rho$, 
where $b$ is the ball at location $j$ in $\mu$, then 
first the ball $b$ must travel to the location of ball
$b^{(1)}$ (settling in that position and displacing 
the ball labeled $b^{(1)}$); then the ball $b^{(1)}$
must travel to the location of ball $b^{(2)}$ (settling in that position and displacing the ball labeled $b^{(2)}$); and so on.  If the resulting configuration is $\rho$, then there is a nonzero probability of transitioning from $\mu$ to $\rho$. 
Note e.g. that if $j_1$ is the location of ball $b^{(1)}$ in $\mu$,
then we are saying that $\mu_{j_1} = b^{(1)}$ and $\rho_{j_1}=b > \mu_{j_1}$ (and similarly for subsequent balls), which is equivalent to the statement (*) above.  Thus, we can construct a two-line queue whose pairings encode the sequence of ball displacements.
Moreover, note that in Step 1 of \cref{def:intpushTASEP},
the probability of ball $b$ displacing ball $b^{(1)}$ 
is $\frac{t^{k-1}}{[m]_t}$, when there are $m$ weaker particles and $b^{(1)}$ is the $k$th such particle that $b$ encounters when it travels clockwise; this exactly corresponds to the statistic 
$\frac{(1-t)t^{\skipped(p)}}{1-t^{\free(p)}} = \frac{t^{\skipped(p)}}{[\free(p)]_t}$ 
in the corresponding two-line queue.
\end{proof}

Recall that \cref{fig:two_line_queue_restricted} shows the two-line queue corresponding to Step 1 in the transition of \cref{fig:Markov_chain_2}, where $j=5$, $\mu=(3,0,6,8,7,4,5,2)$ and $\rho=(5,3,6,8,0,4,7,2)$. One then checks that 
$$\PPone{j}(\mu,\rho)=\frac{t}{[6]_t}\times\frac{t}{[4]_t}\times \frac{1}{[2]_t}=a^\mu_\rho.$$

\begin{remark}\label{rmk:subsets}
Each possible transition of the t-Push TASEP and each instance of Step 2 above can be indexed by 
choosing a subset of positions, where the corresponding particles have distinct labels.  For the $t$-Push TASEP, once we have the subset of positions, we know that the particles have to get bumped in decreasing order of their labels.  And in Step 2, the particles get bumped from left to right.
\end{remark}

We now give an analogous result relating the transition probabilities of \ref{Step2} of \cref{def:intpushTASEP}  to the generating function for signed two-line queues.

\begin{prop}\label{prop:Ptwo}
     Fix $\rho,\nu\in S_n(\lambda)$, and let $j$ be the index such that $\rho_j=0$. We have 
     $$\PPtwo{j}(\rho,\nu)=\frac{c^\rho_\nu}{\prod_{k<j}\left(x_k-\frac{1}{t^{n-2}}\right)\prod_{k>j}\left(x_k-\frac{1}{t^{n-1}}\right)},$$
     or equivalently,
     $$ P_j\cdotp \PPtwo{j}(\rho,\nu)=\frac{c^\rho_\nu}{e_{n-1}^*},$$
     where $c^\rho_\nu$ is the coefficient from \cref{eq:def_c} and \cref{lem:c}, i.e. 
     the generating function for the set $\Gbar_{\nu}^\rho$.
\end{prop}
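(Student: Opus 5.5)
Since $\lambda$ is restricted, both sides are governed by a single combinatorial object, so the plan is a direct bijective comparison. By \cref{lem:c} and the remark following it, $\Gbar^\rho_\nu$ is either empty or a single unsigned paired ball system $\Qbar$, and $c^\rho_\nu=\wt(\Qbar)$. On the Markov chain side, \cref{rmk:subsets} says a Step 2 transition from $\rho$ to $\nu$ is determined by the set of sites where the content changes. That is a sequence of positions $k_1<k_2<\dots<k_\ell<j$ such that the $0$ moves from $j$ to $k_1$, the particle $\rho_{k_1}$ moves to $k_2$, and so on, with $\rho_{k_\ell}$ landing at $j$. So I first show that the supports agree. $\PPtwo{j}(\rho,\nu)\neq 0$ exactly when $\nu$ is obtained from $\rho$ by such a left-to-right cyclic shift along $k_1<\dots<k_\ell$ (with $j$ at the end). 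I then check that this is exactly the condition for $\Gbar^\rho_\nu\neq\emptyset$. Concretely, in $\Qbar$ with top row $\rho$ and bottom row $\nu$, strands go straight down or strictly left to right without wrapping. The top ball at $j$ is a vacancy, and the bottom vacancy is at $k_1$. So the nontrivial strands are $k_1\to k_2\to\cdots\to k_\ell\to j$, read as top position to bottom position. One must verify that conditions \cref{GSMLQ:item2,GSMLQ:item3} impose no further restriction after forgetting signs, since the possible labels below a ball are arbitrary once signs are free; this is the content of the proof of \cref{lem:forget_signs}.

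Next I compute both weights factor by factor on this common support. For the chain, I use the ``skip or settle'' description. The traveling particle with label $a$ starts at position $1$ and passes sites $k=1,\dots,j-1$. Every site $k$ it passes without settling contributes $1-\mfp_k$ or $1-\mfq_k$, according to whether $b\ge a$ or $b<a$. Settling at $k_i$ contributes $\mfp_{k_i}$ or $\mfq_{k_i}$. The first traveler has $a=0$, so all sites give $b\ge a$. After settling at $k_i$, the new active particle has label $\rho_{k_i}$ and resumes at $k_i+1$. Hence every site $k<j$ is visited exactly once, and $\PPtwo{j}(\rho,\nu)$ is a product over $k<j$ of one factor. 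Multiplying by $\prod_{k<j}(x_k-t^{-n+2})$ and using \eqref{eq:1minuspk} turns each factor into one of four expressions: $x_k-t^{-n+1}$ (skip, $b\ge a$), $t x_k-t^{-n+2}$ (skip, $b<a$), $t^{-n+1}(1-t)$ (settle, $b\ge a$), or $(1-t)x_k$ (settle, $b<a$).

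The main step is matching this with $\wt(\Qbar)\big/\prod_{k>j}(x_k-t^{-n+1})$. Columns $k>j$ carry only trivial pairings, so each contributes exactly $x_k-t^{-n+1}$ by \eqref{eq:weights_unsigned_balls}; this cancels the factor for $k>j$. For $k<j$, I distribute the pairing weights $(1-t)t^{\skipped(p)}$ among the columns.
\begin{itemize}
\item A column $k$ that is a settling point $k_i$ has top ball $\rho_{k_i}$ sitting over the bottom ball $\nu_{k_i}$, which is the earlier traveler's label $a$. Its ball weight is $x_k$ if $a>\rho_k$ and $t^{-n+1}$ if $a<\rho_k$. Together with the factor $(1-t)$ of the incoming strand, this gives $(1-t)x_k$ or $(1-t)t^{-n+1}$, matching $\mfq$-settle and $\mfp$-settle respectively. (The first settle, with $a=0$, is the case $b\ge a$.)
\item A column $k$ skipped by the strand carrying label $a$ has $\rho_k=\nu_k=b$, so it is trivially paired with ball weight $x_k-t^{-n+1}$. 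If $b<a$ it is a skipped free ball contributing an extra $t$, giving $t x_k-t^{-n+2}$. If $b>a$ no $t$ is added and the weight is $x_k-t^{-n+1}$.
\end{itemize}
The delicate bookkeeping is checking the reading-order subtleties. Strands are read by decreasing label, so one must confirm that $\skipped(p)$ counts exactly the columns $b<a$ strictly between the endpoints, using the remark that $\emp(p)=0$ when $m_0=1$. One must also handle the column where the vacancy sits and the columns below position $k_1$, which are skipped by the label-$0$ traveler and are always in the case $b\ge a$. I expect this verification to be the main obstacle, but it reduces to the column-by-column case analysis above.

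The second formula then follows immediately from the definition of $P_j$ in (Step 0).
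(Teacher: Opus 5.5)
Your proposal is correct and follows essentially the paper's own argument: divide $\wt(\Qbar)$ by $\prod_{k<j}(x_k-t^{-n+2})\prod_{k>j}(x_k-t^{-n+1})$, then match column by column. For each column, the ball weight, together with the $(1-t)$ of a nontrivial strand and a factor $t$ for each skipped free ball, should equal the corresponding factor $1-\mfp_k$, $1-\mfq_k$, $\mfp_k$ or $\mfq_k$ of $\PPtwo{j}(\rho,\nu)$. One slip is cosmetic only: attach the $(1-t)$ for a settling column $k_i$ to the strand leaving the top ball of column $k_i$, as the paper does, not to the incoming strand (column $k_1$ has the vacancy below it and no incoming strand), and this does not change the total product.
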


\begin{figure}
    \centering
    \includegraphics[width=0.5\linewidth]{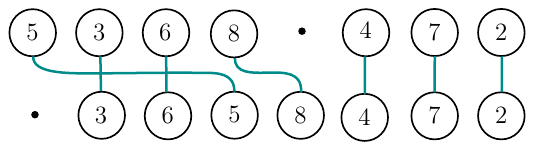}
    \caption{The element of $\Gbar^{(5,3,6,8,0,4,7,2)}_{(0,3,6,5,8,4,7,2)}$ (see \cref{def:Gbar})  encoding the Step 2 of the transition of \cref{fig:Markov_chain_2}.}
    \label{fig:unsigned_two_line_queue_restricted}
\end{figure}
As in the proof  of \cref{prop:Pone}, the main idea is that a signed two-line queue encodes Step 2 of the 
interpolation $t$-Push TASEP.  For instance, 
\cref{fig:unsigned_two_line_queue_restricted} encodes Step 2 
of the transition of \cref{fig:Markov_chain_2}.

\begin{proof}
 Note that \ref{Step2} of \cref{def:intpushTASEP} is encoded by 
an element of a set $\Gbar^{\rho}_{\nu}$ (see \cref{def:Gbar}).  
Indeed, the transition in \ref{Step2} from the configuration $\rho$ to the configuration $\nu$ is possible if and only there is an
element $\Qbar$ in $\Gbar^{\rho}_\nu$ (recall that this set contains at most one element).
More precisely,  a particle labeled $a>0$ which moved from position $k\in \llbracket n\rrbracket$ to a position $k'$, corresponds to a non trivial pairing in $\Qbar$ connecting a ball labeled $a$ in column $k$ of the top row to a ball labeled $a$ in column $k'$ of the bottom row. Particles which do not move correspond to trivial pairings. 

We now claim that $\wt(\Qbar)$ divided by 
$D:=\prod_{k<j}\left(x_k-\frac{1}{t^{n-2}}\right)\prod_{k>j}\left(x_k-\frac{1}{t^{n-1}}\right)$ gives $\PPtwo{j}(\rho,\nu)$.  We will prove the claim below by showing that each 
ball or pairing weight in $\wt(\Qbar)$,
divided by one of the factors in $D$, equals one of the skipping/ displacement
probabilities from \eqref{eq:probskip} or \eqref{eq:prob} (whose product is $\PPtwo{j}(\rho,\nu)$).  Note that in what follows,
instead of associating the weight 
$(1-t)t^{\skipped(p)}$ to each nontrivial pairing,
we will associate $(1-t)$ to the top ball in each nontrivial pairing,
and a factor of $t$ to each skipped ball.
\begin{itemize}
    \item Each ball in column $k>j$ of $\Qbar$ is necessarily trivially paired, since no ball in position $k>j$ get skipped or displaced in \ref{Step2}.
      In $\Qbar$ this ball gets  weight $x_k-\frac{1}{t^{n-1}}$; when we divide this weight by the $k$th factor of $D$, we get $1$, which corresponds to the fact that balls in position $k>j$ do not contribute to $\PPtwo{j}(\rho,\nu)$.
        \item A ball in $\Qbar$ labeled $b$ in column $k<j$ which is trivially paired, and which is not skipped by a ball $a>b$, also has weight $x_k-\frac{1}{t^{n-1}}$. When we divide this weight by the $k$th factor of $D$, we get $1-\mfp_k$ (see \eqref{eq:1minuspk}).  This is what we desired, because such a trivial pairing in $\Qbar$ corresponds to a particle labeled $b$ which is skipped over by a particle with a smaller label, and hence contributes $1-\mfp_k$
        to $\PPtwo{j}(\rho,\nu)$.
    \item A ball in $\Qbar$ labeled $b$ in column $k<j$ which is trivially paired, and which is skipped by a ball $a>b$, gets a weight $t(x_k-\frac{1}{t^{n-1}})$.  When we divide this weight by the 
    $k$th factor of $D$, we get $1-\mfq_k$
    (see \eqref{eq:1minuspk}).
    This is what we desired, because such a trivial pairing corresponds to a particle labeled $b$ skipped over by a particle with a larger label, and hence contributes $1-\mfq_k$ to $\PPtwo{j}(\rho,\nu)$.
    \item A ball labeled $b$ in the top row of $\Qbar$ in column $k<j$ which has a ball labeled $a<b$ below it gets a weight $(1-t)\frac{1}{t^{n-1}}$    (the factor $(1-t)$ is the nontrivial pairing weight). When we divide this weight by the $k$th factor of $D$, we get $\mfp_k$.  This is what we desired, because this pairing corresponds to a particle labeled $b$ being displaced by a particle with a smaller label, and hence contributing $\mfp_k$ 
    to $\PPtwo{j}(\rho,\nu)$.
    \item A ball labeled $b$ in the top row of 
    $\Qbar$ in column $k<j$ which has a ball labeled $a>b$ below it gets a weight $(1-t)x_k$ (the factor $(1-t)$ is the nontrivial pairing weight). 
     When we divide this weight by the $k$th factor of $D$, we get $\mfq_k$.  This is what we desired, because this pairing
    corresponds to a particle labeled $b$ being displaced by a particle with a larger label, and hence contributing $\mfq_k$ to $\PPtwo{j}(\rho,\nu)$.\qedhere
\end{itemize}
\end{proof}

Recall that \cref{fig:unsigned_two_line_queue_restricted} shows the paired ball system corresponding to Step 2 in the transition of \cref{fig:Markov_chain_2}, where $j=5$, $\rho=(5,3,6,8,0,4,7,2)$ and $\nu=(0,3,6,5,8,4,7,2)$. We then have 
\begin{align*}
  \PPtwo{5}(\rho,\nu)&=\mfp_1\cdotp(1-\mfq_2)\cdotp(1-\mfp_3)\mfp_4\\
  &=\frac{t^{-7}(1-t)}{x_1-t^{-6}}\cdotp \frac{t (x_2-t^{-7})}{x_2-t^{-6}}\cdotp\frac{x_3-t^{-7}}{x_3-t^{-6}}\cdotp\frac{t^{-7}(1-t)}{x_4-t^{-6}}.
\end{align*}
On the other hand, for the paired ball system $\Qbar$ of \cref{fig:unsigned_two_line_queue_restricted}, has ball weight (as given in \cref{eq:weights_unsigned_balls}) 
$$t^{-7}\cdotp (x_2-t^{-7})\cdotp(x_3-t^{-7})\cdotp t^{-7}\cdotp (x_6-t^{-7})\cdotp (x_7-t^{-7}) \cdotp(x_8-t^{-7})$$
and its pairing weight (defined in \cref{eq:pairing_weight_unsigned}) is 
$$(1-t)^2\cdotp t,$$
and $\wt(\Qbar)$ is the product of these two quantities. We then get that
$$\frac{c^\rho_\nu}{\prod_{k<5}\left(x_k-t^{-6}\right)\prod_{k>5}\left(x_k-t^{-7}\right)}
=\frac{\wt(\Qbar)}{\prod_{k<5}\left(x_k-t^{-6}\right)\prod_{k>5}\left(x_k-t^{-7}\right)}=\PPtwo{5}(\rho,\nu).$$

We then get the following proposition connecting the interpolation $t$-Push TASEP model to signed multiline queues.
\begin{prop}\label{prop:intTASEP_SMLQ}
If $\lambda$ is restricted, and $\mu,\nu\in S_n(\lambda)$, then    
\begin{align*}
    \PP(\mu,\nu)=\sum_{\rho\in S_n(\lambda)}  \frac{a^\mu_\rho c^\rho_{\nu}}{e^*_{n-1}}.
\end{align*}
\end{prop}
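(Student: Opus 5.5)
The proposition follows by substituting \cref{prop:Pone} and \cref{prop:Ptwo} into the decomposition of the transition probability given at the end of \cref{def:intpushTASEP}. That decomposition reads
$$\PP(\mu,\nu)=\sum_{1\leq j\leq n}P_j\sum_{\rho\in S_n(\lambda):\,\rho_j=0}\PPone{j}(\mu,\rho)\,\PPtwo{j}(\rho,\nu).$$
Since $\lambda$ is restricted, it has a unique part equal to $0$. So every $\rho\in S_n(\lambda)$ has exactly one index $j=j(\rho)$ with $\rho_j=0$. Consequently the double sum over pairs $(j,\rho)$ with $\rho_j=0$ is the same as a single sum over all $\rho\in S_n(\lambda)$, with $j$ determined by $\rho$. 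This gives
$$\PP(\mu,\nu)=\sum_{\rho\in S_n(\lambda)}\PPone{j(\rho)}(\mu,\rho)\,\Bigl(P_{j(\rho)}\,\PPtwo{j(\rho)}(\rho,\nu)\Bigr).$$

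Next I replace each factor. By \cref{prop:Pone}, applied with $j=j(\rho)$ (the index where $\rho$ vanishes, which is exactly the hypothesis there), we have $\PPone{j(\rho)}(\mu,\rho)=a^\mu_\rho$. By the second form of \cref{prop:Ptwo}, we have $P_{j(\rho)}\PPtwo{j(\rho)}(\rho,\nu)=c^\rho_\nu/e^*_{n-1}$. Substituting both yields the claimed formula
$$\PP(\mu,\nu)=\sum_{\rho\in S_n(\lambda)}\frac{a^\mu_\rho\,c^\rho_\nu}{e^*_{n-1}}.$$

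The only point needing care is the reindexing in the first step, and it is settled by uniqueness of the zero part. It is worth noting one consistency check. The bell position $j$ is where the particle of $\mu$ starts moving in Step 1, and after Step 1 that position holds the vacancy of $\rho$. The restriction $\rho_j=0$ in the decomposition enforces exactly this. So there are no pairs $(j,\rho)$ with $\rho_j\neq 0$ that we might be wrongly discarding; any such term is already excluded by the definition of the transition. Beyond this, no step presents a real obstacle; the content lies in \cref{prop:Pone} and \cref{prop:Ptwo}.
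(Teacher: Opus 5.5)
Your proof is correct and takes the same approach as the paper. The paper also substitutes \cref{prop:Pone} and the second form of \cref{prop:Ptwo} into the defining decomposition of $\PP(\mu,\nu)$. It then collapses the double sum over pairs $(j,\rho)$ with $\rho_j=0$ into a single sum over $\rho\in S_n(\lambda)$, using the fact that a restricted $\lambda$ has a unique part equal to $0$.
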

\begin{proof}
    Combining \cref{prop:Pone} and \cref{prop:Ptwo}, we get
    \begin{align*}
    \PP(\mu,\nu)
    &=\sum_{1\leq j\leq n}P_j\sum_{\rho\in S_n(\lambda):\rho_j=0}\PPone{j}(\mu,\rho)\PPtwo{j}(\rho,\nu)\\
    &=\sum_{1\leq j\leq n}\sum_{\rho\in S_n(\lambda):\rho_j=0}  \frac{a^\mu_\rho c^\rho_{\nu}}{e^*_{n-1}}\\
    &=\sum_{\rho\in S_n(\lambda)}  \frac{a^\mu_\rho c^\rho_{\nu}}{e^*_{n-1}}.\qedhere
\end{align*}
\end{proof}

\subsubsection{The case when \texorpdfstring{$\lambda$}{lambda} is restricted in \cref{thm:main}}
Fix a partition $\lambda$ with distinct parts, with one part of size 0 and no parts of size 1. 
Let $\nu\in S_n(\lambda)$.
From \cref{lem:F_decomposition}, we have
 \begin{equation*}
   F^*_\nu(\bfx;1,t)=\sum_{\eta\in\NN^n}{F^{*\eta}_\nu}(\bfx;t) F^*_{\eta^{-}}(\bfx;1,t), 
 \end{equation*}
 where 
 $$F^{*\eta}_{\nu}(\bfx;t) :=\sum_{\alpha\in\ZZ^n}b_\nu^\alpha \wt_\alpha a^\eta_{\lVert \alpha\rVert}=\sum_{\kappa\in \NN^n}a_\kappa^\eta c^\kappa_\nu .$$
But we know from \cref{cor:eta_minus} that
$$F^*_{\eta^-}(\bfx;1,t)=\frac{F^*_{\eta}(\bfx;1,t)}{e_{n-1}^*(\bfx;t)},$$
we use here the fact that $\eta$ has a unique part of size 0. Hence
\begin{equation*}
   F^*_\nu(\bfx;1,t)=\sum_{\eta\in\NN^n}F^*_{\eta}(\bfx;1,t)\sum_{\kappa\in \NN^n}\frac{a_\kappa^\eta c^\kappa_\nu}{e_{n-1}^*(\bfx;t)},
 \end{equation*}
which can be rewritten using the transition probabilities of the interpolation $t$-Push TASEP (\cref{prop:intTASEP_SMLQ}) we get
\begin{equation*}
   F^*_\nu(\bfx;1,t)=\sum_{\eta\in\NN^n}F^*_{\eta}(\bfx;1,t)\PP(\eta,\nu).
 \end{equation*}
This proves that $F^*_\mu(\bfx;1,t)$ are proportional to the stationary distribution of the interpolation $t$-Push TASEP $\pi^*_\lambda(\mu)$.
Finally, we use \cref{prop:symmetrization} to deduce that $\frac{F^*_\mu(\bfx;1,t)}{P^*_\lambda(\bfx;1,t)}=\pi^*_\lambda(\mu).$

\subsection{Proof of \texorpdfstring{\cref{thm:main}}{the main theorem} for arbitrary partitions}\label{sec:arbitrary}
\begin{proof}[Proof of \cref{thm:main}]
Fix a partition $\kappa=(\kappa_1,\dots,\kappa_n)$, and $\eta\in S_n(\kappa)$.  We want to prove that 
$$\pi_\kappa^*(\eta)=\frac{F^*_\eta(\bfx;1,t)}{P^*_\kappa(\bfx;1,t)}.$$
It is not hard to see that we can find 
a restricted partition $\lambda$ and a weakly order-preserving function $\phi$ such that $\phi(\lambda)=\kappa$.
By \cref{cor:lumping}, we know that
        $$\pi^*_\kappa(\eta)=\sum_{\rho\in S_n(\lambda):\, \phi(\rho)=\eta}\pi^*_{\lambda}(\rho).$$
Since \cref{thm:main} holds for any restricted partition $\lambda$, we get
$$\pi^*_\kappa(\eta)=\sum_{\rho\in S_n(\lambda):\, \phi(\rho)=\eta}\frac{F^*_\rho(\bfx;1,t)}{P^*_\lambda(\bfx;1,t)}.$$
Now from \cref{thm:weak-reordering}, we have
$$\sum_{\rho\in S_n(\lambda):\, \phi(\rho)=\eta}\frac{F^*_{\rho}(\bfx;1,t)}{P^*_\lambda(\bfx;1,t)}
=\frac{G^*_{\eta}(\bfx;t)}{P^*_\lambda(\bfx;1,t)}
=\frac{F^*_\eta(\bfx;1,t)}{P^*_\kappa(\bfx;1,t)}.$$
We then deduce the desired statement
\begin{equation*}
  \pi^*_\kappa(\eta)=\frac{F^*_\eta(\bfx;1,t)}{P^*_\kappa(\bfx;1,t)}.\qedhere  
\end{equation*}
\end{proof}

\section{Density formulas}\label{sec:density}

The following proposition is a consequence of
\cref{thm:main} and  \cref{lem:P_column}.
\begin{prop}\label{prop:01}
Let $\lambda = (1^{m_1}, 0^{m_0})$ be a partition with $n$ parts whose parts are all $1$'s or $0$'s.  The stationary probability $\pi_\lambda^*(\mu)$ of $\mu \in S_n(\lambda)$ for the 
interpolation $t$-Push TASEP is 
$$\pi^*_\lambda(\mu) = 
\frac{F_{\mu}^*(x_1,\dots,x_n; q, t)}{e^*_{m_1}(x_1,\dots,x_n;t)} =
\frac{1}{e^*_{m_1}(x_1,\dots,x_n;t)} \prod_{i\in S_\mu}\left(x_i-\frac{t^{\#S^c_\mu\cap \llbracket i-1\rrbracket}}{t^{n-1}}\right), $$
    where $S_{\mu}:=\{i: \mu_i=1\}.$
\end{prop}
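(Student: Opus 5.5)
This is a direct specialization of \cref{thm:main} to the content $\lambda=(1^{m_1},0^{m_0})$. The plan has three steps.

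First, apply \cref{thm:main} to this $\lambda$. This gives $\pi^*_\lambda(\mu)=F^*_\mu(\bfx;1,t)/P^*_\lambda(\bfx;1,t)$ for every $\mu\in S_n(\lambda)$. Two points need attention here.

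\begin{itemize}
\item The chain in \cref{def:intpushTASEP} assumes at least one part of size $0$, so we take $m_0\geq 1$.
\item The degenerate case $m_0=0$ is covered by \cref{rmq:twin_particles}. There $\lambda$ is replaced by the all-zero partition, the state space is a single point, and both sides of the formula reduce to a single term equal to $1$.
\end{itemize}

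Second, evaluate numerator and denominator using \cref{lem:P_column}. By \eqref{eq:factorization_one_row}, $F^*_\mu(\bfx;q,t)$ equals the stated product over $S_\mu$, and this holds for every $q$, in particular at $q=1$. This is also why the formula may be written with $q$ left general. By \eqref{eq:P_column}, $P^*_{(1^{m_1},0^{m_0})}(\bfx;q,t)=e^*_{m_1}(\bfx;t)$, again independent of $q$. Alternatively, \cref{thm:factorization} gives the same denominator, since $\lambda'=(m_1)$.

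Third, substitute both evaluations into the expression from the first step. This yields the two displayed equalities.

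There is no substantial obstacle. The only points needing care are the boundary cases. One is the convention for $m_0=0$ noted above. The other is $m_1=0$, where $e^*_0=1$ and the empty product is $1$, so the formula is consistent with a trivial chain. Everything else is a direct substitution of earlier results.
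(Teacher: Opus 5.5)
Your proposal is correct and follows the paper's own route: the paper presents \cref{prop:01} as a direct consequence of \cref{thm:main} and \cref{lem:P_column}, which is exactly your substitution of \eqref{eq:factorization_one_row} for the numerator and \eqref{eq:P_column} for the denominator. Your remarks on why the formula can be written for general $q$, and on the degenerate cases $m_0=0$ and $m_1=0$, are accurate and spell out points the paper leaves implicit.
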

Recall that the \emph{density} at site $i$ is the probability of finding a particle at site $i$ in the stationary distribution.
We  use angle brackets 
$\langle \cdot \rangle$ to denote expectations in the stationary distribution.  Let 
$\eta_j$ be the indicator variable for a particle at site $j$, so that 
$\langle \eta_j\rangle$ is the density at site $j$.  

From the definition of density, we obtain the following formula. 
\begin{lem}\label{lem:density}
Let $\lambda = (1^{m_1}, 0^{m_0})$ be a partition with $n$ parts whose parts are all $1$'s or $0$'s.
  The density at state $j$ in the stationary distribution is given by 
  $$\langle \eta_j \rangle =
  \frac{\sum F^*_{\mu}(x_1,\dots,x_n; q,t)}{e^*_{m_1}(x_1,\dots,x_n;t)},$$
  where the sum is over all $\mu\in S_n(\lambda)$ which have a $1$ in site $j$.
\end{lem}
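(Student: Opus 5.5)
The plan is to unwind the definition of density and substitute the stationary distribution supplied by \cref{prop:01}. Since $\eta_j$ is the indicator that site $j$ carries a particle (a $1$), linearity of expectation under $\pi^*_\lambda$ gives
$$\langle \eta_j\rangle=\sum_{\mu\in S_n(\lambda)}\pi^*_\lambda(\mu)\,\mathbbm{1}[\mu_j=1]=\sum_{\mu\in S_n(\lambda):\,\mu_j=1}\pi^*_\lambda(\mu).$$

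Next, I would apply \cref{thm:main} with $\lambda=(1^{m_1},0^{m_0})$, in the explicit form recorded in \cref{prop:01}. This says each summand equals $F^*_\mu(\bfx;q,t)/e^*_{m_1}(\bfx;t)$. The dependence on $q$ is harmless here. By \cref{lem:P_column}, for $\mu\in\{0,1\}^n$ the polynomial $F^*_\mu$ is the explicit product in \eqref{eq:factorization_one_row}, which contains no $q$. Likewise, by \eqref{eq:P_column} the normalizing constant $P^*_\lambda(\bfx;1,t)$ equals $e^*_{m_1}(\bfx;t)$ for every $q$. Hence the value at $q=1$ coincides with the expression for general $q$.

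Finally, I would pull out the common denominator $e^*_{m_1}$, which does not depend on $\mu$. This yields the stated formula, with the sum running over $\mu\in S_n(\lambda)$ having a $1$ in site $j$.

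There is no real obstacle. The only point needing care is the edge case $m_0=0$, where \cref{def:intpushTASEP} assumes a vacancy. By \cref{rmq:twin_particles} that case reduces to the all-zero content, for which the chain is trivial and the density is $1$. This agrees with the formula, because the unique $\mu$ then satisfies $F^*_\mu=e^*_n$.
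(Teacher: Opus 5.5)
Your proposal is correct and follows the paper's approach: the paper derives the lemma ``from the definition of density,'' i.e.\ it writes $\langle\eta_j\rangle$ as the sum of $\pi^*_\lambda(\mu)$ over states with $\mu_j=1$ and substitutes the formula of \cref{prop:01}. Your remarks on the $q$-independence of $F^*_\mu$ for $\mu\in\{0,1\}^n$ and on the degenerate case $m_0=0$ are correct extra care that the paper leaves implicit.
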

Using \cref{prop:01} and \cref{lem:density}, we obtain the following.
\begin{cor}\label{cor:density}
    Let $\lambda = (1^{m_1}, 0^{m_0})$ be a partition with $n$ parts whose parts are all $1$'s or $0$'s.
  The density at state $j=1$ in the stationary distribution is given by 
  \begin{equation}\label{eq:density}
  \langle \eta_1 \rangle =
  \frac{(x_1-\frac{1}{t^{n-1}}) e^*_{m_1-1}(tx_2, tx_3,\dots,tx_n;t)} {t^{m_1-1} e^*_{m_1}(x_1,\dots,x_n;t)},
  \end{equation}
   and the density at state $j=n$ in the stationary distribution is given by 
  $$\langle \eta_n \rangle =
  \frac{(x_n-\frac{t^{m_0}}{t^{n-1}}) e^*_{m_1-1}(tx_1, tx_2, \dots,tx_{n-1};t)} {t^{m_1-1} e^*_{m_1}(x_1,\dots,x_n;t)},$$
\end{cor}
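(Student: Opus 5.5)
The plan is to compute the numerator in \cref{lem:density} directly from the product formula of \cref{prop:01} (that is, \eqref{eq:factorization_one_row} of \cref{lem:P_column}). Then I would recognize the resulting sum as a rescaled elementary interpolation polynomial in $n-1$ variables, using \cref{def:int_elem}. Compositions $\mu\in S_n(\lambda)$ with $\mu_j=1$ correspond bijectively to subsets $S=S_\mu\subseteq\llbracket n\rrbracket$ with $\#S=m_1$ and $j\in S$. So the numerator is
$$\sum_{S\ni j,\ \#S=m_1}\ \prod_{i\in S}\left(x_i-\frac{t^{\#S^c\cap\llbracket i-1\rrbracket}}{t^{n-1}}\right).$$

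For $j=1$, I would first isolate the factor at $i=1$. Since $\llbracket 0\rrbracket=\emptyset$, this factor equals $x_1-\frac{1}{t^{n-1}}$ for every such $S$. Write $S=\{1\}\sqcup S'$ with $S'\subseteq\{2,\dots,n\}$ of size $m_1-1$. Because $1\notin S^c$, for $i\in S'$ the exponent $\#S^c\cap\llbracket i-1\rrbracket$ equals the number of elements of the complement of $S'$ inside $\{2,\dots,n\}$ that precede $i$. This is exactly the exponent appearing in $e^*_{m_1-1}$ in the $n-1$ variables $y_1=x_2,\dots,y_{n-1}=x_n$.

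In that $(n-1)$-variable polynomial the denominator is $t^{(n-1)-1}=t^{n-2}$ rather than $t^{n-1}$. Substituting $y_i\mapsto t y_i$ turns each factor into
$$t y_i-\frac{t^{c}}{t^{n-2}}=t\left(y_i-\frac{t^{c}}{t^{n-1}}\right).$$
Hence $e^*_{m_1-1}(tx_2,\dots,tx_n;t)=t^{m_1-1}\sum_{S'}\prod_{i\in S'}\bigl(x_i-\frac{t^{c_i}}{t^{n-1}}\bigr)$, where $c_i$ denotes the exponent just described. Dividing by $t^{m_1-1}$ and by $e^*_{m_1}$ gives \eqref{eq:density}.

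For $j=n$ the argument is symmetric, with the isolated factor now the last one. For $i=n$, the set $S^c\cap\llbracket n-1\rrbracket=S^c$ has $n-m_1=m_0$ elements, which gives the factor $x_n-\frac{t^{m_0}}{t^{n-1}}$. For $i<n$ in $S$, the exponents involve only positions $1,\dots,n-1$. They therefore coincide with those of $e^*_{m_1-1}$ in the variables $x_1,\dots,x_{n-1}$, and the same rescaling $x_i\mapsto tx_i$ absorbs the shift $t^{n-2}\to t^{n-1}$ at the cost of a factor $t^{m_1-1}$.

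The only delicate step is this bookkeeping of the exponent $\#S^c\cap\llbracket i-1\rrbracket$ under removal of the fixed index, together with the change from $t^{n-1}$ to $t^{n-2}$ in the $(n-1)$-variable definition. Both are handled by the observation that the removed index is never in $S^c$ (for $j=1$) or never precedes $i$ (for $j=n$).
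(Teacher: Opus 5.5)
Your proposal is correct and follows the paper's route: the paper derives the corollary directly from \cref{prop:01} and \cref{lem:density}, and your bookkeeping fills in the omitted computation correctly. That bookkeeping isolates the factor at the fixed index, gives the exponent $m_0$ when $j=n$, and absorbs the change from $t^{n-2}$ to $t^{n-1}$ in the $(n-1)$-variable $e^*_{m_1-1}$ through the rescaling $x_i\mapsto tx_i$, which produces the factor $t^{m_1-1}$.
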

One can actually write an explicit expression for $\langle\eta_j\rangle$ for any $1\leq j\leq n$. Here we only give the expressions for $j=1$ and $j=n$, for which the formulas are slightly simpler.

We will use the notation $\bfx:=(x_1,\dots,x_n)$, $\bfx^{(1)}:=(x_2,\dots,x_n)$,  $t\bfx:=(tx_1,\dots,tx_n)$ and so on.
\begin{definition}\label{def:sstar}
For any partition $\lambda=(\lambda_1,\dots,\lambda_n)$, we define the \emph{$t$-interpolation Schur polynomial $s_\lambda^*(x_1,\dots,x_n;t)=s^*_\lambda(\bfx;t)$} by
    $$s^*_\lambda(\bfx;t)=P^*_\lambda(\bfx;t,t).$$
\end{definition}

\cref{prop:density} below is an analogue of \cite[Corollary 7.1]{AyyerMartinWilliams2025}.  To prove it, we will use the following dehomogenization of the dual Jacob–Trudi identity, which is proved in \cref{app:t_int_Schur}.
\begin{lem}\label{lem:2column}
Let $\lambda=(2^a,1^b)$.  Then we have 
\begin{align*}
s^*_{\lambda}(t\bfx;t)&=e^*_{a+b,n-1}(t\bfx;t)e^*_{a,n-2}(t\bfx;t)-e^*_{a+b+1,n-2}(t\bfx;t)e^*_{a-1,n-1}(t\bfx;t)\\
&=t^a e^*_{a+b}(t\bfx;t) e^*_a(\bfx;t) - t^{a+b+1} e^*_{a+b+1}(\bfx;t) e^*_{a-1}(t\bfx;t).
\end{align*}
\end{lem}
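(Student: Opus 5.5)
The plan is to identify the $t$-interpolation Schur polynomials $s^*_\lambda(\bfx;t)=P^*_\lambda(\bfx;t,t)$ with factorial Schur functions, and then to read off the claimed identity from the dual Jacobi--Trudi formula for factorial Schur functions (Macdonald's ``sixth variation''), specialised to the two-column shape $\lambda=(2^a,1^b)$, whose conjugate is $\lambda'=(a+b,a)$. I am reading the two-index symbols $e^*_{k,m}$ as the elementary interpolation polynomial of \cref{def:int_elem} with the normalising power $t^{n-1}$ replaced by $t^{m}$, so that $e^*_{k,n-1}=e^*_k$.

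\textbf{Step 1: match $s^*_\lambda$ with a factorial Schur function.} At $q=t$ the points $\overline{\nu}$ of \eqref{eq:k2} have coordinates $t^{\nu_i-k_i(\nu)}$. For a partition $\nu$ these are $t^{\nu_i-(n-i)}$, i.e. of the form $x_i=a_{\nu_i+n-i}$ for the geometric sequence $a_m=t^{m-n+1}$, up to the normalisation used in $e^*_k$. Okounkov's characterisation of factorial Schur functions $s_\lambda(\bfx\,|\,a)$ is by the same data: symmetric, top component $s_\lambda$, and vanishing at $x_i=a_{\nu_i+n-i}$ for $|\nu|\le|\lambda|$, $\nu\ne\lambda$. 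Comparing with \cref{def:intMacdonald} at $q=t$, where $P_\lambda(\bfx;t,t)=s_\lambda$ has $[m_\lambda]=1$, uniqueness gives $s^*_\lambda(\bfx;t)=s_\lambda(\bfx\,|\,a)$ for the appropriate sequence $a$.

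In the same way, each $e^*_k(\bfx;t)$ is the factorial elementary function $s_{(1^k)}(\bfx\,|\,a)$. I would check this directly from \cref{def:int_elem}: the factor attached to $i\in S$ is $x_i-t^{\#S^c\cap\llbracket i-1\rrbracket}/t^{n-1}$. Alternatively it follows from \cref{lem:P_column}, since $P^*_{(1^k)}$ does not depend on $q$ and so its value at $q=t$ is both $e^*_k$ and $s^*_{(1^k)}$. The two-index versions $e^*_{k,m}$ then correspond to the shifted sequences $\tau^{r}a$ with $a_m\mapsto a_{m+r}$, that is, replacing $t^{n-1}$ by $t^{n-1-r}$.

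\textbf{Step 2: apply dual Jacobi--Trudi.} Use
$$s_\lambda(\bfx\,|\,a)=\det\bigl(e_{\lambda'_i-i+j}(\bfx\,|\,\tau^{\,j-1}a)\bigr)_{i,j},$$
with the shift convention fixed so that it agrees with Step 1. For $\lambda'=(a+b,a)$ the determinant is $2\times2$, and it equals the difference of the two products appearing in the first line of the lemma. Substituting $\bfx\mapsto t\bfx$ is harmless, because it just rescales the sequence.

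\textbf{Step 3: the second equality.} This is a scaling identity. From the definition,
$$e^*_{k,m}(t\bfx;t)=t^{k}\,e^*_{k,m+1}(\bfx;t),$$
since each factor $tx_i-t^{c}/t^{m}$ equals $t\,(x_i-t^{c}/t^{m+1})$. Applying this to $e^*_{a,n-2}(t\bfx)$ gives $t^a e^*_a(\bfx)$, and applying it to $e^*_{a+b+1,n-2}(t\bfx)$ gives $t^{a+b+1}e^*_{a+b+1}(\bfx)$. The other two factors already carry index $n-1$, so they are $e^*_{a+b}(t\bfx;t)$ and $e^*_{a-1}(t\bfx;t)$.

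\textbf{Main obstacle.} The hard part will be bookkeeping the shift conventions, namely which column is shifted by $\tau$ versus $\tau^{-1}$, so that the indices $n-1$ and $n-2$ land as stated.

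If citing the factorial Jacobi--Trudi identity is undesirable, I would prove the $2\times2$ case directly via the characterisation in \cref{def:intMacdonald} at $q=t$. The right-hand side is symmetric and has degree at most $|\lambda|$. Its top part is $e_{a+b}e_a-e_{a+b+1}e_{a-1}=s_\lambda$, so its $m_\lambda$-coefficient is $1$. It then remains to check vanishing at $\overline{\nu}$ for every partition $\nu\ne\lambda$ with $|\nu|\le|\lambda|$. Using the product formula \eqref{eq:factorization_one_row}, each $e^*_k$ vanishes at $\overline{\nu}$ as soon as $\nu$ has fewer than $k$ nonzero parts. So only $\nu$ with $\ell(\nu)\ge a+b$ survive, and for these $\nu_1\le 2$ up to a few cases, which would be handled by explicit cancellation between the two products.
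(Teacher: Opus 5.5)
Your main route is the paper's own argument. The paper identifies $s^*_\lambda$ with Macdonald's sixth-variation (factorial) Schur function for $a_i=-t^{i-n}$, applies the dual Jacobi--Trudi formula \eqref{eq:int_dual_JT} (which collapses to a $2\times 2$ determinant when $\lambda'=(a+b,a)$), and rescales with \eqref{eq:identity}; it makes the identification via Okounkov's tableau formula (\cref{thm:Okounkov}), while your uniqueness argument from the vanishing characterization works equally well. One small slip: at $q=t$ the points $\overline{\nu}$ for a partition $\nu$ are, up to reordering, $t^{\nu_i-i+1}$ rather than $t^{\nu_i-(n-i)}$, but this is exactly what your $a_{\nu_i+n-i}$ with $a_m=t^{m-n+1}$ evaluates to, so Step 1 is unaffected.
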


\begin{prop}\label{prop:density}
Recall, as in \cref{not:type}, that  $M_i = m_i+m_{i+1} + \dots + m_L.$  Let $s^*_{\lambda}$ be the $t$-interpolation Schur polynomial from \cref{def:sstar}.
   The density of the particle of species $i$ in the first site in 
   the interpolation $t$-Push TASEP with content $\lambda=(L^{m_L}, \dots, 1^{m_1}, 0^{m_0})$ is given by
   \small{
   \begin{align*}
       \langle \eta_1^{(i)} \rangle =&
       \frac{(x_1-\frac{1}{t^{n-1}}) \left(e_{M_i-1}^*\left(t\bfx^{(1)};t\right) e^*_{M_{i+1}}\left(\bfx^{(1)};t\right)  - t^{m_i} e_{M_{i+1}-1}^*\left(t\bfx^{(1)};t\right) e_{M_i}^*\left(\bfx^{(1)};t\right) \right)}{t^{M_{i}-1} e^*_{M_i}\left(\bfx;t\right) e^*_{M_{i+1}}\left(\bfx;t\right)} \\
       =&
       \frac{(x_1-\frac{1}{t^{n-1}}) 
       \ s^*_{(2^{M_{i+1}},1^{m_i-1})}\left(t\bfx^{(1)};t\right)
       }{t^{M_{i+1}+M_i-1} e^*_{M_i}\left(\bfx;t\right)e^*_{M_{i+1}}\left(\bfx;t\right)}.
   \end{align*}}
\end{prop}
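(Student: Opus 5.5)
The plan is to reduce to the two-species case via the recoloring results, and then apply the single-species density formula \cref{cor:density} twice.

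\textbf{Step 1: reduce to a two-species chain.} Let $\phi$ be the weakly order-preserving map sending $\{0,\dots,i-1\}\mapsto 0$, $i\mapsto 1$, and $\{i+1,i+2,\dots\}\mapsto 2$. Then $\phi(\lambda)=\kappa:=(2^{M_{i+1}},1^{m_i},0^{n-M_i})$. By \cref{cor:lumping}, the probability that site $1$ carries species $i$ in the chain with content $\lambda$ equals the probability that site $1$ carries a $1$ in the chain with content $\kappa$. So it suffices to treat $\kappa$.

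\textbf{Step 2: write the two-species density as a difference.} Use the recolorings $\psi_{\ge1}$ (send all positive labels to $1$) and $\psi_{\ge2}$ (send $1\mapsto0$ and $2\mapsto1$). Applying \cref{cor:lumping} again gives
\[
\langle\eta_1^{(i)}\rangle
=\Pr_{(1^{M_i},0^{n-M_i})}(\eta_1=1)-\Pr_{(1^{M_{i+1}},0^{n-M_{i+1}})}(\eta_1=1).
\]
Each term is a single-species density at site $1$, so \cref{cor:density} applies to both. We obtain
\[
\langle\eta_1^{(i)}\rangle=\Bigl(x_1-\tfrac1{t^{n-1}}\Bigr)\left[\frac{e^*_{M_i-1}(t\bfx^{(1)};t)}{t^{M_i-1}e^*_{M_i}(\bfx;t)}-\frac{e^*_{M_{i+1}-1}(t\bfx^{(1)};t)}{t^{M_{i+1}-1}e^*_{M_{i+1}}(\bfx;t)}\right].
\]
Putting this over the common denominator $t^{M_i-1}e^*_{M_i}(\bfx)e^*_{M_{i+1}}(\bfx)$, and using $M_i-M_{i+1}=m_i$, the numerator becomes
\[
e^*_{M_i-1}(t\bfx^{(1)})\,e^*_{M_{i+1}}(\bfx)-t^{m_i}\,e^*_{M_{i+1}-1}(t\bfx^{(1)})\,e^*_{M_i}(\bfx).
\]

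\textbf{Step 3: match the first displayed formula, which has $\bfx^{(1)}$ rather than $\bfx$ in the numerator.} Expand with respect to the first variable. From \cref{def:int_elem}, splitting on whether $1\in S$ gives
\[
e^*_k(\bfx;t)=\Bigl(x_1-\tfrac1{t^{n-1}}\Bigr)e^*_{k-1}(\text{shifted, on }\bfx^{(1)})+e^*_k(\text{shifted, on }\bfx^{(1)}).
\]
Here the shifts come from the $t$-exponents changing when $1\notin S$: in $n-1$ variables the constant becomes $t^{\#\cdots}/t^{n-2}$, and an extra $t$ appears when $1\in S^c$. Concretely, I expect
\[
e^*_k(\bfx;t)=\Bigl(x_1-t^{-(n-1)}\Bigr)t^{-(k-1)}e^*_{k-1}(t\bfx^{(1)};t)+e^*_k(\bfx^{(1)};t),
\]
with $e^*$ on the right taken in $n-1$ variables. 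This is consistent with the shape of \cref{cor:density}. Substituting this expansion for both $e^*_{M_{i+1}}(\bfx)$ and $e^*_{M_i}(\bfx)$ in the numerator, the cross terms involving $(x_1-t^{-(n-1)})e^*_{M_i-1}(t\bfx^{(1)})e^*_{M_{i+1}-1}(t\bfx^{(1)})$ should cancel, since the powers of $t$ agree after the factor $t^{m_i}$. What remains is exactly the claimed first expression. Verifying these $t$-power bookkeeping identities is the main technical obstacle; I would check them carefully on small $n$.

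\textbf{Step 4: the Schur form.} Apply \cref{lem:2column} in $n-1$ variables with $a=M_{i+1}$ and $b=m_i-1$, so that $a+b=M_i-1$. Its second line gives $t^{M_{i+1}}$ times the numerator bracket, after matching which argument carries $t\bfx^{(1)}$ and which carries $\bfx^{(1)}$. Dividing by this factor accounts for the extra $t^{M_{i+1}}$ in the denominator of the second formula.
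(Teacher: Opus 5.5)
Your proposal is correct and follows the paper's proof essentially step for step. Both arguments lump to a difference of two single-species densities, apply \cref{cor:density}, and expand $e^*_{M_i}(\bfx;t)$ and $e^*_{M_{i+1}}(\bfx;t)$ along $x_1$ via $e^*_k(\bfx;t)=(x_1-t^{-(n-1)})t^{-(k-1)}e^*_{k-1}(t\bfx^{(1)};t)+e^*_k(\bfx^{(1)};t)$; this identity does hold by splitting on whether $1\in S$, and the cross terms cancel exactly as you predicted. Both then finish with \cref{lem:2column} in $n-1$ variables with $a=M_{i+1}$ and $b=m_i-1$. Your preliminary reduction to the two-species content $(2^{M_{i+1}},1^{m_i},0^{n-M_i})$ is a harmless detour, which the paper skips by lumping directly from $\lambda$.
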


\begin{proof}
By \cref{prop:lumping} and the lumping property (\cref{cor:lumping}),  the density of the particle
    of species $i$ is the density of the particle of species $1$ in the single species
    Markov chain with $M_i$ particles, minus the density of the particle of species $1$
    in the single species Markov chain with $M_{i+1}$ particles.
    Thus using \cref{eq:density}, we have that 
     \begin{align*}
       \langle \eta_1^{(i)} \rangle &= 
       \frac{(x_1-\frac{1}{t^{n-1}}) e^*_{M_i-1}\left(t\bfx^{(1)};t\right)}{t^{M_i-1} e^*_{M_i}(\bfx;t)}  - \frac{(x_1-\frac{1}{t^{n-1}}) e^*_{M_{i+1}-1}\left(t\bfx^{(1)};t\right)}{t^{M_{i+1}-1} e^*_{M_{i+1}}(\bfx;t)} \\
       &= \frac{(x_1-\frac{1}{t^{n-1}})}{t^{M_{i}-1}} \cdot 
       \frac{e^*_{M_i-1}\left(t\bfx^{(1)};t\right) e^*_{M_{i+1}}(\bfx;t)-t^{m_i} e^*_{M_{i+1}-1}\left(t\bfx^{(1)};t\right) e^*_{M_i}(\bfx;t)}{e^*_{M_i}(\bfx;t) e^*_{M_{i+1}}(\bfx;t)}, \label{eq:sofar}
   \end{align*}
   where $\bfx:=(x_1,\dots,x_n)$.
If we now use the fact that 
\begin{equation}
e^*_d(\bfx;t) = \left(x_1-\frac{1}{t^{n-1}}\right) t^{-(d-1)} e^*_{d-1}\left(t\bfx^{(1)};t\right) + e^*_d\left(\bfx^{(1)};t\right),    
\end{equation}
we get the first statement of the proposition.  To get the second statement, we use \cref{lem:2column}.
\end{proof}

\begin{appendices}
    
\section{Shape permuting operators and Knop–Sahi recurrence}\label{app:shape_permuting}
The main purpose of \cref{app:shape_permuting} and \cref{app:permuted_basement} is to prove \cref{thm:weak-reordering}. We will follow the strategy of \cite{AlexanderssonSawhney2019} and \cite{AyyerMartinWilliams2025} (which proved the homogeneous case of \cref{thm:weak-reordering}). 

Fix $n\geq 1$. Let $\YY_n$ denote the set of integer partitions 
$\lambda=(\lambda_1,\dots,\lambda_n)=(\lambda_1 \geq \dots \geq \lambda_n)$ with at most $n$ parts.  We let $|\lambda|$ denote the sum $\lambda_1+ \dots + \lambda_n$ of the parts of the partition and call it the \emph{size} of $\lambda$.
Let $\mcP_n$ denote the ring of polynomials in $n$ variables, and 
let $\mcP_n^{(d)}$ denote the polynomials of degree at most $d$.

\subsection{The dehomogenization operator \texorpdfstring{$\psi$}{psi}}
For any partition $\lambda\in\YY_n$ of size $d$, we define the space $\VV\subset\mcP^{(d)}_n$ by
$$\VV:=\left\{f\in \mcP_n^{(d)}: f(\widebar\nu)=0 \text{ for any $|\nu|\leq |\lambda|$ and $\nu\notin S_n(\lambda)$}\right\},$$
where $\widebar{\nu}$ is the sequence defined in \cref{eq:k2}.

We recall that the \textit{nonsymmetric Macdonald polynomials} $E_{\mu} = E_\mu(\bfx;q,t)$, where $\mu$ ranges over all compositions with at most $n$ parts, form a basis for the space of polynomials $\mcP_n$; see e.g. \cite{HaglundHaimanLoehr2008}. 
We will also use the interpolation analogue of these polynomials $E^*_\mu(\bfx;q,t)$ \cite{Knop1997b,Sahi1996}, which can be characterized as follows. 
\begin{thm}[\cite{Knop1997b,Sahi1996}]\label{thm:Knop_Sahi}
    Fix $\mu\in \NN^n$ of size $d$. There exists a unique polynomial $E^*_\mu\in\mcP_n^{(d)}$, called the \emph{nonsymmetric interpolation Macdonald polynomial}, such that
    \begin{itemize}
        \item $[x^\mu]E^*_\mu=1$ (so in particular, $E^*_{\mu}$ has degree $d$),
        \item $E^*_\mu(\widetilde\nu)=0$ for any $\nu\in \NN^n$ satisfying $|\nu|\leq d$ and $\nu\neq \mu$.
    \end{itemize}
    Moreover, the top homogeneous part of $E^*_\mu$ is  $E_\mu$.
\end{thm}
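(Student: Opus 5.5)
The plan is to follow Knop's original strategy: first prove uniqueness together with an interpolation (unisolvence) statement, then construct $E^*_\mu$ by a recursion built from intertwining operators, and finally read off the normalization and the top homogeneous part from that recursion.

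\emph{Counting and uniqueness.} The monomials $x^\nu$ with $|\nu|\le d$ form a basis of $\mcP_n^{(d)}$. Hence $\dim \mcP_n^{(d)}$ equals the number of points $\widebar\nu$ with $\nu\in\NN^n$ and $|\nu|\le d$. I would show that, for generic $q,t$, the evaluation map
$$\mathrm{ev}:\mcP_n^{(d)}\to \QQ(q,t)^{\{\nu:\,|\nu|\le d\}}$$
is injective, and hence bijective. The tool is a triangularity argument: order compositions by size and then by a Bruhat-type order. It then suffices to produce, for each $\mu$, a polynomial of degree $|\mu|$ with leading monomial $x^\mu$ that vanishes at all $\widebar\nu$ preceding $\mu$ and not at $\widebar\mu$. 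Existence of the $E^*_\mu$ with these properties gives exactly this. Granting bijectivity, uniqueness follows: two candidates differ by an element of $\mcP_n^{(d)}$ vanishing at all $\widebar\nu$ with $\nu\neq\mu$, so the difference is a scalar multiple of the preimage of $\delta_\mu$, and the normalization $[x^\mu]=1$ forces that scalar to be $0$. This requires $[x^\mu]$ of that preimage to be nonzero, which comes out of the construction below.

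\emph{Existence by recursion.} I would construct the $E^*_\mu$ inductively using two families of operators that permute the interpolation points.
\begin{itemize}
\item The Demazure--Lusztig operators $H_i$ (in Knop's inhomogeneous normalization). When $\mu_i<\mu_{i+1}$, the polynomial $E^*_{s_i\mu}$ is an explicit combination $\bigl(H_i+c\bigr)E^*_\mu$, with $c$ depending on $\widebar\mu_i/\widebar\mu_{i+1}$. The reason is that $H_i$ preserves $\mcP_n^{(d)}$ and acts on values at the points $\widebar\nu$ through $s_i$, up to explicit scalars.
\item The affine raising operator
$$\Phi f(x)=\bigl(x_n-t^{-n+1}\bigr)f\bigl(x_n/q,x_1,\dots,x_{n-1}\bigr).$$
It sends $\mu\mapsto\Phi\mu=(\mu_2,\dots,\mu_n,\mu_1+1)$, maps points $\widebar\nu$ to points $\widebar{\Phi\nu}$, and its linear factor kills exactly the points with last coordinate zero.
\end{itemize}
Starting from $E^*_0=1$, every composition is reached by these moves. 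At each step one checks that vanishing at all $\widebar\nu$ with $\nu\ne\mu$ and $|\nu|\le|\mu|$ is preserved, and that the value at the new point is nonzero.

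\emph{Normalization and top component.} Taking top homogeneous parts, $H_i$ becomes the usual Demazure--Lusztig operator and $\Phi$ becomes the homogeneous raising operator $x_n f(x_n/q,x_1,\dots)$. So the top components satisfy Knop's recursion for $E_\mu$ with the same initial condition $1$, and hence equal $E_\mu$. Since $[x^\mu]E_\mu=1$, this gives $[x^\mu]E^*_\mu=1$ and degree exactly $d$.

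\emph{Main obstacle.} The hard step is checking that $H_i$ and $\Phi$ act on the grid $\{\widebar\nu\}$ compatibly with the statistic $k_i(\nu)$ in \eqref{eq:k}. This means $\widebar{s_i\nu}=s_i\widebar\nu$ when $\nu_i\ne\nu_{i+1}$, and $\Phi$ shifts $\widebar\nu$ correctly. It also requires handling the degenerate case $\nu_i=\nu_{i+1}$, where $H_i$ acts by a scalar on values. I would verify these by direct computation of $k_j$ under the swap and under the cyclic shift.
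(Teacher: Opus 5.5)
The paper gives no proof of this statement; it quotes it from Knop and Sahi, and your outline is essentially Knop's original argument. The architecture is sound. Ordering the points by size already makes the evaluation matrix $\bigl(E^*_\mu(\overline\nu)\bigr)_{|\mu|,|\nu|\le d}$ block triangular with diagonal blocks $\mathrm{diag}\bigl(E^*_\mu(\overline\mu)\bigr)$, so no Bruhat order is needed. That gives unisolvence and uniqueness once existence with $E^*_\mu(\overline\mu)\neq 0$ is known, and reading $[x^\mu]E^*_\mu=1$ off the top-degree recursion is fine.

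\textbf{The gap.} The problem is the step you yourself flag as the main obstacle: with the $k_i$ of \eqref{eq:k}, it fails as written. Your $\Phi$ and your Hecke direction are in Knop's convention, which is the mirror image ($x_i\leftrightarrow x_{n+1-i}$, $\nu\leftrightarrow\nu^{\rev}$) of the one fixed by \eqref{eq:k}.
\begin{itemize}
\item If $\nu_n=0$, then $k_n(\nu)=\#\{j<n:\nu_j>0\}$, which is generally not $n-1$. So $x_n-t^{1-n}$ does not kill the points with $\nu_n=0$.
\item The shift does not preserve the grid: $k_1(\nu_n-1,\nu_1,\dots,\nu_{n-1})=\#\{j<n:\nu_j\ge\nu_n-1\}$ differs from $k_n(\nu)=\#\{j<n:\nu_j>\nu_n\}$ as soon as some $\nu_j\in\{\nu_n-1,\nu_n\}$.
\item Already for $n=2$: $\overline{(0,0)}=(t^{-1},1)$, and your $\Phi(1)=x_2-t^{-1}$ takes the value $1-t^{-1}\neq0$ there. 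So $\Phi(1)$ is not $E^*_{(0,1)}$, which is $x_2-1+\frac{1-t}{1-qt}(x_1-t^{-1})$.
\end{itemize}
The degenerate case has the same issue. Under \eqref{eq:k}, $\nu_i=\nu_{i+1}$ forces $\overline\nu_{i+1}=t\,\overline\nu_i$. So the Hecke operator that acts by a scalar on values there must have $s_i$-coefficient proportional to $tx_i-x_{i+1}$, as $T_i$ in \eqref{eq:Hecke} does. The mirror-image normalization, which is scalar on the hyperplane $x_i=tx_{i+1}$, would bring in values at the off-grid point $s_i\overline\nu$, and the vanishing argument breaks.

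\textbf{The repair.} Conjugate everything by the reversal.
\begin{itemize}
\item For the raising step, take $\Phi f(x)=(x_1-t^{1-n})f(x_2,\dots,x_n,x_1/q)$ and $\mu\mapsto(\mu_n+1,\mu_1,\dots,\mu_{n-1})$, with $E^*_{(\mu_n+1,\mu_1,\dots,\mu_{n-1})}=q^{\mu_n}\Phi E^*_\mu$. This is \cref{thm:KS_recurrence}. The factor $q^{\mu_n}$, which your $\Phi$ lacks, is what makes the new leading coefficient equal to $1$.
\item Now $k_1(\nu)=n-1$ whenever $\nu_1=0$, so the linear factor kills exactly the points with $\nu_1=0$. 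For $\nu_1\ge1$ one checks $(\Phi f)(\overline\nu)=(\overline\nu_1-t^{1-n})f(\overline\rho)$ with $\rho=(\nu_2,\dots,\nu_n,\nu_1-1)$.
\item For the Hecke step, go from $\mu_i>\mu_{i+1}$ to $s_i\mu$ using $T_i+\frac{1-t}{1-\overline\mu_i/\overline\mu_{i+1}}$, where $\overline\mu_i/\overline\mu_{i+1}=q^{\mu_i-\mu_{i+1}}t^{r_i(\mu)}$. This is \cref{prop:shape}. In this direction $T_i x^\mu$ has $x^{s_i\mu}$-coefficient $1$, so it matches, with no extra scalar, the homogeneous recursion for $E_\mu$ you want to invoke.
\item The induction then runs as follows. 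If $\mu_1\ge1$, use $\Phi$ from size $|\mu|-1$. If $\mu_1=0\neq\mu$, there is an ascent $\mu_i<\mu_{i+1}$, and $s_i\mu$ has one fewer ascending pair.
\end{itemize}
With these changes your proof goes through.
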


When indexed by a partition $\lambda$, nonsymmetric interpolation Macdonald polynomials coincide with interpolation ASEP polynomials: $F^*_\lambda=E^*_\lambda$. We also have the following 
result from \cite[Lemma 2.5 and Proposition 2.14]{BenDaliWilliams2025}.
\begin{prop}[\cite{BenDaliWilliams2025}]\label{prop:Vstar}
    Fix  a partition $\lambda$. Then $\{E_{\mu}^* \ \vert \ \mu\in S_n(\lambda)\}$ and $\{F_{\mu}^* \ \vert \ \mu\in S_n(\lambda)\}$ are both bases for $\VV$. In particular, if $\mu\in S_n(\lambda)$, then $F^*_\mu$ is a linear combination of $E_{\nu}^*$ for $\nu\in S_n(\lambda)$.
\end{prop}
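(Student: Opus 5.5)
The statement to prove is \cref{prop:Vstar}: for a partition $\lambda$, both $\{E^*_\mu : \mu\in S_n(\lambda)\}$ and $\{F^*_\mu : \mu\in S_n(\lambda)\}$ are bases of $\VV$. The plan is to show three things in order: that $\VV$ has dimension exactly $\#S_n(\lambda)$, that both families lie in $\VV$, and that each family is linearly independent.

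\textbf{Dimension of $\VV$.} The key tool is the interpolation theory behind \cref{thm:Knop_Sahi}. I will use the standard fact, due to Knop and Sahi, that the points $\widebar\nu$ with $|\nu|\le d$ are unisolvent for $\mcP_n^{(d)}$, assuming generic $q,t$. Concretely, the evaluation map
$$\mcP_n^{(d)}\to \QQ(q,t)^{\{\nu:\,|\nu|\le d\}},\qquad f\mapsto \bigl(f(\widebar\nu)\bigr)_{\nu}$$
is an isomorphism. This holds because the family $\{E^*_\nu : |\nu|\le d\}$ consists of $\dim\mcP_n^{(d)}$ polynomials, and its evaluation matrix is triangular with respect to a suitable order (Knop--Sahi's extra vanishing), with nonzero diagonal entries $E^*_\nu(\widebar\nu)\neq 0$. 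I need to be careful that the points written $\widetilde\nu$ in \cref{thm:Knop_Sahi} are the same as the points $\widebar\nu$ of \cref{eq:k2}; I will adopt this identification as notation. Once the evaluation map is an isomorphism, $\VV$ is the preimage of the coordinate subspace supported on $S_n(\lambda)$. Here I use that every $\nu\in S_n(\lambda)$ has $|\nu|=d$. It follows that $\dim\VV=\#S_n(\lambda)$.

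\textbf{Membership and independence of the $E^*$ family.} For $\mu\in S_n(\lambda)$, the polynomial $E^*_\mu$ vanishes at $\widebar\nu$ for every $\nu\ne\mu$ with $|\nu|\le d$. In particular it vanishes at all $\nu\notin S_n(\lambda)$, so $E^*_\mu\in\VV$. For independence, evaluate a linear combination $\sum c_\mu E^*_\mu$ at $\widebar\rho$ for $\rho\in S_n(\lambda)$. This returns $c_\rho E^*_\rho(\widebar\rho)$, and since $E^*_\rho(\widebar\rho)\neq0$ (the nonvanishing from the unisolvence step), each coefficient must vanish. This gives the first basis.

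\textbf{The $F^*$ family.} By the defining vanishing conditions in the theorem characterizing interpolation ASEP polynomials, each $F^*_\mu$ with $\mu\in S_n(\lambda)$ lies in $\VV$. For independence I use the normalization \eqref{eq:normalization_F}. The linear functionals $f\mapsto[x^\tau]f$, for $\tau\in S_n(\lambda)$, take the values $\delta_{\tau,\mu}$ on $F^*_\mu$, so the $F^*_\mu$ are linearly independent. Since there are $\#S_n(\lambda)=\dim\VV$ of them, they form a basis. The final claim of the statement, that $F^*_\mu$ is a linear combination of the $E^*_\nu$ with $\nu\in S_n(\lambda)$, then follows by expanding $F^*_\mu$ in the $E^*$ basis.

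\textbf{Main obstacle.} The delicate step is the dimension count, which relies on the unisolvence and the nonvanishing $E^*_\mu(\widebar\mu)\ne0$. I intend to import these from Knop and Sahi rather than reprove them. The alternative route would be to show directly that the coefficient functionals $[x^\tau]$ restricted to $\VV$ are injective, but that seems harder.
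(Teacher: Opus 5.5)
Your argument is correct relative to the results the paper states before this proposition. The paper itself gives no proof here; it only points to \cite[Lemma 2.5 and Proposition 2.14]{BenDaliWilliams2025}.

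Your $E^*$ half is the standard Knop--Sahi argument: unisolvence of the nodes $\overline{\nu}$, $|\nu|\le d$, gives $\dim\VV=\#S_n(\lambda)$, and evaluation at $\overline{\rho}$ gives independence. Two small points about it:
\begin{enumerate}
\item Triangularity of the evaluation matrix needs only the defining vanishing, with nodes ordered by $|\nu|$. Extra vanishing is not required.
\item It is cleaner to import unisolvence itself, i.e. injectivity of evaluation on $\mcP_n^{(d)}$, which is how Knop obtains existence and uniqueness of $E^*_\mu$. Then $E^*_\rho(\overline{\rho})\neq 0$ follows, rather than being a separate input that is usually proved from unisolvence anyway.
\end{enumerate}

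The genuinely different part is the $F^*$ half. There you read membership in $\VV$ and independence straight off the characterization theorem \cite[Theorem 2.17]{BenDaliWilliams2025} quoted in the introduction. Within this paper that is legitimate. Be aware, though, that in the source the characterization (Theorem 2.17) comes after Lemma 2.5 and Proposition 2.14 and presumably rests on them. Its existence half, namely that the $F^*_\mu$ satisfy the vanishing conditions, is exactly the nontrivial content of the $F^*$ claim here. So as a self-contained proof your $F^*$ step would be circular.

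The route that actually does the work runs as follows:
\begin{itemize}
\item Start from $F^*_\lambda=E^*_\lambda$ and the relations of \cref{prop:T_i-F_star}, so that each $F^*_\mu$ is some $T_\sigma E^*_\lambda$.
\item Show that $\VV=\mathrm{span}\{E^*_\nu:\nu\in S_n(\lambda)\}$ is stable under each $T_i$, via Knop's intertwining relations (cf.\ \cref{prop:shape} together with the quadratic Hecke relation).
\item Get independence from a triangular change of basis, or from the top homogeneous parts $F_\mu$.
\end{itemize}
Your version buys brevity. The Hecke version is what makes the characterization theorem available in the first place.
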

\begin{remark}\label{rmk:basis_1}
In the following, we will use the fact that even after evaluating at $q=1$, $\{E^*_\mu(\bfx;1,t) \ \vert \ {\mu\in \YY_n}\}$ and $\{F^*_\mu(\bfx;1,t) \ \vert \ {\mu\in \YY_n}\}$  are both bases of the polynomial space in $n$ variables and coefficients in $\QQ(q)$. Indeed, $E^*_\mu$ is triangular in the monomial basis (with respect to the dominance order on compositions) (see \cite[Theorem 3.11]{Knop1997b}) and the coefficient of $x_\mu$ in $E^*_\mu$ is 1, independent of $q$. A similar argument works for the polynomials $F^*_\mu$ using \cref{eq:normalization_F}.
\end{remark}

We will make use of the \emph{dehomogenization map}; see \cite[Definition 3.1 and Theorem 3.8]{NakviSahiSergel2023} for a similar definition in the context of Jack polynomials.
\begin{definition}\label{def:dehom}
We define a linear map $\boldsymbol{\psi}:\mcP_n \to \mcP_n$ by 
\begin{equation}
 \boldsymbol{\psi}: E_\mu\mapsto E_\mu^*, \quad\text{for any partition $\mu$.}   
\end{equation}
We call the operator $\boldsymbol{\psi}$ the \emph{dehomogenization map}.
\end{definition}

Since both $(E_\mu)$ and $(E_\mu^*)$ are bases of $\mcP_n$, this map is well defined and is an isomorphism.
Alternatively, $\boldsymbol{\psi}$ can be defined as the linear operator such that if $f$ is homogeneous of degree $d$ then $\boldsymbol{\psi}(f)$ is the unique polynomial of top homogeneous part $f$ and which vanishes on all compositions of size smaller than $d$. 

Using \cref{prop:Vstar}, and the fact that the top homogeneous part of $F^*_\mu$ is $F_\mu$, we get that
$\boldsymbol{\psi}(F_\mu)=F_\mu^*$. Thus we can conclude the following.

\begin{lem}\label{lem:dehomog}
If we have an expansion 
$E_\mu=\sum_{\nu}c_{\mu,\nu}F_\nu$
(for some coefficients $c_{\mu,\nu}\in \QQ(q,t)$), we get that 
$E^*_\mu=\sum_{\nu}c_{\mu,\nu}F^*_\nu$ by applying the map $\boldsymbol{\psi}$.
\end{lem}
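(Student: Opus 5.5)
The plan is to apply the linear map $\boldsymbol{\psi}$ of \cref{def:dehom} to both sides of $E_\mu=\sum_{\nu}c_{\mu,\nu}F_\nu$ and show that $\boldsymbol{\psi}(E_\mu)=E^*_\mu$ and $\boldsymbol{\psi}(F_\nu)=F^*_\nu$. Linearity then gives $E^*_\mu=\sum_\nu c_{\mu,\nu}\boldsymbol{\psi}(F_\nu)=\sum_\nu c_{\mu,\nu}F^*_\nu$. The first identity holds by definition of $\boldsymbol{\psi}$, at least when $\boldsymbol{\psi}$ is read as sending $E_\mu\mapsto E^*_\mu$ for every composition $\mu$; this reading is needed for $\{E_\mu\}$ to be a basis of $\mcP_n$. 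Because the identity should also hold after specializing $q=1$, I would also note that \cref{rmk:basis_1} keeps these families bases at $q=1$.

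The only real step is $\boldsymbol{\psi}(F_\nu)=F^*_\nu$. I would first check that both $E_\mu$ and the $F_\nu$ appearing in the expansion are homogeneous of degree $d=|\lambda|$, where $\mu,\nu\in S_n(\lambda)$. Next I would make precise the alternative description of $\boldsymbol{\psi}$. For a homogeneous $f$ of degree $d$, write $f=\sum_{|\rho|=d}a_\rho E_\rho$. Then $\boldsymbol{\psi}(f)=\sum a_\rho E^*_\rho$, which has top homogeneous part $f$ by \cref{thm:Knop_Sahi}, and each $E^*_\rho$ vanishes at $\widetilde\sigma$ for every $|\sigma|<d$. For uniqueness, suppose $g\in\mcP^{(d)}_n$ has zero top part, so $\deg g<d$, and vanishes at all points of size $<d$. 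Expand $g$ in the $E^*_\sigma$ with $|\sigma|<d$, and take a minimal $\sigma$ with nonzero coefficient. Evaluating at $\widetilde\sigma$ and using the interpolation property (with $E^*_\sigma(\widetilde\sigma)\neq0$) shows that coefficient is zero. Hence $g=0$.

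Now take $F_\nu$ with $\nu\in S_n(\lambda)$. By \cref{prop:Vstar}, $F^*_\nu$ is a linear combination of the $E^*_\rho$ with $\rho\in S_n(\lambda)$, all of size $d$. So $F^*_\nu$ vanishes at every point of size $<d$, and its top homogeneous part is $F_\nu$ by the theorem on interpolation ASEP polynomials. The uniqueness above then gives $\boldsymbol{\psi}(F_\nu)=F^*_\nu$.

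The main obstacle is bookkeeping about the evaluation points rather than anything substantive. The uniqueness argument needs the vanishing points $\widetilde\sigma$ of \cref{thm:Knop_Sahi} to match the points used in defining $\VV$ and $F^*$, namely $\widebar\sigma$. I would resolve this by working entirely in the $E^*$ basis: $F^*_\nu$ lies in the span of $E^*_\rho$ with $|\rho|=d$, and $\boldsymbol{\psi}$ restricted to degree-$d$ homogeneous polynomials is the map $\sum a_\rho E_\rho\mapsto\sum a_\rho E^*_\rho$. Taking top parts of $F^*_\nu=\sum_\rho b_\rho E^*_\rho$ gives $F_\nu=\sum_\rho b_\rho E_\rho$, hence $\boldsymbol{\psi}(F_\nu)=F^*_\nu$ directly. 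This route avoids the pointwise uniqueness argument altogether.
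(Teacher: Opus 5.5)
Your proposal is correct and follows essentially the same route as the paper. The paper likewise gets $\boldsymbol{\psi}(F_\nu)=F^*_\nu$ by writing $F^*_\nu$ in the $E^*_\rho$ basis with $\rho\in S_n(\lambda)$ (via \cref{prop:Vstar}), comparing top homogeneous parts, and then applying $\boldsymbol{\psi}$ linearly. Your side remarks — that $\boldsymbol{\psi}$ must be defined on $E_\mu$ for all compositions, and the $\widetilde\nu$ versus $\widebar\nu$ notation — are accurate, but the final argument you settle on does not need them.
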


\subsection{The Hecke operators}
We now recall the definition of the Hecke operators and their action on the ASEP polynomials (see e.g. \cite[Section 2.3]{BenDaliWilliams2025}).

For $1\leq i \leq n-1$, we let $s_i=(i,i+1)$ denote the transposition exchanging $i$ and $i+1$. The transposition $s_i$ acts on a polynomial in $\mcP_n$ by permuting the variables $x_i$ and $x_{i+1}$:
$$s_i\cdotp f(x_1,\dots,x_i,x_{i+1},\dots,x_n)
=f(x_1,\dots,x_{i+1},x_i,\dots,x_n).$$

The \emph{Hecke operator} $T_i$ is the linear operator on $\mcP_n$ defined by 
\begin{equation}\label{eq:Hecke}
T_i:=t-\frac{tx_i-x_{i+1}}{x_i-x_{i+1}}(1-s_i).
\end{equation}

These operators satisfy the relations of the Hecke algebra of type $A_{n-1}$
\begin{equation}\label{eq:Hecke_algebra}
\begin{array}{ll}
(T_i-t)(T_i+1)=0 & \text{for $1\leq i \leq n-1$}     \\
 T_iT_{i+1}T_i=T_{i+1}T_iT_{i+1}    & \text{for $1\leq i \leq n-2$}\\
 T_iT_j=T_jT_i    & \text{for $ |i-j| > 1$}.\\
\end{array}
\end{equation}

If $\sigma\in S_n$ and $\sigma=s_{i_1}\dots s_{i_\ell}$ is a reduced decomposition of $\sigma$, we define 
\begin{equation}\label{def:Hecke}
T_\sigma:=T_{i_1}\dots T_{i_\ell}.
\end{equation}
It follows from \eqref{eq:Hecke_algebra} that this definition is independent of the choice of reduced expression.

\begin{prop}\label{prop:T_i-F_star} \cite[Proposition 2.10]{BenDaliWilliams2025}
    Let $\mu = (\mu_1,\dots,\mu_n)$.  For $1\leq i \leq n-1$, the interpolation ASEP polynomials $F^*_\mu$ satisfy the following: 
    \begin{enumerate}
        \item\label{item 1} $T_i F^*_{\mu}=F^*_{s_i\mu}$ if $\mu_i>\mu_{i+1}$,
        \item\label{item 2} $T_iF^*_{\mu}=tF^*_{\mu}$ if $\mu_i=\mu_{i+1}$,
        \item\label{item 3} $T_i F^*_{\mu}=(t-1)F^*_{\mu}+tF^*_{s_i\mu}$ if $\mu_i<\mu_{i+1}$.
    \end{enumerate}
\end{prop}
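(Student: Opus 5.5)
The plan is to use the vanishing characterization of $F^*_\mu$: first show that the Hecke operator $T_i$ preserves the space $\VV$, and then identify $T_iF^*_\mu$ inside $\VV$ by looking only at its top homogeneous component. There the statement reduces to the known Hecke action on the homogeneous ASEP polynomials $F_\mu$.

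\emph{Step 1: $T_i$ preserves $\mcP_n^{(d)}$ and commutes with taking top homogeneous parts.} The divided difference $(1-s_i)f/(x_i-x_{i+1})$ is a polynomial of degree at most $\deg f-1$. Multiplying it by $tx_i-x_{i+1}$ restores degree at most $\deg f$. So $T_i$ is a degree-$0$ operator, and the top homogeneous part of $T_if$ is $T_i$ applied to the top homogeneous part of $f$.

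\emph{Step 2: $T_i(\VV)\subseteq \VV$.} Let $f\in\VV$ and let $\nu$ be a composition with $|\nu|\le d$ and $\nu\notin S_n(\lambda)$. Evaluating,
$$(T_if)(\widebar\nu)=tf(\widebar\nu)-\frac{t\widebar\nu_i-\widebar\nu_{i+1}}{\widebar\nu_i-\widebar\nu_{i+1}}\bigl(f(\widebar\nu)-f(s_i\widebar\nu)\bigr).$$
If $\nu_i\neq\nu_{i+1}$, I will check from \eqref{eq:k} that $k_i(s_i\nu)=k_{i+1}(\nu)$ and $k_{i+1}(s_i\nu)=k_i(\nu)$, with all other $k_j$ unchanged. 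Hence $s_i\widebar\nu=\overline{s_i\nu}$. Since $s_i\nu\notin S_n(\lambda)$ and $|s_i\nu|=|\nu|$, both $f(\widebar\nu)$ and $f(s_i\widebar\nu)$ vanish, and so does $(T_if)(\widebar\nu)$.

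If $\nu_i=\nu_{i+1}$, then $k_i(\nu)=k_{i+1}(\nu)+1$, so $t\widebar\nu_i=\widebar\nu_{i+1}$. The coefficient $tx_i-x_{i+1}$ then vanishes at $\widebar\nu$, while the denominator $\widebar\nu_i-\widebar\nu_{i+1}$ is nonzero for generic $q,t$. This gives $(T_if)(\widebar\nu)=tf(\widebar\nu)=0$.

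Making the division by $x_i-x_{i+1}$ rigorous at such points, by writing $T_if$ as a polynomial before evaluating, is the one place needing care. I expect this step to be the main (though mild) obstacle.

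\emph{Step 3: identify $T_iF^*_\mu$ in the basis $\{F^*_\tau\}$.} By \cref{prop:Vstar}, $\{F^*_\tau:\tau\in S_n(\lambda)\}$ is a basis of $\VV$. By \eqref{eq:normalization_F}, an element $g\in\VV$ equals $\sum_\tau ([x^\tau]g)\,F^*_\tau$, so $g$ is determined by its coefficients $[x^\tau]g$ for $\tau\in S_n(\lambda)$. These monomials have top degree $d$, so by Step 1 we have $[x^\tau]T_iF^*_\mu=[x^\tau]T_iF_\mu$.

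The homogeneous ASEP polynomials satisfy exactly relations \ref{item 1}--\ref{item 3} with $F$ in place of $F^*$; this is the standard exchange relation coming from the qKZ description in \cite{CantinideGierWheeler2015}. Reading off coefficients of $x^\tau$ on both sides of those relations, and using $[x^\tau]F_\sigma=[x^\tau]F^*_\sigma=\delta_{\tau,\sigma}$, gives the three claimed identities.

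Equivalently, one can phrase Step 3 via the dehomogenization map $\boldsymbol\psi$. By Steps 1--2, $\boldsymbol\psi$ commutes with $T_i$ on the relevant spaces, so we may apply $\boldsymbol\psi$ to the homogeneous relations, as in \cref{lem:dehomog}.
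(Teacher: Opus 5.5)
Your proof is correct. There is no in-paper proof to compare it with, since the paper quotes this proposition from \cite{BenDaliWilliams2025} without argument. Your route is a self-contained dehomogenization argument in the spirit of \cref{lem:dehomog}. You take the vanishing characterization of $F^*_\mu$ as primary and show that $T_i$ stabilizes $\VV$, using $s_i\widebar\nu=\overline{s_i\nu}$ when $\nu_i\neq\nu_{i+1}$ and $t\widebar\nu_i=\widebar\nu_{i+1}$ when $\nu_i=\nu_{i+1}$; both identities check out from \eqref{eq:k}. You then transport the homogeneous exchange relations coefficient by coefficient.

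The point you flag as the main obstacle is actually harmless. For every $\nu$, the ratio $\widebar\nu_i/\widebar\nu_{i+1}$ is either $t^{-1}$, or $q^{\nu_i-\nu_{i+1}}t^{k_{i+1}(\nu)-k_i(\nu)}$ with $\nu_i\neq\nu_{i+1}$. Neither is ever $1$ for generic $q,t$, so $(T_if)(\widebar\nu)$ can be computed directly from the rational expression.

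Two small economies are available:
\begin{enumerate}
\item Uniqueness in the characterization theorem already forces $h=0$ for any $h\in\VV$ with $[x^\tau]h=0$ for all $\tau\in S_n(\lambda)$: add $h$ to $F^*_\mu$ and apply uniqueness. So \cref{prop:Vstar} is not needed.
\item Item (3) follows from item (1) applied to $s_i\mu$, together with $T_i^2=(t-1)T_i+t$. So only items (1) and (2) require homogeneous input.
\end{enumerate}

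The trade-off is that your argument treats as given that $F_\mu$ is the top component of $F^*_\mu$. That is legitimate here, because the paper quotes it as part of the characterization theorem. Be aware, though, of what happens if one instead defines $F^*_\mu$ by $F^*_\lambda=E^*_\lambda$ together with item (1). In that setting the top-component fact is itself obtained from the Hecke relations (via your Step~1). Your Step~2 then becomes exactly the lemma used in the opposite direction, to deduce the vanishing characterization from the Hecke relations.
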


Moreover, we have the following \emph{shape permuting
operator} formula for interpolation nonsymmetric Macdonald polynomials.

\begin{prop}[{\cite[Proposition 3.6]{BenDaliWilliams2025}}]
\label{prop:shape}
Let $\nu\in \NN^n$ be a composition and fix $1\leq j \leq n-1$. Suppose that $\nu_j>\nu_{j+1}$, and
write
\begin{equation}\label{eq:def_r}
r_j(\nu):=
\#\{k<j \mid \nu_{j+1}<\nu_{k}\leq \nu_j\}
+
\#\{k>j \mid \nu_{j+1}\leq \nu_k < \nu_j\}.
\end{equation}

Then
    \begin{equation}
\label{shape-permute}
E^*_{s_j \nu}(\bfx;q,t) = 
\left(T_j + \frac{1-t}{1-q^{\nu_{j}-\nu_{j+1}}t^{r_j(\nu)}}\right)
E^*_{\nu}(\bfx;q,t).
\end{equation}
\end{prop}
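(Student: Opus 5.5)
We prove \cref{prop:shape} using the vanishing characterization of $E^*_{s_j\nu}$ in \cref{thm:Knop_Sahi}. Let $d=|\nu|$ and $c:=\frac{1-t}{1-q^{\nu_j-\nu_{j+1}}t^{r_j(\nu)}}$, and set
$$f:=\left(T_j+c\right)E^*_\nu .$$
We will show that $f\in\mcP_n^{(d)}$, that $[x^{s_j\nu}]f=1$, and that $f(\widebar\lambda)=0$ for every $\lambda\in\NN^n$ with $|\lambda|\le d$ and $\lambda\neq s_j\nu$. Uniqueness in \cref{thm:Knop_Sahi} then gives $f=E^*_{s_j\nu}$. (I am reading the evaluation points $\widetilde\nu$ in \cref{thm:Knop_Sahi} as the points $\widebar\nu$ of \eqref{eq:k2}.)

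\emph{Degree and normalization.} The operator $T_j$ maps polynomials of degree $\le d$ to polynomials of degree $\le d$, since the divided difference $(1-s_j)/(x_j-x_{j+1})$ lowers degree by one. So $f\in\mcP_n^{(d)}$. For the coefficient of $x^{s_j\nu}$ we pass to top homogeneous parts. Then $f$ has top part $(T_j+c)E_\nu$, and we may invoke the homogeneous shape permuting identity $E_{s_j\nu}=(T_j+c)E_\nu$, which is classical. Alternatively, a direct check works: $E^*_\nu$ is triangular in dominance order, $\nu_j>\nu_{j+1}$, and $-\frac{tx_j-x_{j+1}}{x_j-x_{j+1}}(1-s_j)$ sends $x^\nu$ to $x^{s_j\nu}$ plus terms that are larger in dominance order or lie outside the relevant range.

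\emph{Vanishing.} The key tool is an evaluation formula for $T_j$. For a point $y$ with $y_j\neq y_{j+1}$,
$$(T_jg)(y)=t\,g(y)-\frac{ty_j-y_{j+1}}{y_j-y_{j+1}}\bigl(g(y)-g(s_jy)\bigr).$$
From \eqref{eq:k} one checks that $s_j\widebar\lambda=\widebar{s_j\lambda}$ whenever $\lambda_j\neq\lambda_{j+1}$. We now split into three cases.
\begin{enumerate}
\item If $\lambda_j=\lambda_{j+1}$, then $k_j(\lambda)=k_{j+1}(\lambda)+1$, so $t\widebar\lambda_j=\widebar\lambda_{j+1}$. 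The factor $t y_j-y_{j+1}$ vanishes at $y=\widebar\lambda$, so $T_j$ acts there as multiplication by $t$ (interpret this as a limit, or use the polynomial form of $T_j$). Hence $f(\widebar\lambda)=(t+c)E^*_\nu(\widebar\lambda)$, and this is $0$ because $\lambda\neq\nu$ (recall $\nu_j>\nu_{j+1}$).
\item If $\lambda_j\neq\lambda_{j+1}$ and $\lambda\notin\{\nu,s_j\nu\}$, then $\lambda$ and $s_j\lambda$ are both different from $\nu$. So $E^*_\nu$ vanishes at $\widebar\lambda$ and at $s_j\widebar\lambda=\widebar{s_j\lambda}$, and therefore $f(\widebar\lambda)=0$.
\item If $\lambda=\nu$, put $y=\widebar\nu$. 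Since $E^*_\nu(s_jy)=E^*_\nu(\widebar{s_j\nu})=0$, we get
$$f(\widebar\nu)=E^*_\nu(\widebar\nu)\left(t-\frac{ty_j-y_{j+1}}{y_j-y_{j+1}}+c\right)=E^*_\nu(\widebar\nu)\left(c-\frac{1-t}{1-y_j/y_{j+1}}\right).$$
\end{enumerate}

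So everything reduces to the identity
$$y_j/y_{j+1}=q^{\nu_j-\nu_{j+1}}t^{k_{j+1}(\nu)-k_j(\nu)},$$
i.e.\ we need
$$k_{j+1}(\nu)-k_j(\nu)=r_j(\nu).$$
I expect this combinatorial check to be the only real obstacle, mainly because of the strict/weak inequality bookkeeping. To verify it, compare the contribution of each index $k\notin\{j,j+1\}$ to $k_{j+1}$ and to $k_j$, and also account for the pair $\{j,j+1\}$ itself.
\begin{itemize}
\item For $k<j$: the index counts in $k_{j+1}$ but not in $k_j$ exactly when $\nu_{j+1}<\nu_k\le\nu_j$.
\item For $k>j+1$: the index counts in $k_{j+1}$ but not in $k_j$ exactly when $\nu_{j+1}\le\nu_k<\nu_j$.
\item The pair itself: index $j$ contributes $1$ to $k_{j+1}$ (as $j<j+1$ and $\nu_j>\nu_{j+1}$), and index $j+1$ contributes $0$ to $k_j$. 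This accounts for the term $k=j+1$ in the second sum of \eqref{eq:def_r}, where $\nu_{j+1}\le\nu_{j+1}<\nu_j$ holds.
\end{itemize}
Summing these contributions gives exactly $r_j(\nu)$. Hence $f(\widebar\nu)=0$, and by uniqueness $f=E^*_{s_j\nu}$.
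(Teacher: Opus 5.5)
Your proof is correct. The paper does not prove this proposition; it quotes it from \cite[Proposition 3.6]{BenDaliWilliams2025}. Your argument is the standard one for intertwiner formulas of this kind: you check that $(T_j+c)E^*_\nu$ satisfies the degree bound, the vanishing conditions and the normalization of \cref{thm:Knop_Sahi} for the index $s_j\nu$.

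Each of the following checks out:
\begin{itemize}
\item the three evaluation cases;
\item the relations $s_j\widebar\lambda=\widebar{s_j\lambda}$ for $\lambda_j\neq\lambda_{j+1}$, and $t\widebar\lambda_j=\widebar\lambda_{j+1}$ for $\lambda_j=\lambda_{j+1}$;
\item the count $k_{j+1}(\nu)-k_j(\nu)=r_j(\nu)$, including the contribution of the pair $\{j,j+1\}$.
\end{itemize}
In case 1 no limit is needed, since $\widebar\lambda_{j+1}=t\widebar\lambda_j\neq\widebar\lambda_j$.

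The only soft spot is the normalization. Citing the classical homogeneous intertwiner is legitimate. Your ``direct check'' alternative, however, does not isolate what is actually required.

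Expand $T_j$ on $x_j^ax_{j+1}^b$. The monomials $x^\mu$ of $E_\nu$ that contribute to $x^{s_j\nu}$ in $T_jE_\nu$ must agree with $x^\nu$ outside positions $j,j+1$. Among those, only three kinds contribute:
\begin{itemize}
\item $x^\nu$ itself, with coefficient $1$;
\item $x^{s_j\nu}$, with coefficient $t-1$;
\item monomials with $\max(\mu_j,\mu_{j+1})>\nu_j$. Their sorted shape strictly dominates that of $\nu$, so they do not occur in $E_\nu$.
\end{itemize}
Hence $[x^{s_j\nu}]f=1+(t-1+c)\,[x^{s_j\nu}]E_\nu$, and what you must show is $[x^{s_j\nu}]E_\nu=0$.

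This does not follow from dominance of sorted shapes alone. It follows from Knop's refined triangularity: the same-shape lower terms of $E_\nu$ are rearrangements closer to decreasing order. When $\nu_j>\nu_{j+1}$, the composition $s_j\nu$ is farther from decreasing order than $\nu$, so $x^{s_j\nu}$ does not appear.
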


\begin{thm}[{\cite{Knop1997b,Sahi1996}}]\label{thm:KS_recurrence}
    For any composition $\mu$, we have
 \begin{equation}\label{eq:KS_recurrence}
  E^*_{(\mu_n+1,\mu_1,\dots,\mu_{n-1})}(x_1,\dots,x_n;q,t)=\left(x_1-\frac{1}{t^{n-1}}\right)q^{\mu_n}
  E_\mu^*\left(x_2,\dots,x_n,\frac{x_1}{q};q,t\right).  
    \end{equation}
\end{thm}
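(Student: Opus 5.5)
The plan is to verify that the right-hand side satisfies the two characterizing conditions of \cref{thm:Knop_Sahi} for the composition $\Phi\mu:=(\mu_n+1,\mu_1,\dots,\mu_{n-1})$, and then conclude by uniqueness. (In \cref{thm:Knop_Sahi} the evaluation points are the $\widebar\nu$ of \eqref{eq:k2}.) Set $d=|\mu|$, so that $|\Phi\mu|=d+1$, and write
$$g(x_1,\dots,x_n):=\left(x_1-\frac{1}{t^{n-1}}\right)q^{\mu_n}E^*_\mu\left(x_2,\dots,x_n,\frac{x_1}{q}\right).$$
Then $g$ has degree at most $d+1$.

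\emph{Normalization.} The substitution $x_i\mapsto x_{i+1}$ for $i<n$ and $x_n\mapsto x_1/q$ sends a monomial $x^\beta$ to $q^{-\beta_n}x^{(\beta_n,\beta_1,\dots,\beta_{n-1})}$. This map on monomials is injective and preserves degree. So a monomial of the form $x^{\Phi\mu}$ can only come from the term $x_1\cdot q^{\mu_n}\cdot q^{-\mu_n}x^{(\mu_n,\mu_1,\dots,\mu_{n-1})}$, that is, from $x_1$ times the image of $[x^\mu]E^*_\mu=1$. The constant term $-t^{-(n-1)}$ only produces monomials of degree at most $d$, and every other degree-$d$ monomial of $E^*_\mu$ maps to a monomial different from $x^{(\mu_n,\mu_1,\dots)}$. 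Hence $[x^{\Phi\mu}]g=1$.

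\emph{Vanishing.} Fix $\nu\in\NN^n$ with $|\nu|\le d+1$ and $\nu\neq\Phi\mu$, and split into two cases.

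If $\nu_1=0$, then $k_1(\nu)=\#\{j>1:\nu_j\ge 0\}=n-1$, so $\widebar\nu_1=t^{-(n-1)}$ and the prefactor of $g$ kills $g(\widebar\nu)$.

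If $\nu_1\ge1$, set $\nu':=(\nu_2,\dots,\nu_n,\nu_1-1)$. Then $|\nu'|=|\nu|-1\le d$, and $\nu'\neq\mu$ because $\Phi\nu'=\nu\neq\Phi\mu$. The key claim is
$$\left(\widebar\nu_2,\dots,\widebar\nu_n,\widebar\nu_1/q\right)=\widebar{\nu'}.$$
Granting the claim, $g(\widebar\nu)$ is a multiple of $E^*_\mu(\widebar{\nu'})$, which vanishes, and uniqueness in \cref{thm:Knop_Sahi} finishes the proof.

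\emph{Proof of the key claim.} The $q$-exponents match immediately. For the last coordinate, we need $k_n(\nu')=k_1(\nu)$. Indeed,
$$k_n(\nu')=\#\{j<n:\nu'_j>\nu_1-1\}=\#\{j>1:\nu_j\ge\nu_1\}=k_1(\nu).$$
For $2\le i\le n$, we need $k_{i-1}(\nu')=k_i(\nu)$. The entries of $\nu'$ at positions $j\neq n$ match the original entries $\nu_2,\dots,\nu_n$ with the same relative order, so they contribute the same counts (strict comparison on the left, weak on the right) as in $k_i(\nu)$ for original indices $j\ge2$. The remaining entry is $\nu'_n=\nu_1-1$, which lies to the right of position $i-1$ and is counted iff $\nu_1-1\ge\nu_i$. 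This is equivalent to $\nu_1>\nu_i$, which is exactly the condition under which the original index $j=1<i$ is counted in $k_i(\nu)$.

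I expect this bookkeeping of the statistics $k_i$, and in particular getting the strict-versus-weak inequalities right under the cyclic shift with the $-1$, to be the only delicate step. The rest is formal.
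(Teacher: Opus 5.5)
Your proof is correct. The normalization count and both vanishing cases go through, and the shift identity $\widebar{\nu'}=(\widebar{\nu}_2,\dots,\widebar{\nu}_n,\widebar{\nu}_1/q)$, including the strict-versus-weak bookkeeping for the $k_i$, checks out, so uniqueness in \cref{thm:Knop_Sahi} (correctly read with $\widebar{\nu}$ in place of $\widetilde{\nu}$) gives the identity. The paper does not prove this statement but cites Knop and Sahi, and verifying the interpolation characterization via compatibility of the spectral vectors under the cyclic shift is, to my recollection, the standard way this recurrence is proved.
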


\section{The proof of \texorpdfstring{\cref{thm:weak-reordering}}{}}\label{app:permuted_basement}
In this appendix we use analogues of results from \cite{AlexanderssonSawhney2019,AyyerMartinWilliams2025} to 
prove \cref{thm:weak-reordering}. If $\lambda$ is a partition, we will denote by $\lambda'$ \textit{the conjugate} of $\lambda$.
Recall that $e_{\lambda}^*$ is the interpolation elementary symmetric function defined in \cref{def:int_elem}. Moreover, if $\mu=(\mu_1,\dots,\mu_n)$ is a composition, we denote $\mu^{\rev}:=(\mu_n,\dots,\mu_1)$.

\begin{thm}[{\cite[Theorem 18]{AlexanderssonSawhney2019}}]
\label{thm:E_partition_hom}
    For any partition $\lambda$, we have
    $$E^{}_{\lambda^{\rev}}(x_1,\dots,x_n;1,t)=e_{\lambda'}(x_1,\dots,x_n).$$
\end{thm}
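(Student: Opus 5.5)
The plan is to argue by induction on $\lambda_1$, the number of columns of $\lambda$. Each step adds one column of height $k$, i.e. it passes from $\lambda$ to $\lambda+(1^k,0^{n-k})$, where $k$ is the number of nonzero parts of the new $\lambda$. The tools are the homogeneous versions of the two recursions recalled in \cref{app:shape_permuting}. For the first I take the top homogeneous part of \cref{thm:KS_recurrence}. At $q=1$ it reads
$$E_{(\mu_n+1,\mu_1,\dots,\mu_{n-1})}(x_1,\dots,x_n;1,t)=x_1\,E_\mu(x_2,\dots,x_n,x_1;1,t).$$
The second is the homogeneous shape-permuting formula, i.e. the top part of \cref{prop:shape} at $q=1$. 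The base case is $\lambda=0$, where $E_0=1=e_\emptyset$. The first genuinely nontrivial case is the single column $\lambda=(1^k,0^{n-k})$, where the claim is $E_{(0^{n-k},1^k)}(\bfx;1,t)=e_k(\bfx)$. I would check this directly: apply the shape-permuting operators to $E_{(1^k,0^{n-k})}=x_1\cdots x_k$, the latter being the top part of \cref{eq:factorization_one_row} since $F_\lambda=E_\lambda$ for a partition.

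For the inductive step, write $\tilde\lambda=\lambda+(1^k)$ and start from $\lambda^{\rev}=(0^{n-k},\lambda_k,\dots,\lambda_1)$. Applying the rotation recurrence $k$ times moves the last $k$ entries to the front, each increased by $1$, and yields
$$E_{(\lambda_k+1,\dots,\lambda_1+1,0^{n-k})}(\bfx;1,t)=x_1\cdots x_k\,E_{\lambda^{\rev}}(\text{cyclically shifted }\bfx;1,t)=x_1\cdots x_k\,e_{\lambda'}(\bfx).$$
Here the last equality uses the induction hypothesis together with the symmetry of $e_{\lambda'}$. It then remains to move the $n-k$ zeros from the right end to the left end, so as to reach $\tilde\lambda^{\rev}$. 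Each move is a single shape-permuting step $E_{s_j\nu}=\bigl(T_j+\tfrac{1-t}{1-t^{r_j(\nu)}}\bigr)E_\nu$ at $q=1$, applied with $\nu_j>\nu_{j+1}=0$.

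The key observation for this last phase is that $T_j$ commutes with multiplication by symmetric polynomials. Hence the whole sequence of operators acts on $x_1\cdots x_k\,e_{\lambda'}$ as $e_{\lambda'}$ times its action on $x_1\cdots x_k$. Exactly the same sequence of swaps, performed on the single column, carries $E_{(1^k,0^{n-k})}=x_1\cdots x_k$ to $E_{(0^{n-k},1^k)}=e_k$. So, provided the scalars $r_j(\nu)$ agree along the two chains, we would obtain $E_{\tilde\lambda^{\rev}}=e_{\lambda'}e_k=e_{\tilde\lambda'}$, which completes the induction.

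I expect the main obstacle to be this comparison of the constants $r_j(\nu)$. By \eqref{eq:def_r}, $r_j(\nu)$ counts entries lying between $0$ and $\nu_j$. In the general chain the nonzero values $\lambda_i+1$ are arbitrary, while in the column chain they are all equal to $1$, so the counts may differ. My first attempt would be to order the swaps cleverly, moving each zero leftward past the block of nonzero entries, so that at each step $r_j(\nu)$ is determined only by the number of zeros and nonzero entries to either side of position $j$ and is therefore the same in both chains. Failing that, I would fall back on the strategy of \cite{AlexanderssonSawhney2019}. There one shows directly that $E_{\tilde\lambda^{\rev}}(\bfx;1,t)$ is symmetric, by checking that $T_iE_{\tilde\lambda^{\rev}}=tE_{\tilde\lambda^{\rev}}$ at $q=1$. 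One then identifies it with $e_k\,E_{\lambda^{\rev}}$ by comparing a single leading monomial coefficient in the triangular expansion.
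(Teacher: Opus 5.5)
The paper gives no proof of this statement; it is quoted from \cite{AlexanderssonSawhney2019}. Your argument therefore has to stand on its own.

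Its architecture is sound. The $k$-fold rotation giving $x_1\cdots x_k\,e_{\lambda'}(\bfx)$ is correct, as is the fact that $T_j$ commutes with multiplication by symmetric polynomials, and so is the transfer of the zero-moving chain to the single column. The step you flag as the main obstacle is in fact not one. Swapping a nonzero entry with a $0$ never changes the relative order of the nonzero entries, so they stay weakly increasing, $\lambda_k+1\le\dots\le\lambda_1+1$, from left to right. Hence if $\nu_{j+1}=0$ and $\nu_j$ is the $m$-th nonzero entry, every nonzero entry left of $j$ is $\le\nu_j$ and every one right of $j$ is $\ge\nu_j$. Then \eqref{eq:def_r} gives $r_j(\nu)=(m-1)+\#\{i>j:\nu_i=0\}$. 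This is exactly the value in the column chain, for every order of swaps.

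The real gap is the column case $E_{(0^{n-k},1^k)}(\bfx;1,t)=e_k(\bfx)$, which you only promise to ``check directly''. Your inductive step proves precisely $E_{\tilde\lambda^{\rev}}(\bfx;1,t)=e_{\lambda'}(\bfx)\,E_{(0^{n-k},1^k)}(\bfx;1,t)$. So this case is where the entire identification with elementary symmetric functions happens, and it is not a routine check. The intermediate polynomials carry nontrivial coefficients; for instance, $[x_1x_2]\,E_{(0,1,0,1)}(\bfx;1,t)=(1+t+t^2)/(1+t)^2$ for $n=4$. You give no reason why these collapse to $e_k$.

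Here is one way to close it.
\begin{enumerate}
\item The composition $(0^{n-k},1^k)$ arises from a single shape-permuting step at $j=n-k$ from $\nu=(0^{n-k-1},1,0,1^{k-1})$, and $r_{n-k}(\nu)=1$.
\item At $q=1$ the constant in \cref{prop:shape} therefore becomes $\frac{1-t}{1-t}=1$, so $E_{(0^{n-k},1^k)}=(T_{n-k}+1)E_\nu$.
\item The quadratic relation $(T_{n-k}-t)(T_{n-k}+1)=0$ gives $T_{n-k}E_{(0^{n-k},1^k)}=t\,E_{(0^{n-k},1^k)}$, i.e.\ symmetry in $x_{n-k},x_{n-k+1}$.
\item Combined with the standard symmetry of $E_\mu$ in $x_i,x_{i+1}$ whenever $\mu_i=\mu_{i+1}$, the polynomial is fully symmetric.
\item It lies in the span of squarefree monomials of degree $k$, which the $T_j$ preserve, and has coefficient $1$ on $x_{n-k+1}\cdots x_n$. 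Hence it equals $e_k$.
\end{enumerate}
The same computation, $r_i(s_i\mu)=1$ whenever $\mu$ is weakly increasing with $\mu_i<\mu_{i+1}$, shows that $E_{\lambda^{\rev}}(\bfx;1,t)$ is symmetric for every $\lambda$. That is the missing ingredient your fallback route would need.
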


We now give  a dehomogenization of this theorem.

\begin{thm}\label{thm:E_partition}
    For any partition $\lambda$, we have
    $$E^*_{\lambda^{\rev}}(x_1,\dots,x_n;1,t)=e^*_{\lambda'}(x_1,\dots,x_n;t).$$
\end{thm}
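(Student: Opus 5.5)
Since $\lambda^{\rev}$ is an antipartition, $E^*_{\lambda^{\rev}}$ can be reached from one-column polynomials by Knop--Sahi raising operators, so we argue by induction on $\lambda_1$. The key tool is \cref{thm:KS_recurrence} at $q=1$:
$$E^*_{(\mu_n+1,\mu_1,\dots,\mu_{n-1})}(\bfx;1,t)=\Big(x_1-\tfrac{1}{t^{n-1}}\Big)E^*_\mu(x_2,\dots,x_n,x_1;1,t),$$
together with \cref{prop:T_i-F_star}, \cref{prop:shape} and \cref{lem:P_column}.

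\textbf{Base case: one column.} Let $\lambda=(1^m,0^{n-m})$, so $\lambda^{\rev}=(0^{n-m},1^m)$ and $\lambda'=(m)$. Since $\lambda^{\rev}$ is antidominant, $E^*_{\lambda^{\rev}}$ is symmetric. I would prove this by checking the vanishing conditions, or via the Hecke relation $T_iE^*=tE^*$ for antidominant indices, which follows from \cref{prop:shape}. A symmetric element of the span of $\{F^*_\mu:\mu\in S_n(\lambda)\}$ (\cref{prop:Vstar}) with leading coefficient $1$ is $P^*_\lambda$. By \cref{lem:P_column}, $P^*_\lambda=e^*_m$.

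\textbf{Inductive step.} For a general partition, write $\lambda=\kappa+(1^k)$ with $k=\lambda'_1$, where $\kappa$ is $\lambda$ with its first column removed. The aim is
$$E^*_{\lambda^{\rev}}(\bfx;1,t)=e^*_k(\bfx;t)\,E^*_{\kappa^{\rev}}(\bfx;1,t),$$
after which induction gives $e^*_{\lambda'}$. This is the interpolation analogue of the factorization $E_{\lambda^{\rev}}=e_k E_{\kappa^{\rev}}$ at $q=1$.

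To obtain it, apply the Knop--Sahi raising operator $\Phi f=(x_1-t^{-(n-1)})f(x_2,\dots,x_n,x_1)$ a total of $k$ times, interleaved with shape-permuting operators $T_j+\frac{1-t}{1-t^{r}}$ (the case $q=1$ of \cref{prop:shape}; here $r\ge1$, so this is fine). Starting from $E^*_{\kappa^{\rev}}$, this produces $E^*_{\lambda^{\rev}}$ by adding $1$ to the last $k$ entries. The plan is to compare this with the same sequence applied to $E^*_{0^n}=1$, which yields $e^*_k$ by the base case, and to show the operators act "multiplicatively."

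\textbf{Main obstacle.} Multiplicativity needs the symmetric factor to commute appropriately with $T_j$ and $\Phi$, and neither is automatic. Multiplication by a symmetric function commutes with $T_j$, but the constants $\frac{1-t}{1-t^r}$ and the cyclic shift in $\Phi$ do not factor this way.

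I therefore expect the cleanest route to be the vanishing characterization instead. Both sides have top homogeneous part $e_{\lambda'}$ by \cref{thm:E_partition_hom} together with the statement that the top part of $e^*_\mu$ is $e_\mu$. By the alternative description of $\boldsymbol\psi$ in \cref{def:dehom}, it then suffices to show that $e^*_{\lambda'}(\bfx;t)$ vanishes at $\bar\nu|_{q=1}$ for every composition $\nu$ with $|\nu|<|\lambda|$. At $q=1$, $\bar\nu=(t^{-k_i(\nu)})_i$ depends only on the relative order of $\nu$, and these points are not separated by degree, so this reduction is delicate.

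If the vanishing approach fails, I would fall back on induction using only \cref{thm:KS_recurrence} and shape-permuting operators. In that case the expected hardest step is verifying the identity $E^*_{s_j\nu}=E^*_\nu$ that $q=1$ forces whenever $\nu_j,\nu_{j+1}$ are both $\ge1$ and agree in the columns being built, which I would check directly from \cref{prop:shape}.
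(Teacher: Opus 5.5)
The proposal as written proves only the one-column case. The inductive step $E^*_{\lambda^{\rev}}(\bfx;1,t)=e^*_k(\bfx;t)\,E^*_{\kappa^{\rev}}(\bfx;1,t)$ is never established, so this is a genuine gap.

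You correctly observe that the raising and shape-permuting operators do not act multiplicatively. However, the replacement you suggest does not just become delicate at $q=1$; it fails. At $q=1$ the points $\overline{\nu}=(t^{-k_i(\nu)})_i$ depend only on the relative order of $\nu$, so they form a finite set. Vanishing on that set, plus a prescribed top component, cannot determine the lower-order terms. Already for $n=1$ every $\overline{\nu}$ equals $1$, and both $e^*_{(1,1)}=(x_1-1)^2$ and $x_1^2-1$ have top part $x_1^2$ and vanish there. The uniqueness property characterizing $\boldsymbol{\psi}$ holds only for generic $q$ and cannot be invoked after specializing. The fallback in your last paragraph is too vague to assess.

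The missing idea is to dehomogenize at generic $q$ and specialize afterwards, which is what the paper does. By \cref{prop:Vstar} and \cref{lem:dehomog}, $E_{\lambda^{\rev}}=\sum_{\mu\in S_n(\lambda)}d_\mu F_\mu$ and $E^*_{\lambda^{\rev}}=\sum_\mu d_\mu F^*_\mu$ with the same coefficients. At $q=1$, \cref{thm:E_partition_hom} gives $\sum_\mu d_\mu(1,t)F_\mu(\bfx;1,t)=e_{\lambda'}=P_\lambda(\bfx;1,t)=\sum_\mu F_\mu(\bfx;1,t)$, so $d_\mu(1,t)=1$. Then \cref{prop:symmetrization} and \cref{thm:factorization} yield $E^*_{\lambda^{\rev}}(\bfx;1,t)=P^*_\lambda(\bfx;1,t)=e^*_{\lambda'}(\bfx;t)$.

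Alternatively, your base case already contains a full proof, because it never uses that $\lambda$ is a single column:
at each strict ascent $\nu_j<\nu_{j+1}$ of $\nu=\lambda^{\rev}$ one has $r_j(s_j\nu)=1$. So at $q=1$, \cref{prop:shape} reads $E^*_\nu=(T_j+1)E^*_{s_j\nu}$, and the quadratic relation gives $(T_j-t)E^*_\nu=0$. Together with $T_jE^*_\nu=tE^*_\nu$ at equal entries, this makes $E^*_{\lambda^{\rev}}(\bfx;1,t)$ symmetric. Since \eqref{eq:normalization_F} gives $d_\mu=[x^\mu]E^*_{\lambda^{\rev}}$, symmetry forces $d_\mu(1,t)=[x^{\lambda^{\rev}}]E^*_{\lambda^{\rev}}=1$. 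This bypasses \cref{thm:E_partition_hom} entirely.

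Note that this symmetry is a $q=1$ phenomenon. In general the shape-permuting constant is $\frac{1-t}{1-q^{\nu_{j+1}-\nu_j}t}$; for example $E^*_{(0,1)}=x_2-1+\frac{1-t}{1-qt}(x_1-t^{-1})$. So your claim that antidominance alone makes $E^*_{\lambda^{\rev}}$ symmetric is false as stated, and the symmetry cannot be checked via vanishing conditions.
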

\begin{proof}
From \cref{prop:Vstar}, we can write an expansion
$$E_{\lambda^{\rev}}(\bfx;q,t)=\sum_{\mu\in S_n(\lambda)}d_{\mu}(q,t)F_\mu(\bfx;q,t),$$
for some coefficients $d_\mu\in \QQ(q,t)$.
Applying the dehomogenization map \cref{lem:dehomog}, we get
\begin{equation}\label{eq:E_F}
  E^*_{\lambda^{\rev}}(\bfx;q,t)=\sum_{\mu\in S_n(\lambda)}d_{\mu}(q,t)F^*_\mu(\bfx;q,t).  
\end{equation}
    But we know from \cref{prop:symmetrization} and \cref{thm:factorization} that
    $$e^*_{\lambda'}(\bfx;t)=\sum_{\mu\in S_n(\lambda)}F^*_\mu(\bfx;1,t).$$
    Combining this with  \cref{thm:E_partition},       we get
    $$E^*_{\lambda^{\rev}}(\bfx;1,t)=\sum_{\mu\in S_n(\lambda)}F^*_\mu(\bfx;1,t).$$
    Comparing this with \cref{eq:E_F}, and the fact that, interpolation ASEP $F_\mu^*$ polynomials are a basis (see \cref{rmk:basis_1}), we deduce that $d_\mu(1,t)=1$ for any $\mu\in S_n(\lambda)$ as desired.
\end{proof}

\begin{thm}\label{thm:recoloring}
Fix a weakly order-preserving function $\phi:\NN\rightarrow\NN$, and two partitions $\lambda$ and $\kappa$ such that $\phi(\lambda)=\kappa$.
Consider a composition  $\eta\in S_n(\kappa)$. Let $\mu \in S_n(\lambda)$ be the lexicographically smallest composition such that $\phi(\mu)=\eta$ (equivalently, $\mu \in S_n(\lambda)$ such that $\phi(\mu)=\eta$ and parts that project to the same integer appear in increasing order in $\mu$).
 We then have
\begin{equation}\label{eq:adjacent}
    \frac{E^*_\mu(\bfx;1,t)}{E^*_{\eta}(\bfx;1,t)}=\frac{e^*_{\lambda'}(\bfx;t)}{e^*_{\kappa'}(\bfx;t)}.
\end{equation}
\end{thm}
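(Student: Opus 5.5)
Throughout, I work at $q=1$ and follow the strategy of \cite{AlexanderssonSawhney2019,AyyerMartinWilliams2025}.

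\textbf{Reduction to anti-dominant orderings.} First reduce to the case where $\eta$ is weakly increasing, that is $\eta=\kappa^{\rev}$. For this choice, $\mu$ is the lexicographically smallest preimage, which is $\lambda^{\rev}$. Then \cref{thm:E_partition} gives
$$E^*_{\lambda^{\rev}}(\bfx;1,t)=e^*_{\lambda'}(\bfx;t)\quad\text{and}\quad E^*_{\kappa^{\rev}}(\bfx;1,t)=e^*_{\kappa'}(\bfx;t),$$
so \eqref{eq:adjacent} holds in this case.

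\textbf{Propagation by shape permuting operators.} Next propagate the identity to a general $\eta\in S_n(\kappa)$ by induction on the number of inversions of $\eta$. Suppose $\eta_j<\eta_{j+1}$; this is the index used to go from $s_j\eta$ to $\eta$, or more precisely: given $\eta$ with $\eta_j>\eta_{j+1}$, we derive the case of $\eta$ from the case of $s_j\eta$, which has fewer inversions. Let $\mu$ be the chosen preimage of $s_j\eta$. Because $\phi(\mu_j)<\phi(\mu_{j+1})$, we have $\mu_j<\mu_{j+1}$. Moreover, $s_j\mu$ is exactly the lexicographically minimal preimage of $\eta$: swapping two positions carrying different $\phi$-values does not disturb the increasing order within each fibre.

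Apply \cref{prop:shape} to the compositions $s_j\mu$ and $\eta$, both of which have a descent at $j$:
$$E^*_{\mu}=\Bigl(T_j+\tfrac{1-t}{1-t^{r_j(s_j\mu)}}\Bigr)E^*_{s_j\mu},\qquad E^*_{s_j\eta}=\Bigl(T_j+\tfrac{1-t}{1-t^{r_j(\eta)}}\Bigr)E^*_{\eta}.$$
At $q=1$ we have $q^{\nu_j-\nu_{j+1}}=1$, so the constants become $\frac{1-t}{1-t^{r}}$. We want to invert these operators and compare the two results.

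\textbf{Main obstacle: equality of the $r$-statistics.} The key combinatorial claim is $r_j(s_j\mu)=r_j(\eta)$. With this claim the argument closes as follows. Both shape-permuting operators are the same invertible operator $A$, built from $T_j$ and a scalar. We have $E^*_{\mu}=c\,E^*_{s_j\eta}$ with $c=e^*_{\lambda'}/e^*_{\kappa'}$, and $c$ is symmetric, so it commutes with $T_j$. Hence
$$E^*_{s_j\mu}=A^{-1}E^*_\mu=c\,A^{-1}E^*_{s_j\eta}=c\,E^*_\eta.$$
For invertibility, use the quadratic relation $(T_j-t)(T_j+1)=0$: the operator $T_j+\gamma$ is invertible unless $\gamma\in\{-t,1\}$, and this can be checked for the values $\frac{1-t}{1-t^r}$ (including $r=0$, where one should instead argue directly).

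To verify the claim, compare the defining counts \eqref{eq:def_r}. In $s_j\mu$ the relevant values are the $\mu_k$ lying in the window between $\mu_j$ and $\mu_{j+1}$, and these correspond under $\phi$ to the values between $\phi(\mu_j)$ and $\phi(\mu_{j+1})$. The boundary cases, where a value is equal to $\eta_j$ or to $\eta_{j+1}$, are handled by the lexicographic-minimality convention. An equal value to the left of position $j$ or to the right of position $j+1$ must sit correctly relative to the strict and weak inequalities in $r_j$. I expect this bookkeeping of ties to be the delicate part.

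If the equality $r_j(s_j\mu)=r_j(\eta)$ fails in some tie configuration, I would fall back to the homogeneous statement \cite{AyyerMartinWilliams2025}. That statement gives an expansion of $E_\mu$ versus $E_\eta$ times $e_{\lambda'}/e_{\kappa'}$. I would then transport it with the dehomogenization map $\boldsymbol\psi$ (\cref{lem:dehomog}), after checking that multiplication by the ratio of the $e^*$ factors intertwines appropriately.
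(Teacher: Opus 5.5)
Your base case is correct. Your combinatorial claim is also correct: the claim $r_j(s_j\mu)=r_j(\eta)$ holds, because the increasing-within-fibres convention settles each tie case $\eta_k\in\{\eta_j,\eta_{j+1}\}$, with $k<j$ or $k>j+1$, exactly as the strict and weak inequalities in \eqref{eq:def_r} require.

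\textbf{The gap is the inversion step.} You start from the anti-dominant ordering $\kappa^{\rev}$ and move towards compositions with more inversions, so at each step you must invert $A=T_j+\frac{1-t}{1-t^{r}}$. Note that $r_j(\nu)\geq 1$ always, since the index $k=j+1$ is always counted; so $r=0$ never arises. When $r=1$, the constant equals $1$ and $A=T_j+1$. This is exactly one of the non-invertible values you listed. By $(T_j-t)(T_j+1)=0$, this operator kills $tx_j-x_{j+1}$, and its image consists of polynomials symmetric in $x_j,x_{j+1}$.

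This is not a corner case. Take $\lambda=(2,1,0)$, $\kappa=(1,1,0)$, $\eta=(1,0,1)$, $j=1$. Your very first step has $s_j\mu=(1,0,2)$ and $r_1(s_j\mu)=r_1(\eta)=1$. Here the identity you already know, $E^*_\mu=cE^*_{s_j\eta}$, is simply the image under $T_j+1$ of the identity you want. All you can extract from it is $(T_j+1)\bigl(E^*_{s_j\mu}-cE^*_\eta\bigr)=0$, and $T_j+1$ cannot be undone.

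The dehomogenization fallback does not repair this. Showing that $\boldsymbol{\psi}$ sends $(e_{\lambda'}/e_{\kappa'})E_\eta$ to $(e^*_{\lambda'}/e^*_{\kappa'})E^*_\eta$ is essentially the statement being proved.

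\textbf{How to fix it.} Use \cref{prop:shape} in the forward direction, where no inversion is needed. Suppose $\eta_j>\eta_{j+1}$ and $\mu$ is the minimal preimage of $\eta$. Then $\mu_j>\mu_{j+1}$, $s_j\mu$ is the minimal preimage of $s_j\eta$, and $r_j(\mu)=r_j(\eta)$ by your tie analysis. Applying $T_j+\gamma$ to $E^*_\mu=cE^*_\eta$ gives $E^*_{s_j\mu}=cE^*_{s_j\eta}$.

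The cost is that the base case becomes the dominant ordering $\eta=\kappa$, with $\mu=\inc(\lambda,\phi)$. \cref{thm:E_partition} does not cover this case, and handling it is the idea your proposal is missing. The paper proves it by induction on $|\kappa|$ using the Knop--Sahi recurrence \cref{thm:KS_recurrence} at $q=1$:
\begin{itemize}
\item Cycle the top block $\mu^{(1)}$ to the end and lower it by one. The prefactors $x_1-t^{1-n}$ cancel in the ratio, and the $e^*$-ratio is symmetric, so the cyclic shift of variables is harmless.
\item The parts of $\mu^{(1)}$ strictly exceed all other parts. Hence this operation removes a column of length $m_1$ from both $\lambda$ and $\kappa$, leaving the right-hand side unchanged.
\item The cases $a_1-1=a_2$ and $a_1-1>a_2$ are then treated separately, returning to the setting of the theorem with smaller $|\kappa|$.
\end{itemize}
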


\begin{proof}
We prove the result for any $\lambda$ and $\kappa$, and for any composition $\mu \in S_n(\lambda)$ and $\eta=\phi(\mu)\in S_n(\kappa)$ satisfying the conditions of the theorem. We proceed by induction on the size of $\kappa$.  If $\kappa=(0,\dots,0)$, then $\eta=(0,\dots,0)$, and we have that $\mu$ is weakly increasing ($\mu^{\rev}$ is a partition). As a consequence, applying \cref{thm:E_partition}, we get
\begin{equation}
    \frac{E^*_\mu(\bfx;1,t)}{E^*_{\eta}(\bfx;1,t)}=e^*_{\lambda'}(\bfx;t)=\frac{e^*_{\lambda'}(\bfx;t)}{e^*_{\kappa'}(\bfx;t)}.
\end{equation}

We now fix $\kappa$ and $\lambda$, and we write $\kappa=(a_1^{m_1},\dots,a_\ell^{m_\ell})$, for some $m_i>0$, $a_1>0$ and $a_1>a_2>\dots>a_\ell\geq 0$. We want to prove the result for any $\mu\in S_n(\lambda)$ and $\eta\in S_n(\kappa)$ satisfying the condition of the theorem. It is enough to prove this for $\eta=\kappa$, and the other cases will follow by the shape-permuting formula \cref{prop:shape}. In this case, we know that $\mu$ is of the form 
$\mu=(\mu^{(1)},\dots,\mu^{(\ell)})$, where $\mu^{(i)}$ is a weakly increasing sequence of length $m_i$, and if $i<j$ then all parts of $\mu^{(i)}$ are greater than all parts of $\mu^{(j)}$ (the projection $\phi$ sends then all parts of $\mu^{(i)}$ to $a_i$, for $1\leq i\leq \ell$).
We want to show that the ratio $\frac{E^*_\mu(\bfx;1,t)}{E^*_{\kappa}(\bfx;1,t)}$ can be written in the form of \cref{eq:adjacent}. But using the Knop–Sahi recurrence (\cref{thm:KS_recurrence}), it is enough to prove this result for
$$\frac{E^*_{\nu}(\bfx;1,t)}{E^*_{\zeta}(\bfx;1,t)},$$
where $\nu=(\mu^{(2)},\dots,\mu^{(\ell)},\mu^{(1)}-\mathbf{1})$ and $\zeta=(a_2^{m_2},\dots,a_\ell^{m_\ell},(a_1-1)^{m_1})$.

Now there are two cases:
\begin{itemize}
    \item if $a_1-1=a_2$, we check that in this case we can find a projection $\phi'$ sending $\nu$ on $\zeta$, and that these compositions satisfy the conditions of the theorem. 
    More precisely, we choose $\phi'$ such that it sends all the parts of $\mu^{(i)}$ to $a_i$ for $i\geq 2$, and the parts of $\mu^{(1)}-\mathbf{1}$ to $a_1-1=a_2$ (note that the assumption $a_1-1=a_2$ is crucial for such projection to exist, since the parts of $\mu^{(1)}-\mathbf{1}$ might intersect with the parts of $\mu^{(2)}$). 
    Since $|\zeta|<|\kappa|$,  we can apply the induction hypothesis.

    \item if $a_1-1>a_2$, we can write
    $$\frac{E^*_{\nu}(\bfx;1,t)}{E^*_{\zeta}(\bfx;1,t)}=
    \frac{E^*_{\nu}(\bfx;1,t)}{E^*_{(a_2^{m_2},\dots,a_\ell^{m_\ell},a_2^{m_1})}(\bfx;1,t)}\cdotp
    \frac{E_{(a_2^{m_2},\dots,a_\ell^{m_\ell},a_2^{m_1})}(\bfx;1,t)}{E^*_{\zeta}(\bfx;1,t)}$$
    and we apply the induction hypothesis separately on the first term and on the inverse of the second term.\qedhere
\end{itemize}
\end{proof}

In the following, we fix a weakly order-preserving function $\phi:\NN\rightarrow\NN$, and two partitions $\lambda$ and $\kappa$ such that $\phi(\lambda)=\kappa$.

\begin{definition}\label{def:inc}
Fix $\phi$, $\lambda$ and $\kappa$ as above.
Let $\inc(\lambda,\phi)$ be the lexicographically smallest $\rho\in S_n(\lambda)$ such that $\phi(\rho)=\kappa$. 
\end{definition}

\begin{thm}\label{thm:E_inc_E}
    Fix $\kappa$ and $\lambda$ as in \cref{def:inc}. At $q=1$, $E^*_{\inc(\lambda,\phi)}$ is a symmetric function multiple of $E^*_\kappa=F^*_\kappa$.
    \end{thm}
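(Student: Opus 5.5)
This statement follows almost directly from \cref{thm:recoloring}. The plan is to specialize that result to $\eta=\kappa$ and then observe that the resulting ratio is a symmetric polynomial. Throughout, $\mu:=\inc(\lambda,\phi)$.

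First, I check that $\mu$ is exactly the composition singled out in \cref{thm:recoloring} when $\eta=\kappa$. By \cref{def:inc}, $\mu$ is the lexicographically smallest $\rho\in S_n(\lambda)$ with $\phi(\rho)=\kappa$. Since $\kappa$ is a partition and $\phi$ is weakly order-preserving, this $\rho$ has the block form used in the proof of \cref{thm:recoloring}:
\[
\mu=(\mu^{(1)},\dots,\mu^{(\ell)}),
\]
where each block $\mu^{(i)}$ is the weakly increasing list of the parts of $\lambda$ sent to $a_i$. So the hypotheses of \cref{thm:recoloring} hold, and \eqref{eq:adjacent} gives
\[
E^*_{\mu}(\bfx;1,t)=\frac{e^*_{\lambda'}(\bfx;t)}{e^*_{\kappa'}(\bfx;t)}\,E^*_{\kappa}(\bfx;1,t).
\]
Finally, $E^*_\kappa=F^*_\kappa$ because $\kappa$ is a partition; this was noted right after \cref{thm:Knop_Sahi}.

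The remaining issue is that the quotient $e^*_{\lambda'}/e^*_{\kappa'}$ is a priori only a rational function. I want to show it is a symmetric polynomial. Each $e^*_k$ is symmetric: by \eqref{eq:P_column} it equals the interpolation Macdonald polynomial $P^*_{(1^k,0^{n-k})}$, which is symmetric by \cref{def:intMacdonald}. To get polynomiality I use a column-containment argument. A weakly order-preserving map with $\phi(0)=0$ can only merge or shift values, so I expect the multiset of column lengths of $\kappa$ to be a sub-multiset of that of $\lambda$. Concretely, for each $c\ge1$ the column length $\kappa'_c=\#\{i:\phi(\lambda_i)\ge c\}$ equals $\#\{i:\lambda_i\ge d_c\}=\lambda'_{d_c}$, where $d_c=\min\{d:\phi(d)\ge c\}$. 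The $d_c$ are strictly increasing in $c$, so they pick out distinct columns of $\lambda$. Hence
\[
\frac{e^*_{\lambda'}}{e^*_{\kappa'}}=\prod_{d\notin\{d_c\}}e^*_{\lambda'_d},
\]
which is a product of symmetric polynomials.

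The main obstacle is that the statement allows $\phi(0)\neq0$, and then the column containment above can fail. In that case I would first replace $\lambda$ and $\kappa$ by their reductions with the smallest part subtracted, as in \cref{rmq:twin_particles}. If that reduction does not go through cleanly, I would fall back on the Knop--Sahi factor bookkeeping in the proof of \cref{thm:recoloring}, tracking which factors $\bigl(x_1-t^{-(n-1)}\bigr)$ are peeled off by \cref{thm:KS_recurrence}, so as to write the ratio directly as a product of $e^*$'s. As an alternative check, if $\phi(0)=0$ fails I could use the convention $e^*_0=1$ and verify that no factor with a negative exponent arises.
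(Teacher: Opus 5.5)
Your first paragraph is exactly the paper's proof. The paper applies \cref{thm:recoloring} with $\eta=\kappa$ and $\mu=\inc(\lambda,\phi)$, which meets the hypothesis by definition, and uses $E^*_\kappa=F^*_\kappa$. Adding your observation that each $e^*_k$ is symmetric already gives the statement in the sense the paper uses. The multiplier $h=e^*_{\lambda'}/e^*_{\kappa'}$ is a symmetric \emph{rational} function. In the proof of \cref{thm:weak-reordering}, $h$ only has to commute with the Hecke operators $T_i$, and that holds for any function symmetric in $x_i,x_{i+1}$.

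Your polynomiality add-on, however, is false. The step that breaks is the claim that the $d_c$ are strictly increasing. They are only weakly increasing, and $d_c=d_{c+1}$ whenever $c\notin\phi(\NN)$, even when $\phi(0)=0$. Here is a concrete case:
\begin{itemize}
\item Take $n=2$, $\phi(0)=0$, $\phi(d)=d+1$ for $d\geq 1$, and $\lambda=(1,0)$. Then $\kappa=(2,0)$, $\inc(\lambda,\phi)=(1,0)$, and $d_1=d_2=1$.
\item By \cref{cor:eta_minus}, or directly from \cref{thm:KS_recurrence}, $E^*_{(2,0)}(\bfx;1,t)=e^*_1(\bfx;t)\,E^*_{(1,0)}(\bfx;1,t)$.
\item Hence $E^*_{\inc(\lambda,\phi)}=\frac{1}{e^*_1}E^*_\kappa$. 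No nonzero polynomial multiplier can exist, since the degrees are $1$ and $2$.
\end{itemize}
So the ``remaining issue'' is not something to resolve; the polynomiality claim should be dropped. The obstacle you flagged ($\phi(0)\neq 0$) is not the real one, and your fallback plans are unnecessary. With that paragraph deleted, your argument coincides with the paper's one-line proof.
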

 
\begin{proof}
    We apply \cref{thm:recoloring} with $\eta=\kappa$ and $\mu=\inc(\lambda,\phi)$.
\end{proof}

Recall that for $\eta\in S_n(\kappa)$, 
\begin{equation}\label{eq:G}
G^*_\eta(\bfx;t):=\sum_{\rho\in S_n(\lambda):\, \phi(\rho)=\eta} F^*_\rho(\bfx;1,t),
\end{equation}
and that $G_{\eta}$ is the top homogeneous part of $G^*_\eta$.

\begin{prop}[{\cite[Proposition 4.16]{AyyerMartinWilliams2025}}]\label{prop:hom_G}
    We have 
$G_\kappa(\bfx;t)=E_{\inc(\lambda,\phi)}(\bfx;1,t).$
\end{prop}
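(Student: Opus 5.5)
The plan is to rewrite both sides using the Hecke operators $T_i$ of \eqref{eq:Hecke} acting on the homogeneous ASEP polynomials, and then compare. Only top homogeneous parts appear here, so everything stays in the homogeneous world at $q=1$. The relevant facts are \cref{prop:T_i-F_star} and \cref{prop:shape}, and their top-degree versions hold for $F_\mu$ and $E_\mu$ because $T_i$ preserves degree.

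\textbf{Step 1: write $G_\kappa$ as a symmetrizer.} Since $\kappa$ is a partition, the positions $\{i:\kappa_i=a\}$ form consecutive blocks $B_1,\dots,B_\ell$. Let $W_\kappa=S_{B_1}\times\dots\times S_{B_\ell}$ be the corresponding Young subgroup. The compositions $\rho\in S_n(\lambda)$ with $\phi(\rho)=\kappa$ are exactly the rearrangements of $\inc(\lambda,\phi)$ inside each block. Let $\mathrm{dec}$ be the rearrangement that is weakly decreasing on each block. By item (1) of \cref{prop:T_i-F_star}, each such $\rho$ satisfies $F_\rho=T_w F_{\mathrm{dec}}$, where $w$ is the minimal-length element of $W_\kappa$ with $w\cdot\mathrm{dec}=\rho$. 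Using item (2) to absorb repeated parts, this gives
\[
G_\kappa=\frac{1}{c}\Big(\sum_{w\in W_\kappa}T_w\Big)F_{\mathrm{dec}},
\]
where $c$ is the appropriate Poincar\'e polynomial of the stabilizer of $\mathrm{dec}$ in $W_\kappa$. In particular, $G_\kappa$ is annihilated by $T_i-t$ for every simple transposition $s_i\in W_\kappa$.

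\textbf{Step 2: the same structure for $E_{\inc}$.} I would show that $E_{\inc(\lambda,\phi)}(\bfx;1,t)$ also satisfies $T_iE_{\inc}=tE_{\inc}$ for every $s_i\in W_\kappa$. I would use the top-degree version of \cref{prop:shape} at $q=1$. When $\nu_j>\nu_{j+1}$ we have $q^{\nu_j-\nu_{j+1}}=1$, so it reads
\[
E_{s_j\nu}=\Big(T_j+\frac{1-t}{1-t^{r_j(\nu)}}\Big)E_\nu .
\]
Combining this with the quadratic relation $(T_j-t)(T_j+1)=0$, I would check that if $\nu$ is increasing at $(j,j+1)$ inside a block, then $E_\nu$ lies in the $t$-eigenspace of $T_j$ up to terms that vanish at $q=1$. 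The hope is that the specialization $q=1$ forces the $(-1)$-eigencomponent to vanish; this is the analogue of the Alexandersson--Sawhney observation behind \cref{thm:E_partition_hom}.

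\textbf{Step 3: identify the two sides.} Expand $E_{\inc}$ in the basis $\{F_\rho:\rho\in S_n(\lambda)\}$; this is possible by \cref{prop:Vstar} in top degree. The leading term is $F_{\inc}$ with coefficient $1$, by triangularity and the normalization \eqref{eq:normalization_F}. Invariance under $T_i$ for $s_i\in W_\kappa$, together with the action of \cref{prop:T_i-F_star}, forces the coefficients to be constant along $W_\kappa$-orbits. The remaining task is to rule out support on $\rho$ with $\phi(\rho)\neq\kappa$. For this I would run the same induction as in \cref{thm:recoloring}: reduce blocks via the cyclic-shift recurrence (the top-degree part of \cref{thm:KS_recurrence} at $q=1$, which is just $x_1\cdot$(cyclic substitution)), and use the base case of \cref{thm:E_partition_hom}. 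There, $E_{\lambda^{\rev}}=e_{\lambda'}=\sum_{\rho\in S_n(\lambda)}F_\rho$ is exactly the statement for $\kappa=(0^n)$.

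\textbf{Main obstacle.} I expect Step 2 to be the main obstacle, namely showing that at $q=1$ the shape-permuting coefficients $\frac{1-t}{1-t^{r_j}}$ conspire to make $E_{\inc}$ block-symmetric in the Hecke sense. The coefficient $r_j$ depends on the whole composition, so the within-block computation is not purely local. If the direct eigenvector argument fails, my fallback is to mirror the proof of \cref{thm:recoloring}: prove the statement first for $\kappa$ with a single nonzero block, and then transport it through the Knop--Sahi-type cyclic recurrence, checking at each step that $G$ and $E_{\inc}$ transform identically.
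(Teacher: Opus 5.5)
\textbf{Verdict: genuine gaps in Steps 2 and 3, both fixable.} The paper gives no proof of this statement; it is quoted from \cite{AyyerMartinWilliams2025}, so I judge your argument on its own. Step 1 is correct. The overall plan can also be completed: show that $E_\iota(\bfx;1,t)$, with $\iota:=\inc(\lambda,\phi)$, is a $t$-eigenvector of every $T_j$ with $s_j\in W_\kappa$, then compare coefficients in the $F$-basis. But the two claims that carry all the content are left open, and the mechanisms you propose for them are not the right ones.

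\textbf{Step 2.} Specializing $q=1$ does not by itself kill the $(-1)$-eigencomponent. Suppose $\nu_j<\nu_{j+1}$, and set $c=\frac{1-t}{1-q^{\nu_{j+1}-\nu_j}t^{r}}$ with $r=r_j(s_j\nu)$. The quadratic relation gives $(T_j-t)E_\nu=(T_j-t)(T_j+c)E_{s_j\nu}=(c-1)(T_j-t)E_{s_j\nu}$. At $q=1$ the factor $c-1$ vanishes if and only if $r=1$. For example, take a single block, $\nu=(2,1,3)$ and $j=2$. Then $r=2$, and $E_\nu(\bfx;1,t)$ is not $T_2$-invariant. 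So the literal claim for an arbitrary ascent inside a block is false.

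The missing observation is that $r_j(s_j\iota)=1$ always. Any $k\neq j+1$ counted in $r_j$ would have $\iota_k$ between $\iota_j$ and $\iota_{j+1}$. Since $\phi$ is weakly order-preserving, such a $k$ lies in the same block, where $\iota$ is weakly increasing, and this contradicts the defining inequalities of $r_j$. Hence $E_\iota=(T_j+1)E_{s_j\iota}$ at $q=1$, and so $(T_j-t)E_\iota=0$. When $\iota_j=\iota_{j+1}$, use the standard fact $T_jE_\iota=tE_\iota$. Your worry about non-locality therefore disappears for $\iota$.

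\textbf{Step 3.} Ruling out support on $\rho$ with $\phi(\rho)\neq\kappa$ is the real issue, and the Knop--Sahi induction you sketch has no mechanism. \cref{thm:KS_recurrence} is a statement about $E$'s, and nothing tells you how the sum $G_\kappa$ of $F$'s transforms under it. This is exactly why \cref{thm:recoloring} only compares $E$'s with $E$'s. ``Checking that $G$ and $E_\iota$ transform identically'' is therefore the whole problem, not a routine verification.

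In fact no induction is needed:
\begin{enumerate}
\item Bubble-sort each block of $\lambda$ (which equals your $\mathrm{dec}$) into increasing order. At each step apply the top-degree form of \cref{prop:shape}, and only for $s_j\in W_\kappa$. At $q=1$ each coefficient $\frac{1-t}{1-t^{r_j}}$ is regular, since $r_j\geq 1$.
\item This writes $E_\iota(\bfx;1,t)$ as a product of operators $T_j+c_j$ with $s_j\in W_\kappa$, applied to $E_\lambda=F_\lambda$.
\item By \cref{prop:T_i-F_star}, $\mathrm{span}\{F_\rho:\phi(\rho)=\kappa\}$ is stable under these $T_j$, so $E_\iota$ lies in that span.
\item With the corrected Step 2, the coefficients are constant on this single $W_\kappa$-orbit. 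They equal $[x^\iota]E_\iota=1$ by \cref{eq:normalization_F}.
\end{enumerate}
Hence $E_\iota(\bfx;1,t)=G_\kappa$, and your Step 1 is not even needed.
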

We then deduce the following theorem for interpolation polynomials.
\begin{prop}\label{prop:G_E}
    We have 
$G^*_\kappa(\bfx;t)=E^*_{\inc(\lambda,\phi)}(\bfx;1,t).$
\end{prop}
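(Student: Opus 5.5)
The plan is to dehomogenize \cref{prop:hom_G} using the operator $\boldsymbol{\psi}$ of \cref{def:dehom}. The only subtlety is that we are working at $q=1$. So we first lift the statement to generic $q$, apply $\boldsymbol{\psi}$ there, and only then specialize. Write $\mu:=\inc(\lambda,\phi)\in S_n(\lambda)$.

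\textbf{Step 1: expand $E_\mu$ and $E^*_\mu$ in the ASEP bases.} By \cref{prop:Vstar} (its homogeneous counterpart, or equivalently taking top homogeneous parts), we expand
\[
E_{\mu}(\bfx;q,t)=\sum_{\rho\in S_n(\lambda)} d_\rho(q,t)\,F_\rho(\bfx;q,t)
\]
with coefficients $d_\rho\in\QQ(q,t)$. Applying \cref{lem:dehomog} then gives
\[
E^*_\mu(\bfx;q,t)=\sum_{\rho\in S_n(\lambda)} d_\rho(q,t)\,F^*_\rho(\bfx;q,t),
\]
with the \emph{same} coefficients.

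\textbf{Step 2: identify the coefficients at $q=1$.} We specialize the homogeneous identity at $q=1$ and compare it with \cref{prop:hom_G}, which states that
\[
E_\mu(\bfx;1,t)=G_\kappa(\bfx;t)=\sum_{\rho\in S_n(\lambda):\,\phi(\rho)=\kappa}F_\rho(\bfx;1,t).
\]
Taking top homogeneous parts shows that $G_\kappa$ is exactly this sum. The homogeneous ASEP polynomials $\{F_\rho(\bfx;1,t)\}_{\rho\in S_n(\lambda)}$ are linearly independent, since they are the top parts of the $F^*_\rho$ and their leading monomials are distinct by \cref{eq:normalization_F}. Hence
\[
d_\rho(1,t)=\mathbbm{1}[\phi(\rho)=\kappa].
\]
Substituting into the interpolation expansion yields
\[
E^*_\mu(\bfx;1,t)=\sum_{\phi(\rho)=\kappa}F^*_\rho(\bfx;1,t)=G^*_\kappa(\bfx;t),
\]
which is the claim.

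\textbf{Main obstacle: specializing at $q=1$.} The delicate point is the legitimacy of setting $q=1$ in both expansions. We must check that each $d_\rho(q,t)$ has no pole at $q=1$, and that $E_\mu, F_\rho, E^*_\mu, F^*_\rho$ all specialize well there.

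\begin{itemize}
\item[(a)] The polynomials themselves specialize well by \cref{rmk:basis_1}: their normalizations are independent of $q$, and the cited theorems are already stated at $q=1$.
\item[(b)] For the coefficients, I would note that $d_\rho(q,t)$ is determined by a unitriangular system. Extract the coefficient of $x^\tau$ for $\tau\in S_n(\lambda)$ in the homogeneous expansion. By \cref{eq:normalization_F}, $[x^\tau]F_\rho=\delta_{\tau,\rho}$, so $d_\rho(q,t)=[x^\rho]E_\mu(\bfx;q,t)$. This is a coefficient of a polynomial that is regular at $q=1$, so $d_\rho$ has no pole there.
\end{itemize}

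This coefficient-extraction argument also gives a cleaner proof of Step 2. Since $E_\mu(\bfx;1,t)=G_\kappa$ and $[x^\rho]G_\kappa=\mathbbm{1}[\phi(\rho)=\kappa]$, we read off $d_\rho(1,t)$ directly, without appealing to linear independence.
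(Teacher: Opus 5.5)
Your proof is correct and follows essentially the paper's argument: both dehomogenize \cref{prop:hom_G} by expanding within $V_\lambda^*$, comparing top homogeneous parts at $q=1$, and transferring the coefficients to the interpolation level (the paper proves \cref{thm:E_partition} the same way). The one difference is that the paper expands $G^*_\kappa$ in the $E^*_\rho$ basis, whereas you expand $E^*_{\inc(\lambda,\phi)}$ in the $F^*_\rho$ basis; this lets you read off $d_\rho=[x^\rho]E_{\inc(\lambda,\phi)}$ directly from \cref{eq:normalization_F}, and it makes the regularity at $q=1$, which the paper passes over silently, immediate.
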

\begin{proof} By definition $G^*_\kappa$ is a linear combination of $F^*_\nu(\bfx;1,t)$ for $\nu\in S_n(\lambda)$. But we know from \cref{prop:Vstar} that for $\nu\in S_n(\lambda)$, $F^*_\nu(\bfx;1,t)$ is a linear combination of $E^*_\rho(\bfx;1,t)$ for $\rho\in S_n(\lambda)$. 
We find the coefficients of this expansion by taking the top homogeneous part and using \cref{prop:hom_G}.
\end{proof}

\begin{proof}[Proof of \cref{thm:weak-reordering}]
We want to prove that for all $\eta\in S_n(\kappa)$, we have at $q=1$
    $$\frac{G^*_\eta(\bfx;t)}{P^*_\lambda(\bfx;1,t)}=\frac{F^*_\eta(\bfx;1,t)}{P^*_\kappa(\bfx;1,t)}.$$
    We know from \cref{prop:G_E} that $G_\kappa^*(\bfx;t)=E^*_{\inc(\lambda,\phi)}(\bfx;1,t)$. Using \cref{thm:E_inc_E}, we get that 
    \begin{equation}\label{eq:G-F1}
      G_\kappa^*(\bfx;t)
      =h\cdotp E_\kappa^*(\bfx;1,t)
      =h\cdotp F_\kappa^*(\bfx;1,t),  
    \end{equation}
    for some symmetric function $h$.
    Now by \cref{prop:T_i-F_star}, for any
    $\eta \in S_n(\kappa)$, we have $F^*_\eta=T_\sigma \cdotp F_\kappa^*,$
    where $\sigma$ is the shortest permutation sending $\kappa$ to $\eta$. One then checks that we also have $G^*_\eta=T_\sigma \cdotp G_\kappa^*$ (see the proof of \cite[Lemma 4.12]{AyyerMartinWilliams2025} for more details).
    Now applying $T_\sigma$ to \cref{eq:G-F1} we get that 
    $ G_\eta^*(\bfx;t)=h\cdotp F_\eta^*(\bfx;1,t).$
    Summing this equation over all $\eta$, we get
    $$P_\lambda^*(\bfx;1,t)=h\cdotp P_\kappa^*(\bfx;1,t),$$
    where we have used \cref{eq:G} and \cref{prop:symmetrization}. We then get the desired claim
    \begin{equation*}
    \frac{P_\lambda^*(\bfx;1,t)}{P_\kappa^*(\bfx;1,t)}=h=\frac{G_\eta^*(\bfx;t)}{F_\eta^*(\bfx;1,t)}.\qedhere    
    \end{equation*}
\end{proof}

\section{\texorpdfstring{$t$}{t}-interpolation Schur polynomials}\label{app:t_int_Schur}
Throughout this section we fix a positive integer $n$,
and write 
$g(\bfx)=g(x_1,\dots,x_n)$.
Recall that the $t$-interpolation Schur polynomial $s_\lambda^*$ is the symmetric polynomial defined by
    $$s^*_\lambda(\bfx;t)=P_\lambda(\bfx;t,t).$$

We let $\SSYT(\lambda;n)$ denote the set of \emph{semistandard Young tableaux} of shape $\lambda$ filled with values in $\llbracket n\rrbracket$. For a cell $\Box=(i,j)$ in the Young diagram of $\lambda$ in  row $i$ and column $j$, we let $c(\Box)=j-i$ denote the \emph{content} of $\Box$.

With the specialization $q=t$, Okounkov's formula for interpolation Macdonald polynomials \cite[Equation (1.4)]{Okounkov1998} can be written as follows.

\begin{thm}[\cite{Okounkov1998}]\label{thm:Okounkov}
For any partition $\lambda=(\lambda_1,\dots,\lambda_n)$, we have
$$s^*_\lambda(\bfx;t)=\sum_{T\in \SSYT(\lambda)}\prod_{\Box\in \lambda}(x_{T(\Box)}-t^{c(\Box)+T(\Box)-n}).$$
\end{thm}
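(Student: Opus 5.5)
The plan is to verify that the right-hand side $R_\lambda(\bfx):=\sum_{T}\prod_{\Box}(x_{T(\Box)}-t^{c(\Box)+T(\Box)-n})$ satisfies the Knop--Sahi characterization of \cref{def:intMacdonald} at $q=t$, and then conclude $R_\lambda=P^*_\lambda(\bfx;t,t)=s^*_\lambda$ by uniqueness. Three things have to be checked: $R_\lambda$ is symmetric, its coefficient of $m_\lambda$ is $1$, and $R_\lambda(\overline{\nu})=0$ for every partition $\nu\neq\lambda$ with $|\nu|\le|\lambda|$.

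The normalization is immediate. $R_\lambda$ has degree $|\lambda|$, and its top homogeneous part is $\sum_T x^T=s_\lambda$, which equals $P_\lambda(\bfx;t,t)$. Hence $[m_\lambda]R_\lambda=1$, and the top component is the expected one.

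For vanishing, note that at $q=t$ and $\nu$ a partition we have $k_i(\nu)=n-i$, so $\overline{\nu}_r=t^{\nu_r+r-n}$. The factor attached to a box $\Box=(i,j)$ with entry $r=T(\Box)$ therefore vanishes exactly when $\nu_r=j-i=c(\Box)$. Since $|\nu|\le|\lambda|$ and $\nu\ne\lambda$, we have $\lambda\not\subseteq\nu$. I will show that every $T\in\SSYT(\lambda;n)$ contains such a box, adapting the Okounkov--Olshanski argument for shifted Schur functions, with the content replacing their shift. Take a box $(i,\nu_i+1)\in\lambda\setminus\nu$; the natural candidate is the first row $i$ with $\lambda_i>\nu_i$. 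Then walk along a lattice path through $T$, moving diagonally or horizontally according to whether $T(\Box)$ is smaller or larger than the index $r$ with $c(\Box)=\nu_r$. The row-weak and column-strict conditions, together with $\nu$ weakly decreasing, force the sign of $\nu_{T(\Box)}-c(\Box)$ to change along the path before it leaves the diagram. By discreteness, some box on the path must then satisfy $\nu_{T(\Box)}=c(\Box)$ exactly. Getting this monotonicity bookkeeping right is routine but fiddly.

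The main obstacle is symmetry, since the factors depend on $T(\Box)$ through the shift $t^{T(\Box)}$ and are not manifestly symmetric. First I will reduce to invariance under each $s_k$. Restricting to the entries $k,k+1$ reduces this to a statement about skew horizontal strips, i.e. a two-variable identity for one-row or skew-row pieces. I will then try a Bender--Knuth type involution, weighted to account for the fact that swapping $x_k\leftrightarrow x_{k+1}$ also shifts the constants by $t^{\pm1}$. The needed two-variable identity should be of the form $\sum_{a+b=m}\prod(x_k-t^{\cdot})\prod(x_{k+1}-t^{\cdot})$ being symmetric, which can be checked by a direct telescoping computation.

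If the involution is awkward, the fallback is different. Prove that $R_\lambda$ satisfies a branching rule $R_\lambda(x_1,\dots,x_n)=\sum_{\mu\prec\lambda}R_\mu(x_1,\dots,x_{n-1})\prod_{\Box\in\lambda/\mu}(x_n-t^{c(\Box)})$, obtained by peeling off the boxes containing $n$. Then compare with the known branching rule for interpolation polynomials, which yields symmetry indirectly.
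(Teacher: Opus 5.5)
\textbf{There is a genuine gap in the vanishing step.} You evaluate at the wrong point, and the combinatorial lemma you plan to prove is false.

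The formula $k_i(\nu)=n-i$ holds for weakly \emph{increasing} compositions, not for partitions. For a partition, $k_i(\nu)=\#\{j<i:\nu_j>\nu_i\}+\#\{j>i:\nu_j=\nu_i\}$. So at $q=t$ the point $\overline{\nu}$ is a permutation of $(t^{\nu_i+1-i})_i$, which in general is not $(t^{\nu_r+r-n})_r$.

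Concretely, take $n=2$, $\lambda=(1,1)$, $\nu=(2,0)$. The only tableau has $1$ above $2$, and neither box satisfies $\nu_{T(\Box)}=c(\Box)$, since $\nu_1=2\neq0$ and $\nu_2=0\neq-1$. Accordingly $R_{(1,1)}=(x_1-t^{-1})(x_2-t^{-1})$ is nonzero at your point $(t,1)$. It does vanish at the true point $\overline{\nu}=(t^2,t^{-1})$. No lattice-path bookkeeping can close the step as you formulated it.

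\textbf{The repair, once symmetry is known.} Evaluate at $\overline{\nu^{\rev}}$ instead; this is a permutation of $\overline{\nu}$ whose $r$th coordinate is $t^{\nu_{n+1-r}+r-n}$. The factor of a box with $T(\Box)=r$ then vanishes iff $\nu_{n+1-r}=c(\Box)$. The needed lemma uses only the first row:
\begin{itemize}
\item Choose $k$ with $\lambda_k>\nu_k$. Column $\nu_k+1$ contains the boxes in rows $1,\dots,k$, so $T(1,\nu_k+1)\le n+1-k$.
\item Put $g(j)=\nu_{n+1-T(1,j)}-(j-1)$. Then $g(1)\ge0$ and $g(\nu_k+1)\le\nu_k-\nu_k=0$.
\item Since $T$ weakly increases along the row, $g(j+1)\ge g(j)-1$. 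Hence $g(j)=0$ for some $j$, and that box kills the term.
\end{itemize}

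\textbf{The rest of your plan holds up.} In the symmetry step, each vertical domino ($k$ over $k+1$) carries the same constant $t^{c(\Box)+T(\Box)-n}$ in both cells, so it gives a symmetric factor. The free cells of row $i$, in columns $L+1,\dots,L+m$, contribute
\[
\sum_{a=0}^{m}\prod_{l\le a}(x_k-e_l)\prod_{a<l\le m}(x_{k+1}-e_{l+1}),\qquad e_l=t^{L+l-i+k-n}.
\]
This telescopes to $\bigl(\prod_{l=1}^{m+1}(x_k-e_l)-\prod_{l=1}^{m+1}(x_{k+1}-e_l)\bigr)/(x_k-x_{k+1})$, which is symmetric. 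Your appeal to uniqueness at the specialization $q=t$ also deserves a line of justification.

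With these fixes your argument is a self-contained proof. The paper does not prove the statement itself; it quotes Okounkov's general $(q,t)$ combinatorial formula and specializes it at $q=t$.
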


For $n\geq k\geq 0$ and $\ell\in \ZZ$, we define
$$e_{k,\ell}^*(\bfx;t):=\sum_{\begin{subarray}{c}S\subseteq \llbracket n \rrbracket\\  \#S=k\end{subarray}}\prod_{i\in S}\left(x_i-\frac{t^{\#S^c\cap \llbracket i-1\rrbracket}}{t^{\ell}}\right),$$
so that $e^*_k(\bfx;t)=e^*_{k,n-1}(\bfx;t)$
and also
\begin{equation}\label{eq:identity}
e^*_{k,n-2}(tx_1,\dots,tx_n;t) = t^k e^*_{k,n-1}(x_1,\dots,x_n;t).    
\end{equation}

We then have the following $t$-deformation of the dual Jacob–Trudi identity. It follows from \cref{thm:Okounkov}, together with the results of \cite{Macdonald1992} on the 6th variation of Schur functions (more precisely, with the notation \cite[Section6]{Macdonald1992}, we set $a_i=-t^{i-n}$).

\begin{prop}[\cite{Macdonald1992}]
    Let $\lambda=(\lambda_1,\dots,\lambda_{n})$ be a partition. We have
    \begin{equation}\label{eq:int_dual_JT}
      s^*_\lambda(\bfx;t)=\det\left(e^*_{\lambda'_i-i+j, n-j}(\bfx;t)\right)_{1\leq i,j\leq n}.  
    \end{equation}
\end{prop}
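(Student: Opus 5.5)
The plan is to derive \eqref{eq:int_dual_JT} from Macdonald's sixth variation of Schur functions. The bridge is Okounkov's tableau formula (\cref{thm:Okounkov}).

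Recall the setup of \cite[Section 6]{Macdonald1992}. Fix a doubly infinite sequence $a=(a_r)_{r\in\ZZ}$ and define the factorial Schur function
\[
s_\lambda(\bfx\,|\,a):=\sum_{T\in\SSYT(\lambda;n)}\prod_{\Box\in\lambda}\bigl(x_{T(\Box)}+a_{T(\Box)+c(\Box)}\bigr).
\]
Macdonald proves that these functions satisfy a dual Jacobi--Trudi identity. Its entries are factorial elementary functions $e_k(\bfx\,|\,\tau^{s}a)$, where $\tau$ is the shift $(\tau a)_r=a_{r+1}$ and the exponent $s$ depends on the column index $j$. In the factorial elementary function, the shift attached to the variable $x_i$ is governed by the number of unchosen indices before $i$, together with the column shift.

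\textbf{Step 1: match the tableau sums.} Set $a_r=-t^{r-n}$. Then the factor $x_{T(\Box)}+a_{T(\Box)+c(\Box)}$ becomes $x_{T(\Box)}-t^{c(\Box)+T(\Box)-n}$. Comparing with \cref{thm:Okounkov} gives
\[
s^*_\lambda(\bfx;t)=s_\lambda(\bfx\,|\,a).
\]

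\textbf{Step 2: identify the single-column functions.} Evaluate Okounkov's formula on a column of length $k$. A column tableau is a set $S=\{i_1<\dots<i_k\}$, with the box in row $r$ having content $1-r$. For $i_r\in S$, the exponent is therefore $i_r-r+1-n$. The quantity $i_r-r$ counts the elements of $S^c$ lying in $\llbracket i_r-1\rrbracket$. So the factor is $x_{i_r}-t^{\#S^c\cap\llbracket i_r-1\rrbracket}/t^{n-1}$, and the column function equals $e^*_{k,n-1}=e^*_k$.

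\textbf{Step 3: track the shifts.} Shifting $a$ by $\tau^{j-1}$ multiplies every constant term by $t^{j-1}$. This turns $t^{-(n-1)}$ into $t^{-(n-j)}$, so the factorial elementary function with the shifted sequence is $e^*_{k,n-j}$. The remaining task is to line up Macdonald's row and column indexing conventions. We expect the $(i,j)$ entry to be $e_{\lambda'_i-i+j}(\bfx\,|\,\tau^{j-1}a)$ or a transpose of it. A determinant is unchanged under transposition, so it suffices to check consistency on small cases.

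\textbf{Sanity checks.}
\begin{itemize}
\item For $\lambda=(1^k)$ the matrix is unitriangular after the first row, and the identity reduces to Step 2.
\item For $\lambda=(2^a,1^b)$ the identity should reproduce \cref{lem:2column}. This check also pins down the sign convention of the shift.
\end{itemize}

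\textbf{Main obstacle.} The only real difficulty is the bookkeeping in Step 3. Macdonald's sixth variation uses a particular convention for which index, row or column, carries the shift, and whether the shift is $\tau^{j-1}$ or $\tau^{1-j}$. If the conventions disagree, it may be necessary to substitute $a_r=-t^{r-n}$ with a reindexed sequence, or to apply the involution $\lambda\mapsto\lambda'$ to the $h$-type Jacobi--Trudi formula. We would resolve this by checking the $n=2$, $\lambda=(1,1)$ and $\lambda=(2,1)$ cases directly.
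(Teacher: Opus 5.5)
Your proposal is correct and follows the paper's argument exactly. Okounkov's formula at $q=t$ identifies $s^*_\lambda(\bfx;t)$ with Macdonald's factorial Schur function $s_\lambda(\bfx\,|\,a)$ for $a_r=-t^{r-n}$. Macdonald's dual Jacobi--Trudi determinant has $(i,j)$ entry $e_{\lambda'_i-i+j}(\bfx\,|\,\tau^{j-1}a)=e^*_{\lambda'_i-i+j,\,n-j}(\bfx;t)$, which is exactly your first guess as computed in Steps 2--3, so no reindexing is needed.

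One caveat, inherited from the statement itself: Macdonald's determinant has size $m\geq\lambda_1$, so the $n\times n$ form tacitly requires $\lambda_1\leq n$. For instance, it fails for $n=1$, $\lambda=(2)$. This is harmless for the $2\times 2$ determinant used in \cref{lem:2column}.
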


\begin{remark}
    Note that the top homogeneous part of \cref{eq:int_dual_JT} gives the dual Jacobi–Trudi identity (see \textit{e.g} \cite[Corollary 7.16.2]{Stanley:EC2}). As in the homogeneous case, this result can also be obtained using the Lindström–Gessel–Viennot lemma.
\end{remark}

If $\lambda$ is a 2-column partition, then \cref{eq:identity} and \cref{eq:int_dual_JT} give 
\cref{lem:2column}.

\end{appendices}

\bibliographystyle{amsalpha}
\bibliography{biblio.bib}
   
\end{document}